\numberwithin{equation}{section}
\newtheorem{theorem}{Theorem}[section]
\newtheorem{lemma}[theorem]{Lemma}
\newtheorem{proposition}[theorem]{Proposition}
\newtheorem*{question}{Question}
\theoremstyle{definition}
\newtheorem{definition}[theorem]{Definition}
\newtheorem*{acknowledgment}{Acknowledgments}
\newtheorem*{organization}{Organization}
\theoremstyle{remark}
\newtheorem{remark}[theorem]{Remark}
\numberwithin{equation}{section}
\newcommand{\Z}{\mathbb{Z}}
\newcommand{\R}{\mathbb{R}}
\newcommand{\C}{\mathbb{C}}
\newcommand{\F}{\mathbb{F}}
\newcommand{\CFK}{{\rm CFK}}
\newcommand{\HFK}{{\rm HFK}}
\begin{document}

\title[Hyperbolic knots whose upsilon invariants are convex]{New family of hyperbolic knots whose upsilon invariants are convex}

\author{Keisuke Himeno
}

\address{Graduate School of Advanced Science and Engineering, Hiroshima University,
1-3-1 Kagamiyama, Higashi-hiroshima, 7398526, Japan}
\email{himeno-keisuke@hiroshima-u.ac.jp}

\begin{abstract}
The  Upsilon invariant of a knot is a concordance invariant derived from knot Floer homology theory. It is a piecewise linear continuous function defined on the interval $[0,2]$. Borodzik and Hedden gave a question asking for which knots the Upsilon invariant is a convex function. It is known that 
the Upsilon invariant of any $L$--space knot, and a Floer thin knot after taking its mirror image, if necessary,  as well, is convex. Also, we can make infinitely many knots whose Upsilon invariants are convex by the connected sum operation.
 In this paper, we construct infinitely many mutually non-concordant hyperbolic knots whose Upsilon invariants are convex. To calculate the full knot Floer complex, we make use of a combinatorial method for  $(1,1)$--knots.
\end{abstract}

\renewcommand{\thefootnote}{}
\footnote{2020 {\it Mathematics Subject Classification.} Primary 57K10, 57K18; Secondary 57R58.

{\it Key words and phrases.} knot Floer homology, Upsilon invariant, $(1,1)$--knot}

\maketitle


\section{Introduction}\label{intro}
For a knot $K\subset S^3$, the {\it Upsilon invariant\/} $\Upsilon_K(t)\colon [0,2]\to\R$ is a concordance invariant introduced within the framework of knot Floer homology theory \cite{OSS}.  It has the following features:
\begin{itemize}
\item $\Upsilon_K(t)$ is continuous and piecewise liner;
\item $\Upsilon_K(t)=\Upsilon_K(2-t)$;
\item $\Upsilon_{-K}(t)=-\Upsilon_K(t)$ where $-K$ is the mirror image of $K$ with reversed orientation;
\item $\Upsilon_K(t)$ is additive under the connected sum operation, that is, $\Upsilon_{K\#L}(t)=\Upsilon_K(t)+\Upsilon_L(t)$;
\item for small $t$, $\Upsilon_K(t)=-\tau(K)\cdot t$ where $\tau(K)$ is the {\it tau invariant} of $K$ (see \cite{OStau} for the tau invariant).
\end{itemize}
It was originally defined using the $t$--modified knot Floer complex, but Livingston gave an interpretation on the {\it full knot Floer complex\/} $\CFK^{\infty}(K)$ \cite{Li}. This made it relatively easy to calculate the Upsilon invariant. 

In \cite{BH}, Borodzik and Hedden showed that the Upsilon invariant of any $L$--space knot is a convex function. In view of this, they gave the following question.

\begin{question}[Question 1.4 of \cite{BH}]
For which knots is $\Upsilon_K$ a convex function?
\end{question}

Clearly, $\Upsilon_{K_1\#K_2}$ is a convex function if both $\Upsilon_{K_1}$ and $\Upsilon_{K_2}$ are convex. Also, for an alternating knot, more generally, a Floer thin knot, the Upsilon invariant is given as $\Upsilon_K(t)=-\tau(K)(1-|1-t|)$ \cite{Al}. Therefore, this is convex after taking  an appropriate mirror image.  

The purpose of this paper is to give infinitely many mutually non-concordant hyperbolic knots which
provide new answers to Borodzik and Hedden's question.

 Let $n$ be a non-negative integer, and let $q\ge 4$ be an integer coprime to $3$. 
 Then $q=3k+1$ or $q=3k+2$ for some $k>0$. A knot $K_n^{(3,q)}$ is defined as in Figure \ref{knot3k+1} when $q=3k+1$, otherwise as in Figure \ref{knot3k+2}. Note that $K_0^{(3,q)}$ is the torus knot $T(3,q)$ of type $(3,q)$.

\begin{figure}[h]
\centering
\includegraphics[width=0.6\linewidth]{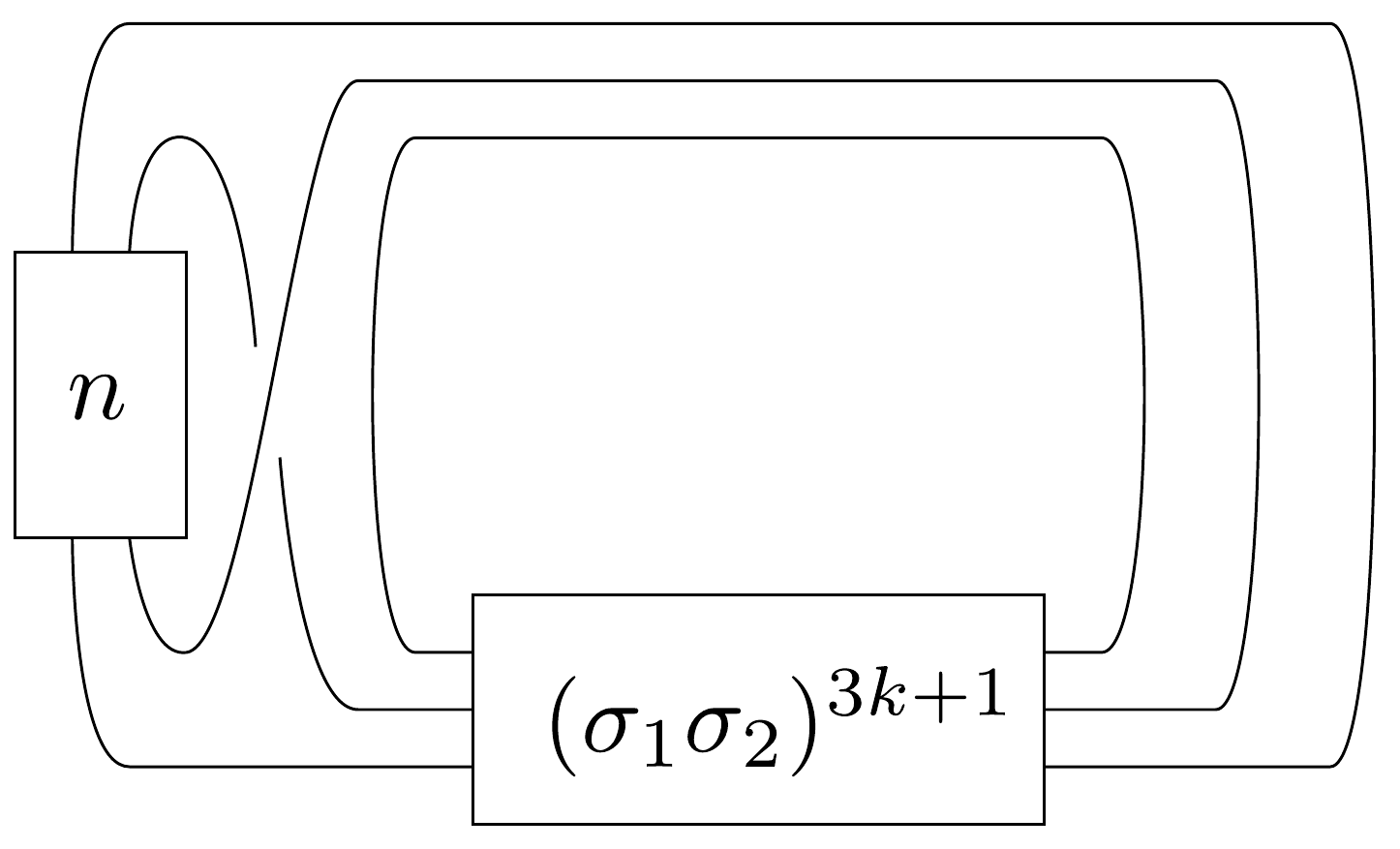}  
\caption{The case where $q=3k+1$. $\sigma_1$ and $\sigma_2$ are standard generators of the  $3$-braid group. The box with number $n$ indicates $n$ right handed vertical full-twists.}
\label{knot3k+1}
\end{figure}

\begin{figure}[h]
\centering
\includegraphics[width=0.6\linewidth]{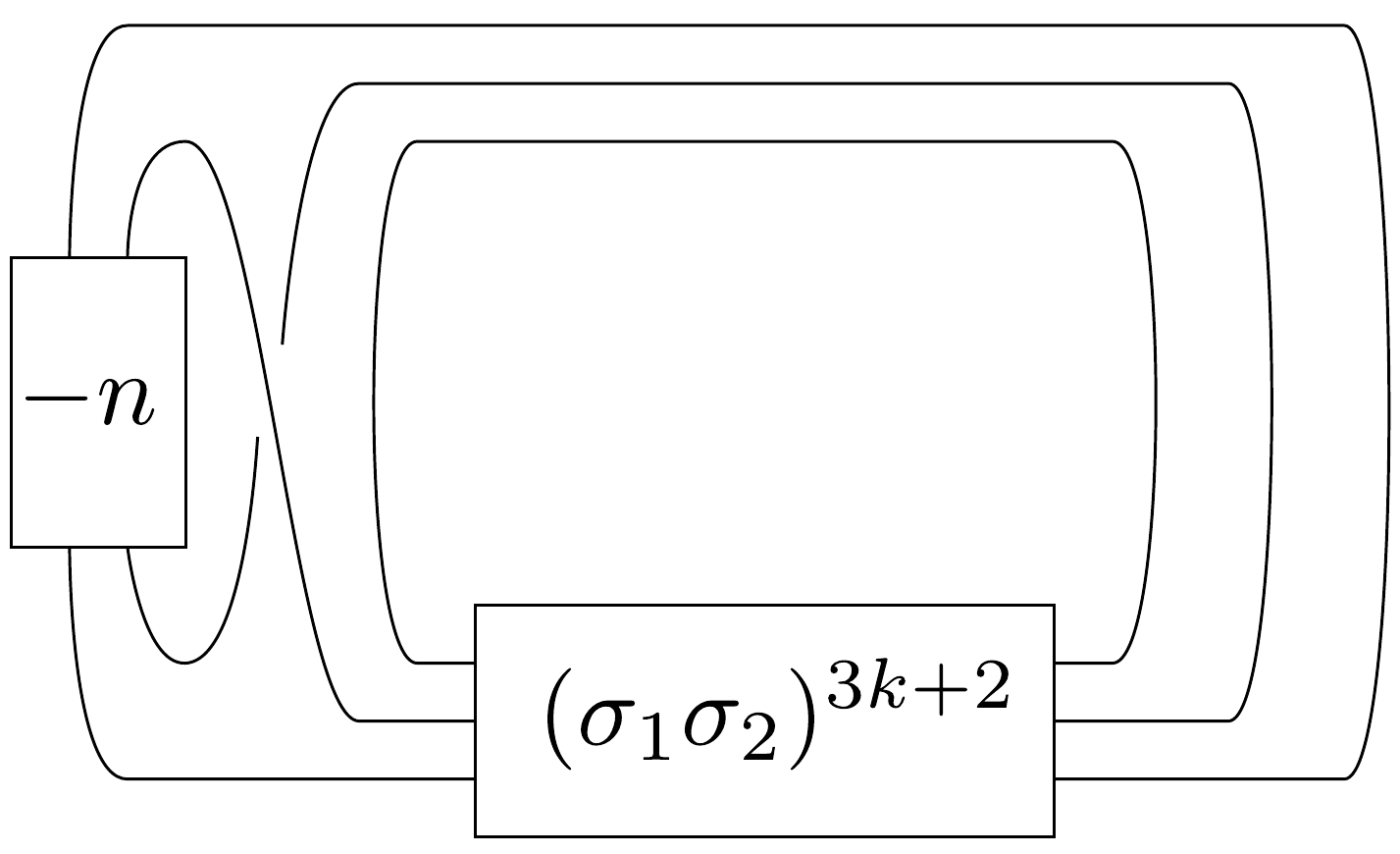}  
\caption{The case where $q=3k+2$. The box with number $-n$ indicates $n$ left handed vertical full-twists.}
\label{knot3k+2}
\end{figure}

\begin{theorem}\label{mainthm1}
For $n\ge1$, $K_n^{(3,q)}$ satisfies the following\textup{:}
\begin{enumerate}
\setlength{\itemsep}{2mm}
\item[\textup{(1)}] $\Upsilon_{K_n^{(3,q)}}$ is a convex function.
\item[\textup{(2)}] $K_n^{(3,q)}$ is a hyperbolic knot.
\item[\textup{(3)}] $K_n^{(3,q)}$ is neither an $L$--space knot nor a Floer thin knot.
\end{enumerate}
\end{theorem}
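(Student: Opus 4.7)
The overall plan is to exploit the fact that each $K_n^{(3,q)}$ is a $(1,1)$--knot (as tipped off by the abstract's reference to the combinatorial $(1,1)$ method) and to compute its full knot Floer complex $\CFK^\infty(K_n^{(3,q)})$ explicitly. Assertions (1) and (3) can then be extracted directly from this complex, while (2) is best handled topologically by viewing $K_n^{(3,q)}$ as a Dehn surgery and invoking Thurston's hyperbolic Dehn surgery theorem.

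For (1), once $\CFK^\infty$ is in hand I would invoke Livingston's reformulation \cite{Li}, which expresses $\Upsilon_K(t)$ as an extremum of the linear functional $(i,j)\mapsto -2\bigl((1-t/2)i+(t/2)j\bigr)$ over the bi-filtration coordinates of generators representing appropriate homology classes. This produces an explicit piecewise linear function whose breakpoints I would catalog and whose slopes I would verify are non-decreasing on $[0,1]$; the symmetry $\Upsilon_K(t)=\Upsilon_K(2-t)$ then handles $[1,2]$ and leaves only a sign check at $t=1$. A useful sanity check is that $n=0$ gives the $L$--space knot $T(3,q)$, whose $\Upsilon$ is known to be convex, so I would expect the effect of the $n$ additional full twists to be a controlled piecewise linear perturbation of that staircase-type function. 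For (3), the same computation also yields $\widehat{\HFK}(K_n^{(3,q)})$ as the associated graded object of $\CFK^\infty$; to rule out being an $L$--space knot it suffices to exhibit an Alexander grading in which $\widehat{\HFK}$ has rank at least two, and to rule out being Floer thin it suffices to exhibit two generators lying on distinct $\delta=M-A$ diagonals. Both should be immediate from the generator list produced by the $(1,1)$--computation.

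For (2), I would observe that $K_n^{(3,q)}$ is obtained from $T(3,q)$ by $\pm 1/n$ Dehn surgery (depending on the sign convention of the twist) on an unknotted circle $C$ encircling the three strands being twisted in Figures \ref{knot3k+1} and \ref{knot3k+2}. By Thurston's hyperbolic Dehn surgery theorem, $K_n^{(3,q)}$ is hyperbolic for all sufficiently large $n$ provided the two--component link $T(3,q)\cup C$ is hyperbolic in $S^3$. The latter can be verified by ruling out essential spheres, tori, and annuli in the link complement, the key point being that the extra component $C$ destroys the Seifert fibered structure of the $T(3,q)$ exterior; the finitely many remaining small $n$ are then checked individually by an explicit geometric computation such as SnapPy together with a rigorous certification.

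The main obstacle is part (1): although the generator count of the $(1,1)$--complex and the Alexander--Maslov bigrading can in principle be written down in closed form, convexity is a global inequality between the slopes of $\Upsilon$ at successive breakpoints, and proving it requires carefully tracking which generators achieve the extremum in Livingston's formula as $t$ sweeps $[0,1]$. Since the twist parameter $n$ shifts some generators while leaving the underlying torus-knot ``staircase'' skeleton in place, I expect the crucial step to be isolating a uniform pattern valid for all $n\ge 1$ and verifying convexity from a closed form rather than by case analysis on $n$.
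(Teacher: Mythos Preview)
Your plan for part~(3) matches the paper's, and your overall strategy of computing $\CFK^\infty(K_n^{(3,q)})$ via the $(1,1)$--diagram is exactly what the paper does. But on parts~(1) and~(2) you diverge in ways worth noting.

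For part~(1), you are working harder than necessary, and your expectation that the twists produce a ``controlled piecewise linear perturbation'' of $\Upsilon_{T(3,q)}$ is not quite right. What the paper's computation actually reveals is that $\CFK^\infty(K_n^{(3,q)})$ decomposes as the staircase complex of $T(3,q)$ direct-summed with $n(2k+1)$ acyclic box complexes. Consequently $\CFK^\infty(K_n^{(3,q)})$ is \emph{stably equivalent} to $\CFK^\infty(T(3,q))$, and since $\Upsilon$ is a stable-equivalence invariant (Proposition~\ref{stablyequivalentupsilon}), one gets $\Upsilon_{K_n^{(3,q)}}=\Upsilon_{T(3,q)}$ on the nose. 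Convexity is then immediate from Borodzik--Hedden, with no need to track extremizers of Livingston's functional or catalog breakpoints. Your approach would eventually succeed, but only after you discovered that all the ``extra'' generators sit in acyclic summands and contribute nothing.

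For part~(2), your argument has a genuine gap. Thurston's hyperbolic Dehn surgery theorem gives hyperbolicity only for $n$ outside a finite exceptional set, and that set depends on $q$; since the statement ranges over all $q\ge 4$ coprime to $3$, you cannot dispose of the remaining cases by a finite SnapPy check. You would need either a uniform bound on the exceptional slopes as $q$ varies or a separate argument for small $n$, neither of which you have supplied. The paper avoids this entirely with a short topological argument: $K_n^{(3,q)}$ is a $(1,1)$--knot and hence prime; it visibly has a $3$--bridge presentation, so by Schubert's theorem it cannot be a satellite knot; and its Alexander polynomial (read off from $\widehat{\HFK}$) rules out it being a torus knot. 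Hyperbolicity follows for every $n\ge 1$ and every $q$ at once.
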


Two knots $K_n^{(3,q)}$ and $K_m^{(3,q')}$ are not equivalent when $n\ne m$ or $q\ne q'$ (this can be seen from their Alexander polynomials, see Lemma \ref{AlexanderPoly}). 

Since the Upsilon invariant is a concordance invariant, if $K$ is concordant to a knot whose Upsilon invariant is convex, $\Upsilon_K$ is also convex. Hence, it makes sense to construct mutually non-concordant knots.

\begin{theorem}\label{mainthm2}
For a fixed integer $q$, the family $\{K_n^{(3,q)}\}_{n=0}^{\infty}$ contains infinitely many mutually non-concordant knots. 
\end{theorem}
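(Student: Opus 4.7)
The plan is to leverage the fact that the Upsilon invariant is itself a concordance invariant: if two knots are concordant their Upsilon functions must agree on $[0,2]$. Hence it suffices to extract from $\Upsilon_{K_n^{(3,q)}}$ a single numerical invariant that depends injectively on $n$.

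The most natural candidate is the tau invariant, which equals the negative of the initial slope of $\Upsilon$ at $t=0$. The calculation of the full knot Floer complex $\CFK^{\infty}(K_n^{(3,q)})$ via the combinatorial $(1,1)$--diagram technique used to establish Theorem \ref{mainthm1} already produces $\tau(K_n^{(3,q)})$ as an immediate byproduct: one simply reads off the Alexander grading of the distinguished generator of the vertical complex. I expect the resulting formula to be linear in $n$, with the sign of the coefficient dictated by the handedness of the twist region. For $q=3k+1$ the added full twists are right-handed, so $\tau(K_n^{(3,q)})$ should be strictly increasing in $n$; for $q=3k+2$ the twists are left-handed, so $\tau(K_n^{(3,q)})$ should be strictly decreasing in $n$. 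In either case the map $n \mapsto \tau(K_n^{(3,q)})$ is injective.

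Given this, the theorem follows in one line: if $K_n^{(3,q)}$ and $K_m^{(3,q)}$ were concordant, then $\Upsilon_{K_n^{(3,q)}} \equiv \Upsilon_{K_m^{(3,q)}}$, hence $\tau(K_n^{(3,q)}) = \tau(K_m^{(3,q)})$, forcing $n=m$.

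The main obstacle, if any, is confirming that the formula for $\tau(K_n^{(3,q)})$ that drops out of the $(1,1)$--diagram computation is genuinely monotone in $n$ and not, say, eventually constant due to cancellations in the complex. If $\tau$ turns out to be insufficient (for instance if the twist region contributes in a way that stabilizes $\tau$), a safe fallback is to use the concordance invariant $\Upsilon_{K_n^{(3,q)}}(t_0)$ at some fixed interior point $t_0 \in (0,2)$ where the piecewise-linear formula established for $\Upsilon_{K_n^{(3,q)}}$ in Theorem \ref{mainthm1} visibly depends on $n$; one then shows that this sequence takes infinitely many distinct real values. Either way, the substantive work is already contained in the Upsilon computation, and only a monotonicity observation remains.
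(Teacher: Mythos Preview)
Your approach has a fundamental gap: the very computation that underlies Theorem~\ref{mainthm1}(1) shows that the full knot Floer complex $\CFK^{\infty}(K_n^{(3,q)})$ is \emph{stably equivalent} to $\CFK^{\infty}(T(3,q))$ for every $n$ (this is Lemma~\ref{stablyequivalent}). Consequently $\Upsilon_{K_n^{(3,q)}}=\Upsilon_{T(3,q)}$ and $\tau(K_n^{(3,q)})=\tau(T(3,q))=q-1$ for all $n$. Neither $\tau$ nor any value $\Upsilon_{K_n^{(3,q)}}(t_0)$ depends on $n$; your proposed invariant is constant on the family, and both the main line and the fallback collapse. This is not an accident: the whole point of the construction is to produce many distinct knots sharing the same (convex) Upsilon function, so no invariant that factors through the stable equivalence class of $\CFK^{\infty}$ can separate them.

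The paper instead uses a classical obstruction that is \emph{not} determined by the stable equivalence class: the Fox--Milnor condition that $\det(K)\det(K')$ must be a perfect square whenever $K$ and $K'$ are concordant. From the Alexander polynomial (Lemma~\ref{AlexanderPoly}) one computes $\det(K_n^{(3,q)})=4n+1$ or $4n+3$ depending on the parity of $k$, and Dirichlet's theorem on primes in arithmetic progressions then yields infinitely many $n$ for which these determinants are distinct primes, hence pairwise non-square products. This gives infinitely many mutually non-concordant knots in the family, though not that \emph{all} pairs $n\ne m$ are non-concordant (the paper leaves that open).
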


 \begin{organization}
The paper is organized as follows. In Section \ref{fullknotFloercomplex}, we review the properties of the full knot Floer complex and give the definition of stable equivalence. In Section \ref{ProofThm1}, we admit the calculation of the full knot Floer complex of our knots and then prove Theorem \ref{mainthm1}, and in Section \ref{ProofThm2}, we discuss the concordance relation among our knots to prove Theorem \ref{mainthm2}. In Section \ref{(1,1)diagram}, we give a $(1,1)$--diagram of $K^{(3,3k+1)}_n$, and in Section \ref{calCFK3k+1}, we calculate the full knot Floer complex of $K^{(3,3k+1)}_n$ from its $(1,1)$--diagram. Lastly, in Section \ref{calCFK3k+2}, we give the full knot Floer complex of $K_n^{(3,3k+2)}$ in the same manner as in Sections \ref{(1,1)diagram} and \ref{calCFK3k+1}.
 \end{organization}

 \begin{acknowledgment}
The author would like to thank Masakazu Teragaito for his thoughtful guidance and helpful discussions about this work. This work was supported by Japan Science and Technology Agency (JST), the establishment of university fellowships towards the creation of science technology innovation, Grant Number JPMJFS2129.
\end{acknowledgment}
\section{The full knot Floer complex $\CFK^{\infty}(K)$}\label{fullknotFloercomplex}
We give a brief overview of the full knot Floer complex. For more details, see \cite{Ho,Ma,OS}. 

For a knot $K\subset S^3$, we can define the {\it full knot Floer complex\/} $\CFK^{\infty}(K)$, which has the following structure:

\begin{itemize}
\item $\CFK^{\infty}(K)$ is a finitely generated $\F_2[U,U^{-1}]$--module, where $\F_2$ is the finite field of two elements and $U$ is a formal variable. In Ozsv\'ath and Szab\'o's notation \cite{OS}, as an $\F_2$--module, $\CFK^{\infty}$ is generated by elements of the form $[x,i,j]$ equipped with the {\it Alexander grading} $A(x)=j-i\in\Z$. Moreover, $(i,j)$ grading induces the $\Z\oplus\Z$--filtration structure.
\item As a chain complex, $\CFK^{\infty}(K)$ has an integer valued homological grading, 
called the {\it Maslov grading}, and the boundary map $\partial$.
\item The action of $U$ on $\CFK^{\infty}(K)$ commutes with $\partial$, 
lowers Maslov grading by $2$, 
and lowers $(i,j)$ grading by $1$, that is, $U[x,i,j]=[x,i-1,j-1]$.
\end{itemize}

We can draw a structure of $\CFK^{\infty}(K)$ on a plane; assign a generator $[x,i,j]$ to the point which has coordinates $(i,j)$ on the plane, and the differentials are drawn by arrows (it can be represented by a line segment instead of an arrow, since the differential always lowers filtration levels). For example, see Figure \ref{CFK-T3,4}. 

\begin{figure}[h]
\centering
\includegraphics[scale=0.8]{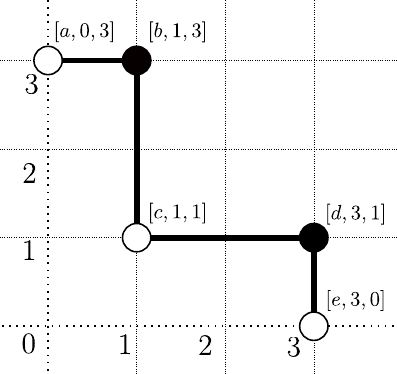}
\caption{The full knot Floer complex $\CFK^{\infty}(T(3,4))$ of $T(3,4)$. As an $\F_2[U,U^{-1}]$--module, there are five generators (vertices). Actually, by the action of $U$, complexes of the same shape are lined up in the upper right and lower left. However, we draw only the complex which has generators with Maslov grading $0$ (white vertices). Also, the differentials are represented by line segments, for example, $\partial[b,1,3]=[a,0,3]+[c,1,1]$.}
\label{CFK-T3,4}
\end{figure}

To prove (1) of Theorem \ref{mainthm1}, we introduce an equivalent relation among full knot Floer complexes.

\begin{definition}
Two full knot Floer complexes $C_1$ and $C_2$ are said to be {\it stably equivalent\/} if there are graded, $\Z\oplus\Z$--filtered, acyclic chain complexes $A_1$, $A_2$ such that $C_1\oplus A_1$ and $C_2\oplus A_2$ are filtered chain homotopy equivalent. 
\end{definition}

In fact,  an acyclic complex appeared in this paper is a {\it box complex\/}, see Figure \ref{boxcomplex}.

\begin{figure}[h]
\centering
\includegraphics[scale=0.4]{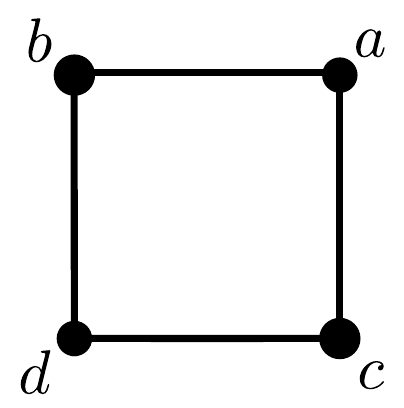}
\caption{A box complex. The homological cycle is $d$ or $b+c$. But both are boundary cycles, so this complex is acyclic.}
\label{boxcomplex}
\end{figure}

By the manner of calculating $\Upsilon_K$ in \cite{Li}, we can see that $\Upsilon_K$ is an invariant on stably equivalence classes:

\begin{proposition}[\cite{Ho,Li}]\label{stablyequivalentupsilon}
If $\CFK^{\infty}(K_1)$ and $\CFK^{\infty}(K_2)$ are stably equivalent, then $\Upsilon_{K_1}=\Upsilon_{K_2}$.
\end{proposition}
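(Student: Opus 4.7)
The plan is to reduce the claim to Livingston's filtration-theoretic formulation of $\Upsilon_K(t)$ from \cite{Li}, and then check that this formulation is unaffected by the two operations allowed in stable equivalence: filtered chain homotopy equivalence, and direct summing with a $\Z\oplus\Z$--filtered acyclic complex.

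First I would recall Livingston's description. For each $t\in[0,2]$, weight the $\Z\oplus\Z$--filtration on a finitely generated filtered complex $C$ by the line of slope $-t/(2-t)$, i.e.\ set
\[
\mathcal{F}_{t,s}(C)=\mathrm{span}\bigl\{[x,i,j]\ :\ (1-t/2)\,i+(t/2)\,j\le s\bigr\}.
\]
This is a subcomplex, and $\Upsilon$ is then read off as $-2$ times the minimum $s$ for which the inclusion induced map $H_*(\mathcal{F}_{t,s}(C))\to H_*(C)$ hits the distinguished generator of $H_*(C)$ corresponding to the $\F_2$--summand in grading $0$. This minimum depends only on the filtered chain homotopy equivalence class of $C$, since a filtered homotopy equivalence restricts to a quasi-isomorphism of each $\mathcal{F}_{t,s}$ and so identifies the two sub-images inside $H_*(C)$.

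The main step is to show that if $A$ is a $\Z\oplus\Z$--filtered acyclic complex, then $C$ and $C\oplus A$ give the same $\Upsilon$. The key point is that the filtration on the direct sum splits: an element $x_C+x_A$ lies in $\mathcal{F}_{t,s}(C\oplus A)$ if and only if $x_C\in \mathcal{F}_{t,s}(C)$ and $x_A\in\mathcal{F}_{t,s}(A)$. Hence for any cycle $z=z_C+z_A\in\mathcal{F}_{t,s}(C\oplus A)$, the component $z_C$ is itself a cycle in $\mathcal{F}_{t,s}(C)$, and since $A$ is acyclic the class $[z_A]$ vanishes in $H_*(C\oplus A)$, so that $[z]=[z_C]$. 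This shows that the image of $H_*(\mathcal{F}_{t,s}(C\oplus A))$ inside $H_*(C\oplus A)\cong H_*(C)$ coincides with the image of $H_*(\mathcal{F}_{t,s}(C))$ inside $H_*(C)$; consequently the two minima defining $\Upsilon$ agree.

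Combining these two observations gives the proposition: if $\CFK^\infty(K_1)\oplus A_1$ and $\CFK^\infty(K_2)\oplus A_2$ are filtered chain homotopy equivalent with each $A_i$ acyclic, then
\[
\Upsilon_{K_1}(t)=\Upsilon_{\CFK^\infty(K_1)\oplus A_1}(t)=\Upsilon_{\CFK^\infty(K_2)\oplus A_2}(t)=\Upsilon_{K_2}(t).
\]
The only subtle point is verifying the splitting of the filtration on the direct sum and the fact that $A$--cycles bound in $A$ (hence in the total complex); this is what ``acyclic'' really buys us, and it is the piece of the argument most worth writing out carefully.
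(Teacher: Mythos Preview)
The paper does not actually supply a proof of this proposition: it is stated with the citation \cite{Ho,Li} and preceded only by the remark that ``by the manner of calculating $\Upsilon_K$ in \cite{Li}, we can see that $\Upsilon_K$ is an invariant on stably equivalence classes.'' Your write-up is a correct and complete expansion of exactly that remark---you use Livingston's filtration-level description of $\Upsilon$ and verify invariance under filtered homotopy equivalence and under adding an acyclic summand---so there is nothing to compare beyond noting that you have filled in the details the paper leaves to the references.
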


In fact,  Hom  \cite{Ho} shows that if $K_1$ and $K_2$ are (smoothly) concordant, then $\CFK^{\infty}(K_1)$ and $\CFK^{\infty}(K_2)$ are stably equivalent. Thus the Upsilon invariant (and others such as tau invariant and epsilon invariant) being a concordance invariant can be seen as a corollary of this theorem and Proposition \ref{stablyequivalentupsilon}.

\section{Proof of Theorem \ref{mainthm1}}\label{ProofThm1}

In this section, we prove Theorem \ref{mainthm1}, admitting the calculation $\CFK^{\infty}(K_n^{(3,q)})$. Figures \ref{CFK-K_n3,3k+1} and \ref{CFK-K_n3,3k+2} show $\CFK^{\infty}(K_n^{(3,3k+1)})$ and $\CFK^{\infty}(K_n^{(3,3k+2)})$, respectively.
The calculation will be done in Sections \ref{calCFK3k+1} and \ref{calCFK3k+2}. 

\begin{figure}[h]
\centering
\includegraphics[scale=0.4]{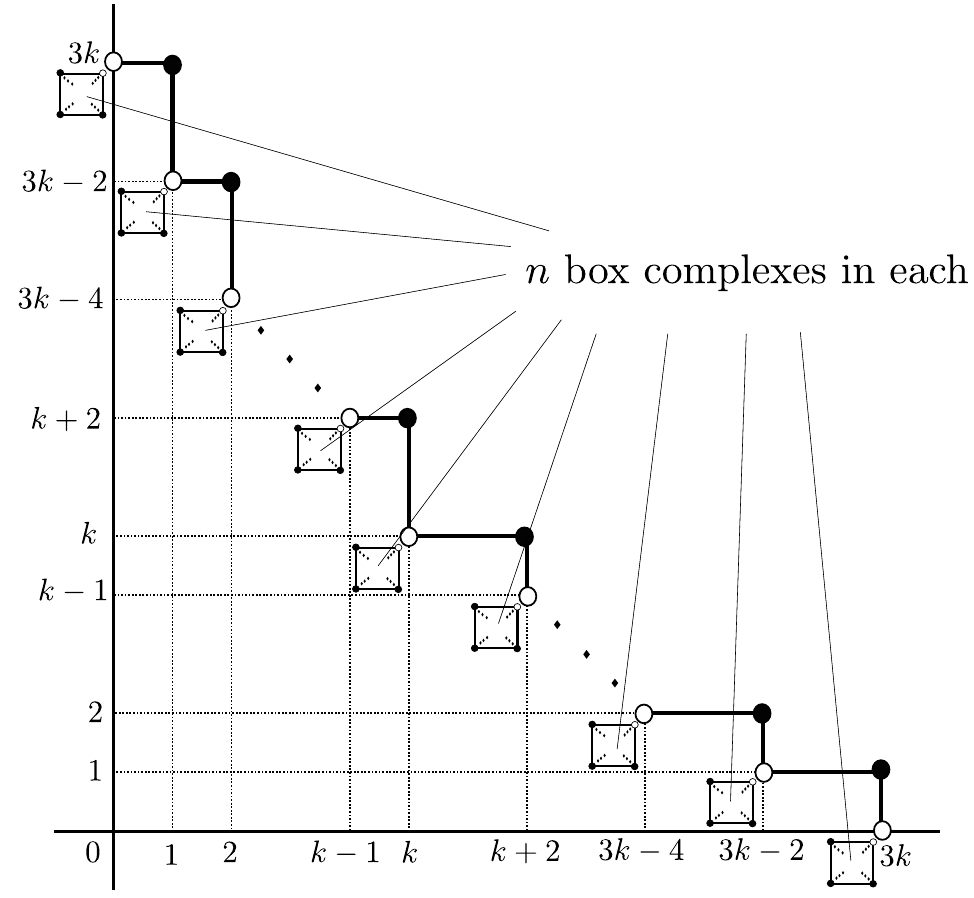}
\caption{The full knot Floer complex $\CFK^{\infty}(K_n^{(3,3k+1)})$. This complex consists of the staircase complex, which is consistent with the complex of $\CFK^{\infty}(T(3,3k+1))$, and box complexes. The vertices of box complexes are actually on the grid, but are drawn slightly displaced.}
\label{CFK-K_n3,3k+1}
\end{figure}

\begin{figure}[h]
\centering
\includegraphics[scale=0.4]{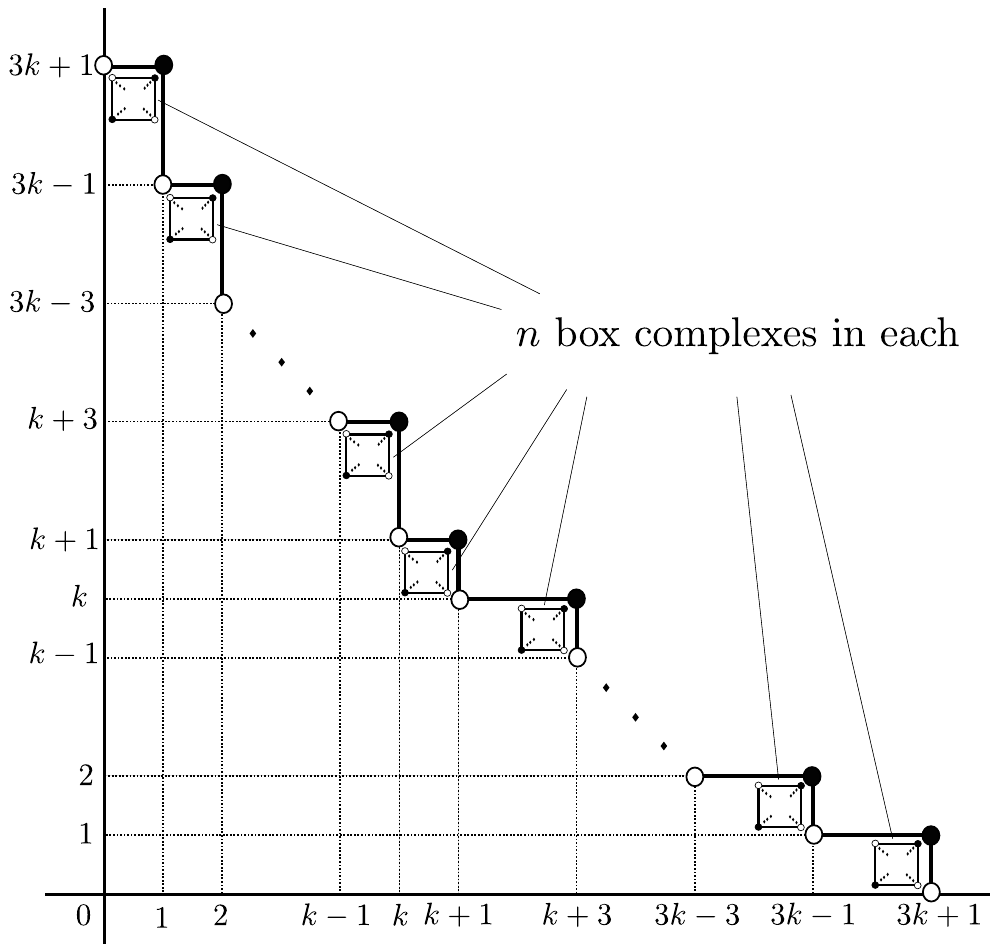}
\caption{The full knot Floer complex $\CFK^{\infty}(K_n^{(3,3k+2)})$. The staircase complex is consistent with $\CFK^{\infty}(T(3,3k+2))$}
\label{CFK-K_n3,3k+2}
\end{figure}

\begin{lemma}\label{stablyequivalent}
$\CFK^{\infty}(K_n^{(3,q)})$ is stably equivalent to $\CFK^{\infty}(T(3,q))$.
\end{lemma}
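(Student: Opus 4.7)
My plan is to read the lemma directly off the structure of $\CFK^{\infty}(K_n^{(3,q)})$ displayed in Figures \ref{CFK-K_n3,3k+1} and \ref{CFK-K_n3,3k+2}, granting the computation carried out later in Sections \ref{calCFK3k+1} and \ref{calCFK3k+2}. Both figures exhibit the complex as a staircase piece together with several box pieces, and the argument has two ingredients: verifying that this is a genuine direct-sum decomposition of graded, $\Z\oplus\Z$-filtered chain complexes, and identifying the two summands correctly.

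First I would check that, as a graded, $\Z\oplus\Z$-filtered chain complex, $\CFK^{\infty}(K_n^{(3,q)})$ splits as $S \oplus B$, where $S$ denotes the staircase summand and $B$ the union of the box summands. Inspection of the figures shows that no differential has a component connecting a staircase generator to a box generator or vice versa, so this is a bona fide decomposition of filtered complexes. I would then identify $S$ with $\CFK^{\infty}(T(3,q))$: the shape, Maslov gradings and filtration levels of the staircase in each figure coincide with the well-known staircase model for $\CFK^{\infty}$ of $T(3,q)$ (illustrated for $q=4$ in Figure \ref{CFK-T3,4}), yielding a filtered, graded isomorphism $S \cong \CFK^{\infty}(T(3,q))$.

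Each box summand is acyclic by the remark accompanying Figure \ref{boxcomplex}, so $B$ is a graded, $\Z\oplus\Z$-filtered acyclic complex. Taking $A_1 = 0$ and $A_2 = B$ in the definition of stable equivalence then yields
\[
\CFK^{\infty}(K_n^{(3,q)}) \oplus A_1 \;=\; S \oplus B \;\cong\; \CFK^{\infty}(T(3,q)) \oplus A_2,
\]
so the two complexes are stably equivalent, as required.

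The only nontrivial content in this argument is the decomposition statement itself. Verifying that the ``cross'' components of the differential vanish requires having the explicit full knot Floer complex in hand, so the genuine work is the $(1,1)$-diagram computation deferred to Sections \ref{calCFK3k+1} and \ref{calCFK3k+2}; once those pictures are available, the lemma reduces to a direct observation.
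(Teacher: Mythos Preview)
Your proposal is correct and follows essentially the same approach as the paper: both read the decomposition $\CFK^{\infty}(K_n^{(3,q)}) \cong S \oplus B$ directly off Figures~\ref{CFK-K_n3,3k+1} and~\ref{CFK-K_n3,3k+2}, identify the staircase $S$ with $\CFK^{\infty}(T(3,q))$, and observe that the box summands are acyclic. The only minor difference is that the paper justifies the identification $S \cong \CFK^{\infty}(T(3,q))$ by invoking the fact that positive torus knots are $L$--space knots whose full complex is determined by the Alexander polynomial (citing \cite{BL}), whereas you appeal to the staircase model as ``well known'' and point to Figure~\ref{CFK-T3,4}; this is a difference of citation, not of substance.
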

\begin{proof}
Since the positive torus knot is an $L$--space knot, its full knot Floer complex can be determined by the Alexander polynomial (see  Section 4.1 of \cite{BL}). According to this, we have $\CFK^{\infty}(T(3,q))$ as the staircase complex as shown in Figures \ref{CFK-K_n3,3k+1} or \ref{CFK-K_n3,3k+2}. Therefore, $\CFK^{\infty}(K_n^{(3,q)})$ is stably equivalent to $\CFK^{\infty}(T(3,q))$.
\end{proof}

\begin{proof}[Proof of (1) in Theorem \ref{mainthm1}]
By Proposition \ref{stablyequivalentupsilon} and Lemma \ref{stablyequivalent}, $\Upsilon_{K_n^{(3,q)}}=\Upsilon_{T(3,q)}$. Since $T(3,q)$ is an $L$--space knot, $\Upsilon_{T(3,q)}$ is a convex function (\cite{BH}), so is $\Upsilon_{K_n^{(3,q)}}$.
\end{proof}

To prove (2) and (3) of Theorem \ref{mainthm1}, it is useful to consider the {\it hat version knot Floer homology}. For a knot $K$, the hat version knot Floer homology $\widehat{HFK}_d(K;i)\ (d,i\in\Z)$ is a bi-graded Abelian group. It can be easily obtained from $\CFK^{\infty}(K)$ (for example, \cite{OS},\cite{Ma}). By $\CFK^{\infty}(K_n^{(3,q)})$ as shown in Figures \ref{CFK-K_n3,3k+1} and \ref{CFK-K_n3,3k+2}, we have the following lemma:

\begin{lemma}\label{hatversion}
The rank of \textup{(}hat version\textup{)} knot Floer homology of $K_n^{(3,q)}$ is as follows\textup{:}
\[
{\rm rank}\ \widehat{\HFK}_d(K_n^{(3,3k+1)};i)=\left\{
\begin{array}{lll}
n & (d,i)=(1-2j,3k+1-3j) & (j=0,\ldots,k)\\
n & (d,i)=(-2k-1-4j,-1-3j) & (j=0,\ldots,k)\\
2n+1 & (d,i)=(-2j,3k-3j) & (j=0,\ldots,k)\\
2n+1 & (d,i)=(-2k-4-4j,-3-3j) & (j=0,\ldots,k-1)\\
n+1 & (d,i)=(-1-2j,3k-1-3j) & (j=0,\ldots,k-1)\\
n+1 & (d,i)=(-2k-3-4j,-2-3j) & (j=0,\ldots,k-1)\\
0 & \text{otherwise},
\end{array}
\right.
\]
\[
{\rm rank}\ \widehat{\HFK}_d(K_n^{(3,3k+2)};i)=\left\{
\begin{array}{lll}
n+1 & (d,i)=(-2j,3k+1-3j) & (j=0,\ldots,k)\\
n+1 & (d,i)=(-2k-2-4j,-1-3j) & (j=0,\ldots,k)\\
2n+1 & (d,i)=(-1-2j,3k-3j) & (j=0,\ldots,k)\\
2n+1 & (d,i)=(-2k-5-4j,-3-3j) & (j=0,\ldots,k-1)\\
n & (d,i)=(-2-2j,3k-1-3j) & (j=0,\ldots,k-1)\\
n & (d,i)=(-2k-4-4j,-2-3j) & (j=0,\ldots,k-1)\\
0 & \text{otherwise}. &
\end{array}
\right.
\]
\end{lemma}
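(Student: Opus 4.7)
The plan is to derive $\widehat{HFK}$ directly from the explicit description of $\CFK^{\infty}$ provided in Figures \ref{CFK-K_n3,3k+1} and \ref{CFK-K_n3,3k+2}. Recall that $\widehat{HFK}_d(K;s)$ is the homology at Maslov grading $d$ of the Alexander-graded slice $C\{i=0,\,j=s\}$ of the subquotient $C\{i=0\}$. In our complex every differential --- whether a staircase edge or a side of a box complex --- strictly decreases at least one of the two filtration indices $(i,j)$, so the induced differential on each slice $C\{i=0,\,j=s\}$ vanishes. Consequently the rank of $\widehat{HFK}_d(K_n^{(3,q)};s)$ equals the number of generators of $\CFK^{\infty}$ that lie at bigrading $(d,s)$ once the $U$-action is used to normalise to the column $i=0$. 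Concretely, a generator $[x,i_0,j_0]$ of Maslov grading $d_0$ contributes one generator at bigrading $(d_0-2i_0,\,j_0-i_0)$, since $U^{i_0}\cdot[x,i_0,j_0]=[x,0,j_0-i_0]$ has Maslov grading $d_0-2i_0$ and Alexander grading $j_0-i_0$.

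With this reduction the proof becomes a tabulation. First I would list the corner generators of the staircase part of $\CFK^{\infty}(K_n^{(3,q)})$, which coincides with $\CFK^{\infty}(T(3,q))$, read off their $(d_0,i_0,j_0)$ from the standard $L$-space-knot description, and verify that they account exactly for the $n=0$ specialisation of the stated ranks. Next I would read off the coordinates of the four corners of each box complex indicated in the figures, convert each to a bigrading $(d_0-2i_0,\,j_0-i_0)$, and sum multiplicities. Each box contributes one generator at each of four bigradings clustered around a common Alexander level; summed over all box placements described in the figures, these multiplicities produce the coefficients $n$, $n+1$, and $2n+1$ in the lemma --- the $2n$-enhancement above the staircase corner rank $1$ arising at bigradings shared by two neighbouring boxes, and the $n$-only terms coming from box corners that lie outside the support of the staircase.

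The cases $q=3k+1$ and $q=3k+2$ are handled in parallel, with the direction of the full twists (right-handed in Figure \ref{knot3k+1} versus left-handed in Figure \ref{knot3k+2}) reflecting the Maslov-grading pattern accordingly. The principal obstacle is the careful case-wise bookkeeping of where each box sits relative to the staircase, so that every bigrading appearing in the six rank families of the lemma is accounted for with the correct multiplicity.
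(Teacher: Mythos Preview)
Your proposal is correct and matches the paper's approach: the paper states Lemma~\ref{hatversion} immediately after displaying $\CFK^{\infty}(K_n^{(3,q)})$ in Figures~\ref{CFK-K_n3,3k+1} and~\ref{CFK-K_n3,3k+2}, with the only justification being the sentence ``By $\CFK^{\infty}(K_n^{(3,q)})$ as shown in Figures~\ref{CFK-K_n3,3k+1} and~\ref{CFK-K_n3,3k+2}, we have the following lemma,'' so the intended argument is precisely the generator count you describe. Your observation that every arrow in the staircase and box complexes strictly drops at least one filtration coordinate (hence the associated graded has trivial differential) is exactly the reason the tabulation reduces to counting generators at each $(d,s)$, and the later Sections~\ref{calCFK3k+1} and~\ref{calCFK3k+2} supply the explicit generator coordinates needed to complete that bookkeeping.
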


For a knot $K$, $\displaystyle{\Delta_K(t)=\sum_{d,i}(-1)^d t^i\cdot {\rm rank}\ \widehat{{\rm HFK}}_d(K;i)}$ \cite{OS}. By Lemma \ref{hatversion}, we can determine the Alexander polynomial of our knots:

\begin{lemma}\label{AlexanderPoly}
The Alexander polynomial of $K_n^{(3,3k+1)}$ is given as 
\begin{align*}
\Delta_{K_n^{(3,3k+1)}}(t)=&\sum_{i=1}^k\{-nt^{3i+1}+(2n+1)t^{3i}-(n+1)t^{3i-1}\}\\
&-nt+(2n+1)-nt^{-1}\\
&+\sum_{i=1}^k\{-(n+1)t^{-3i+1}+(2n+1)t^{-3i}-nt^{-3i-1}\}.
\end{align*}
Also, the Alexander polynomial of $K_n^{(3,3k+2)}$ is given as 
\begin{align*}
\Delta_{K_n^{(3,3k+2)}}(t)=&\sum_{i=1}^k\{(n+1)t^{3i+1}-(2n+1)t^{3i}+nt^{3i-1}\}\\
&(n+1)t-(2n+1)+(n+1)t^{-1}\\
&+\sum_{i=1}^k\{nt^{-3i+1}-(2n+1)t^{-3i}+(n+1)t^{-3i-1}\}.
\end{align*}
\end{lemma}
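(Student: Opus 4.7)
The plan is to apply the identity $\Delta_K(t)=\sum_{d,i}(-1)^d t^i\cdot\mathrm{rank}\,\widehat{\HFK}_d(K;i)$ stated just before the lemma, with the ranks supplied by Lemma \ref{hatversion}. The whole proof reduces to sign tracking and reindexing: for each of the six nontrivial families of bigradings I first determine the parity of $d$ (which fixes the sign of the contribution), and then change the summation variable so that the exponents of $t$ match the groupings in the claimed closed form.

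For $K_n^{(3,3k+1)}$, the four families with $d$-values $1-2j$, $-2k-1-4j$, $-1-2j$ and $-2k-3-4j$ are all odd, hence contribute with sign $-1$, while the two families with $d$-values $-2j$ and $-2k-4-4j$ are even and contribute with sign $+1$. Substituting $i=k-j$ in the three families whose $t$-exponent is positive, and $i=j$ or $i=j+1$ in the three families with negative $t$-exponent, regroups the contributions into the two sums $\sum_{i=1}^{k}\{-nt^{3i+1}+(2n+1)t^{3i}-(n+1)t^{3i-1}\}$ and $\sum_{i=1}^{k}\{-(n+1)t^{-3i+1}+(2n+1)t^{-3i}-nt^{-3i-1}\}$. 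The leftover $j=0$ terms of the first three families produce precisely the middle contribution $-nt+(2n+1)-nt^{-1}$.

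For $K_n^{(3,3k+2)}$ the analysis is structurally identical, but the parities of $d$ are flipped: the two families with ranks $2n+1$ now have odd $d$, while the four families with ranks $n+1$ or $n$ have even $d$. Consequently every sign is reversed relative to the previous case, and the same reindexing scheme yields the claimed formula.

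The main obstacle is purely bookkeeping: one must verify that every bigrading in Lemma \ref{hatversion} is accounted for exactly once with the correct coefficient, and correctly identify the boundary terms (from $j=0$ in some families and $j=k$ in others) that peel off to form the middle part of the answer. No further input is needed beyond the displayed Euler-characteristic formula for $\Delta_K$.
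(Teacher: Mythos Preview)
Your approach is exactly the paper's: the paper simply states the Euler-characteristic formula $\Delta_K(t)=\sum_{d,i}(-1)^d t^i\cdot\mathrm{rank}\,\widehat{\HFK}_d(K;i)$ and appeals to Lemma~\ref{hatversion}, and you have spelled out the resulting bookkeeping. One small slip: the ``leftover'' terms giving the middle piece $-nt+(2n+1)-nt^{-1}$ come from $j=k$ in the two positive-exponent families (after your substitution $i=k-j$) and from $j=0$ in the negative-exponent rank-$n$ family, not from ``$j=0$ of the first three families''; the computation itself is correct once this is adjusted.
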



\begin{lemma}\label{Kis(1,1)}
For $n\ge 0$,
the knot $K_n^{(3,q)}$ is a $(1,1)$--knot. In particular, it is a prime knot.
\end{lemma}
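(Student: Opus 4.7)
The plan is to prove $K_n^{(3,q)}$ is a $(1,1)$-knot by exhibiting an explicit $(1,1)$-diagram, and then to deduce primeness from a classical theorem. The detailed diagrammatic construction is the content of Section \ref{(1,1)diagram} for $q=3k+1$ and Section \ref{calCFK3k+2} for $q=3k+2$; here I outline the strategy.

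For $n=0$ the statement is immediate: $K_0^{(3,q)}=T(3,q)$ is a torus knot and sits on the standard genus-one Heegaard torus of $S^3$, which is already a $(1,1)$-presentation. For $n\ge 1$, I would start from this torus-knot position and analyse the effect of the inserted vertical full-twists from Figures \ref{knot3k+1} and \ref{knot3k+2}. The key idea is that the two strands receiving the twists can be arranged to cobound a small properly embedded disk in one of the two handlebodies of the genus-one Heegaard splitting; inserting $n$ full-twists along such a disk is a local modification that preserves bridge number one with respect to the Heegaard torus. Concretely, this should produce a presentation of $K_n^{(3,q)}$ as the union of two arcs, one in each handlebody, each boundary-parallel in its handlebody, which is precisely a $(1,1)$-diagram.

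The main obstacle is checking that, after the twisting, both arcs remain boundary-parallel in their respective handlebodies; this is not automatic and comes down to drawing the picture explicitly and reading off the $(1,1)$-structure, which is the pictorial work deferred to Section \ref{(1,1)diagram}. Once the $(1,1)$-property is in hand, primeness is a free consequence: every $(1,1)$-knot has tunnel number one, with the dual arc to the bridge serving as an unknotting tunnel, and by Norwood's theorem every tunnel number one knot is prime.
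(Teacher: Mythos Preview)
Your proposal is correct and follows essentially the same approach as the paper: the $(1,1)$-property is established by the explicit diagrammatic construction in Sections \ref{(1,1)diagram} and \ref{calCFK3k+2}, starting from the torus knot $(1,1)$-diagram and tracking the effect of the $n$ twists, and primeness is then deduced from a known general fact about $(1,1)$-knots. The only cosmetic difference is that you deduce primeness via Norwood's theorem on tunnel number one knots, whereas the paper cites Morimoto \cite{Mo} directly for the primeness of $(1,1)$-knots; these are equivalent routes.
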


\begin{proof}
In Sections \ref{(1,1)diagram} and \ref{calCFK3k+2}, we will give a $(1,1)$--decomposition of $K_n^{(3,q)}$. Also, it is well known that a $(1,1)$--knot is a prime knot (for example, Proposition 4.1 of \cite{Mo}).
\end{proof}

The fact that
$K_n^{(3,q)}$ is a $(1,1)$--knot is useful to calculate $\CFK^{\infty}(K_n^{(3,q)})$.

A knot is called an {\it $L$--space knot\/} if it admits a positive integer Dehn surgery yielding an $L$--space. 
Any positive torus knot is an $L$--space knot.
For details, see \cite{OSlens}.

\begin{lemma}\label{nonLspaceknot}
For $n\ge 1$, the knot $K_n^{(3,q)}$ is not an $L$--space knot. 
\end{lemma}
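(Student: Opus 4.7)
The plan is to invoke the standard obstruction to being an $L$--space knot due to Ozsv\'ath and Szab\'o: if $K$ is an $L$--space knot, then for every Alexander grading $i$, the rank of $\widehat{\HFK}(K;i)$ is at most $1$. Equivalently, every nonzero coefficient of the Alexander polynomial $\Delta_K(t)$ of an $L$--space knot equals $\pm 1$. This is exactly the kind of obstruction the ranks in Lemma \ref{hatversion} are designed to violate.

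Concretely, I would simply inspect the output of Lemma \ref{hatversion}. For $q=3k+1$, the Alexander grading $i=3k$ (coming from $j=0$ in the third line) contributes
\[
\mathrm{rank}\,\widehat{\HFK}_{0}\bigl(K_n^{(3,3k+1)};3k\bigr)=2n+1,
\]
and for $q=3k+2$, the Alexander grading $i=3k$ (from $j=0$ in the third line of the second formula) contributes
\[
\mathrm{rank}\,\widehat{\HFK}_{-1}\bigl(K_n^{(3,3k+2)};3k\bigr)=2n+1.
\]
For $n\ge 1$, one has $2n+1\ge 3>1$, so the rank bound is violated. Therefore $K_n^{(3,q)}$ cannot be an $L$--space knot.

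Alternatively, one can read the same obstruction directly off Lemma \ref{AlexanderPoly}: the coefficient of $t^{3k}$ in $\Delta_{K_n^{(3,q)}}(t)$ equals $\pm(2n+1)$, which is not $\pm 1$ when $n\ge 1$, so the Alexander polynomial alone already rules out the $L$--space property. There is essentially no obstacle here; the only thing to be careful about is citing the correct theorem of Ozsv\'ath--Szab\'o (\cite{OSlens}) giving the monic coefficient constraint on $\Delta_K$ for $L$--space knots, and pointing to the precise rows in Lemma \ref{hatversion} (or terms in Lemma \ref{AlexanderPoly}) that witness the violation.
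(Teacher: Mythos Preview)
Your proposal is correct and matches the paper's argument essentially verbatim: the paper also invokes the Ozsv\'ath--Szab\'o constraint from \cite{OSlens} that nonzero coefficients of the Alexander polynomial of an $L$--space knot are $\pm 1$, and appeals to Lemma~\ref{AlexanderPoly} to see this fails for $n\ge 1$. Your additional remark via the ranks in Lemma~\ref{hatversion} is a harmless equivalent formulation, and the paper likewise adds a parenthetical that the full knot Floer complex of an $L$--space knot is a staircase, which $\CFK^\infty(K_n^{(3,q)})$ is not.
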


\begin{proof}
Since non-zero coefficients of the Alexander polynomial of an $L$--space knot are $\pm 1$ \cite{OSlens}, the knot $K_n^{(3,q)}\ (n\ge 1)$ is not an $L$--space knot by Lemma \ref{AlexanderPoly}. (In fact, the knot Floer full complex of an $L$--space knot is a staircase type \cite{Kr}, but that of $K_n^{(3,q)}\ (n\ge 1)$ is not a such type.)
\end{proof}

\begin{lemma}\label{hyperbolic}
For $n\ge 1$, $K_n^{(3,q)}$ is a hyperbolic knot.
\end{lemma}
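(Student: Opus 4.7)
The plan is to invoke the Thurston trichotomy: a nontrivial prime knot in $S^3$ is either a torus knot, a satellite knot, or hyperbolic, with these three classes mutually exclusive. Lemma \ref{Kis(1,1)} already gives that $K_n^{(3,q)}$ is prime, and nontriviality is immediate from Lemma \ref{AlexanderPoly} since $\Delta_{K_n^{(3,q)}}(t)\neq 1$. It therefore suffices to rule out the torus and satellite possibilities.

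Excluding torus knots is short. Every torus knot has Alexander polynomial with all coefficients in $\{0,\pm 1\}$, whereas by Lemma \ref{AlexanderPoly} the constant term of $\Delta_{K_n^{(3,q)}}(t)$ is $\pm(2n+1)$, of absolute value at least $3$ for $n\geq 1$. Hence $K_n^{(3,q)}$ is not a torus knot. (One can alternatively cite Lemma \ref{nonLspaceknot}, together with the fact that the mirror of a negative torus knot is a positive torus knot and hence an $L$-space knot, to reach the same conclusion.)

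The main obstacle is excluding satellite knots. The approach I would take exploits the $(1,1)$-structure: by the classification results for satellite $(1,1)$-knots (due to Morimoto--Sakuma and Eudave-Mu\~noz), any such satellite is of a restricted cable-type form whose companion must itself be a $(1,1)$-knot---forced by tunnel-number constraints to be a torus knot---and whose pattern has winding number $w\geq 2$. Under this hypothesis the Alexander polynomial would factor as $\Delta_{K_n^{(3,q)}}(t)=\Delta_P(t)\cdot\Delta_C(t^w)$ with $\Delta_C$ a nontrivial torus-knot polynomial. I would derive a contradiction from Lemma \ref{AlexanderPoly}: for $n\geq 1$ the polynomial $\Delta_{K_n^{(3,q)}}(t)$ has nonzero coefficient at \emph{every} integer exponent in its symmetric span, whereas $\Delta_C(t^w)$ is supported in $w\Z$, so any product with a fixed $\Delta_P(t)$ is forced to obey a congruence pattern modulo $w$ that can be detected at specific coefficients (for instance, matching the constant term $\pm(2n+1)$ and the nearby $t^{\pm 1}$ terms of known sign against the product structure). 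A case analysis on $w$ and on the admissible degrees of $\Delta_P,\Delta_C$, cross-referenced with the Floer data of Lemma \ref{hatversion} when useful, completes the exclusion. Combining with the torus-knot exclusion and the trichotomy yields the hyperbolicity of $K_n^{(3,q)}$, and the delicate point is indeed this satellite step---invoking the correct structural theorem for $(1,1)$-satellites and carefully performing the polynomial bookkeeping.
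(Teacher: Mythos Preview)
Your torus-knot exclusion is fine and essentially matches the paper's. The satellite exclusion, however, is only a sketch: you invoke a structural classification of satellite $(1,1)$-knots and promise a case analysis on the factorization $\Delta_{K_n^{(3,q)}}(t)=\Delta_P(t)\cdot\Delta_C(t^w)$, but you do not actually carry it out. The parity/congruence heuristic you describe (``nonzero coefficient at every exponent versus support in $w\Z$'') does not by itself give a contradiction, since $\Delta_P$ can fill in the missing residue classes; a genuine argument would require pinning down the degrees and signs in several cases, and you have not done this. As written, the satellite step is a plausible plan rather than a proof.

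More to the point, you have overlooked a one-line argument that the paper uses. By definition (Figures~\ref{knot3k+1} and~\ref{knot3k+2}) the knot $K_n^{(3,q)}$ is the plat/braid closure of a $3$-braid, so its bridge number is at most $3$. Schubert's theorem \cite{Sc} implies that a prime satellite knot has bridge number at least $4$ (for a prime satellite the wrapping number is at least $2$ and the companion has bridge number at least $2$). Since $K_n^{(3,q)}$ is prime by Lemma~\ref{Kis(1,1)}, it cannot be a satellite. This replaces your entire $(1,1)$-classification and polynomial-factorization program with a single bridge-number inequality. The lesson: before reaching for a heavy structural classification, check whether an elementary numerical invariant already does the job.
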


\begin{proof}
By Lemma \ref{nonLspaceknot}, 
$K_n^{(3,q)}$ is not a positive torus knot.
Since the Alexander polynomial does not change under taking mirror image,
$K_n^{(3,q)}$ is not a negative torus knot.
On the other hand, $K_n^{(3,q)}$ is prime (Lemma \ref{Kis(1,1)}) and admits a three-bridge decomposition, so it is not a satellite knot \cite{Sc}. Therefore, $K_n^{(3,q)}$ is a hyperbolic knot. 
\end{proof}

A knot $K$ is called a {\it Floer thin knot\/} if its knot Floer homology $\widehat{\HFK}_d(K;i)$ is supported in a single grading $\delta=i-d$. The class of Floer thin knots includes all alternating knots, more generally all quasi-alternating knots \cite{MO,OSalt}. Also, the full knot Floer complex of a Floer thin knot is completely determined by its $\tau$ invariant and Alexander polynomial \cite{Pe}.

\begin{lemma}\label{nonFloerthinknot}
The knot $K_n^{(3,q)}$ is not a Floer thin knot.
\end{lemma}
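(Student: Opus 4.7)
The plan is to read off the $\delta$-grading ($\delta = i - d$) of the bigraded generators of $\widehat{\HFK}(K_n^{(3,q)})$ from Lemma \ref{hatversion} and exhibit at least two distinct values of $\delta$. Since a knot is Floer thin exactly when its knot Floer homology is supported in a single $\delta$-grading, this will suffice.

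First I would treat the case $q = 3k+1$ with $k \ge 1$. From the first row of the table in Lemma \ref{hatversion}, taking $j = 0$, there is a nonzero summand in bigrading $(d,i) = (1, 3k+1)$, contributing $\delta = 3k$. From the second row, taking $j = 0$, there is a nonzero summand in bigrading $(d,i) = (-2k-1, -1)$, contributing $\delta = 2k$. Since $k \ge 1$ (because $q \ge 4$), the two values $3k$ and $2k$ are distinct, so $\widehat{\HFK}(K_n^{(3,3k+1)})$ is supported in at least two $\delta$-gradings.

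Next I would treat the case $q = 3k+2$ with $k \ge 1$ in the same way. From the first row, $j = 0$ gives $(d,i) = (0, 3k+1)$, hence $\delta = 3k+1$. From the second row, $j = 0$ gives $(d,i) = (-2k-2, -1)$, hence $\delta = 2k+1$. Again $k \ge 1$ makes these distinct, so $\widehat{\HFK}(K_n^{(3,3k+2)})$ is not supported in a single $\delta$-grading either. In both cases this shows $K_n^{(3,q)}$ is not Floer thin.

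There is no real obstacle here: the whole argument is a direct inspection of Lemma \ref{hatversion}, and the only subtlety is to make sure the ranks $n$, $n+1$, $2n+1$ appearing in the listed bigraded pieces are all positive (which they are for every $n \ge 0$), so that the generators we used actually exist. If one preferred a proof not citing the table, an alternative (but heavier) route would be to note that the Floer complex of a Floer thin knot is determined by $\tau$ and $\Delta_K$ and consists of a single staircase plus boxes all of whose vertices lie in one diagonal; the complex pictured in Figures~\ref{CFK-K_n3,3k+1} and \ref{CFK-K_n3,3k+2} is manifestly not of this form. The $\delta$-grading computation above is cleaner and is the approach I would write up.
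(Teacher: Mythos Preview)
Your proof is correct and follows essentially the same approach as the paper's: read off two nonzero bigraded pieces from Lemma~\ref{hatversion} and observe that their $\delta$-values differ. The paper happens to use the rank-$(2n+1)$ row and lets $j$ vary (so $\delta = 3k-j$ takes at least two values for $k\ge 1$), whereas you compare the first two rows at $j=0$; either choice works.

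One small slip worth fixing: you assert that the ranks $n$, $n+1$, $2n+1$ are all positive for every $n \ge 0$, but of course the rank $n$ vanishes at $n=0$. Since for $q=3k+1$ you used precisely the two rows of rank $n$, your argument in that case only covers $n \ge 1$ (which is all Theorem~\ref{mainthm1} actually requires). The paper's choice of the rank-$(2n+1)$ row sidesteps this and also handles $n=0$, i.e.\ the torus knot $T(3,q)$ itself.
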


\begin{proof}
By Lemma \ref{hatversion}, 
${\rm rank}\ \widehat{HFK}_{-2j}(K_n^{(3,3k+1)};3k-3j)=2n+1\ne0\ (j=0,\ldots,k)$.
However, $\delta=(3k-3j)-(-2j)=3k-j$ depends on $j$. 
For $K_n^{(3,3k+2)}$, the argument is similar. 
\end{proof}

\section{Concordance and Proof of Theorem \ref{mainthm2}}\label{ProofThm2}

In this section, we discuss the concordance relation among our knots. 

When $q\ne q'$, $K_n^{(3,q)}$ and $K_m^{(3,q')}$ are not concordant. This can be seen as follows. Since the concordance invariant $\tau$ is invariant on a stably equivalence class, $\tau(K_n^{(3,q)})=\tau(T(3,q))$ by Lemma \ref{stablyequivalent}. Also, Corollary 1.7 of \cite{OStau} implies $\tau(T(3,q))=q-1$. Thus, $\tau(K_n^{(3,q)})\ne \tau(K_m^{(3,q')})$.

In the following, we consider the case $q=q'$.

\begin{lemma}\label{determinant}
The determinant of $K_n^{(3,q)}$ is given as 
\[
\det(K_n^{(3,q)})=
\begin{cases}
4n+3 & \text{if $q=3k+1$ and $k$ is odd},\\
4n+1 & \text{if $q=3k+1$ and $k$ is even},\\
4n+1 & \text{if $q=3k+2$ and $k$ is odd},\\
4n+3 & \text{if $q=3k+2$ and $k$ is even}.\\
\end{cases}
\]
\end{lemma}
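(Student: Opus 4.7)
The plan is to apply the classical identity $\det(K) = |\Delta_K(-1)|$ to the explicit Alexander polynomials computed in Lemma \ref{AlexanderPoly}. The crucial simplification is that powers of $-1$ with exponent of the form $3i+\varepsilon$ collapse by parity: since $(-1)^{3i}=(-1)^i$ and $(-1)^{3i\pm 1}=-(-1)^i$, and the same identities hold for negative exponents, every monomial $t^{3i+\varepsilon}$ evaluated at $t=-1$ becomes $\pm(-1)^i$. Consequently each of the two summations in $\Delta_{K_n^{(3,q)}}(t)$ becomes, after evaluation, an alternating sum whose coefficient is independent of $i$, and the whole computation reduces to recognizing
\[
\sum_{i=1}^{k}(-1)^i = \begin{cases} 0 & k \text{ even},\\ -1 & k \text{ odd}. \end{cases}
\]

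For $q=3k+1$, I expect each summand of the first sum in Lemma \ref{AlexanderPoly} to evaluate to $(-1)^i(4n+2)$; the same value to come out of each summand of the second sum; and the central block $-nt+(2n+1)-nt^{-1}$ to contribute $4n+1$. Adding these gives
\[
\Delta_{K_n^{(3,3k+1)}}(-1) = (4n+1) + 2(4n+2)\sum_{i=1}^{k}(-1)^i,
\]
which is $4n+1$ when $k$ is even and $-(4n+3)$ when $k$ is odd, matching the first two cases after taking absolute value. The parallel computation for $q=3k+2$ produces $(-1)^i(-4n-2)$ from each summand and a central contribution of $-4n-3$, so
\[
\Delta_{K_n^{(3,3k+2)}}(-1) = -(4n+3) + 2(-4n-2)\sum_{i=1}^{k}(-1)^i,
\]
equal to $-(4n+3)$ for $k$ even and $4n+1$ for $k$ odd; this matches the remaining two cases.

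There is no genuine conceptual obstacle; the proof is essentially careful bookkeeping. The only place where an error could creep in is the sign analysis of $(-1)^{3i\pm 1}$ for both positive and negative exponents in the two summations, so I would verify the parity identity $(-1)^{3i+\varepsilon}=(-1)^{i}\cdot(-1)^{\varepsilon}$ once at the outset and then apply it uniformly. The final answer then falls out by splitting into the four cases indexed by $q \bmod 3$ and $k \bmod 2$.
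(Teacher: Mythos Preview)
Your proposal is correct and follows exactly the approach indicated in the paper, namely evaluating $|\Delta_{K_n^{(3,q)}}(-1)|$ from the explicit Alexander polynomials of Lemma~\ref{AlexanderPoly}. Your detailed sign bookkeeping and the resulting formulas $\Delta_{K_n^{(3,3k+1)}}(-1)=(4n+1)+2(4n+2)\sum_{i=1}^{k}(-1)^i$ and $\Delta_{K_n^{(3,3k+2)}}(-1)=-(4n+3)-2(4n+2)\sum_{i=1}^{k}(-1)^i$ check out in all four parity cases.
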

\begin{proof}
It follows by calculating $|\Delta_{K_n^{(3,q)}}(-1)|$ from Lemma \ref{AlexanderPoly}.
\end{proof}

\begin{lemma}\label{nonconcordant}
$K_n^{(3,q)}$ is not concordant to $K_m^{(3,q)}$ if either of the following is satisfied\textup{:}
\begin{itemize}
\item $q=3k+1$, $k$ is odd and $(4n+3)(4m+3)$ is not a square integer;
\item $q=3k+1$, $k$ is even and $(4n+1)(4m+1)$ is not a square integer;
\item $q=3k+2$, $k$ is odd and $(4n+1)(4m+1)$ is not a square integer; or
\item $q=3k+2$, $k$ is even and $(4n+3)(4m+3)$ is not a square integer.
\end{itemize}
\end{lemma}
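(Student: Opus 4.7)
The plan is to obstruct concordance via the classical Fox--Milnor theorem. Recall that if $J\subset S^3$ is a slice knot, then $\Delta_J(t)=f(t)f(t^{-1})$ for some $f\in\Z[t,t^{-1}]$, and hence $\det(J)=|\Delta_J(-1)|=f(-1)^{2}$ is an odd perfect square. This is the only nontrivial ingredient I will need beyond Lemma \ref{determinant}.

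First, I would translate concordance into sliceness: two knots $K_1,K_2$ are smoothly concordant if and only if $K_1\#(-K_2)$ is slice. The determinant is multiplicative under connected sum, and since $\Delta_{-K}(t)\doteq\Delta_{K}(t^{-1})$ evaluation at $t=-1$ gives $\det(-K)=\det(K)$. Combining these facts with the Fox--Milnor obstruction, I would conclude that if $K_n^{(3,q)}$ and $K_m^{(3,q)}$ are concordant, then
\[
\det(K_n^{(3,q)})\cdot\det(K_m^{(3,q)})
\]
must be a perfect square.

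Next, I would read off the determinants from Lemma \ref{determinant}. Since the two knots share the same $q$, that lemma gives $\det(K_n^{(3,q)})=4n+c$ and $\det(K_m^{(3,q)})=4m+c$ for the same constant $c\in\{1,3\}$, where the value of $c$ is dictated by whether $q=3k+1$ or $q=3k+2$ together with the parity of $k$. The four bullet points in the statement correspond precisely to these four cases, and in each one the hypothesis is that $(4n+c)(4m+c)$ is not a square integer. This contradicts the necessary condition derived in the previous paragraph, so $K_n^{(3,q)}$ and $K_m^{(3,q)}$ are not concordant.

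I do not anticipate a serious obstacle here: all of the nontrivial work has been carried out in Lemma \ref{determinant}, and what remains is a direct application of the Fox--Milnor obstruction, together with the elementary multiplicative and mirror-invariance properties of the determinant. The only matter requiring a small amount of care is making sure the four parity cases of Lemma \ref{determinant} are paired off correctly with the four bullet points in the statement, which is purely bookkeeping.
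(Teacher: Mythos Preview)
Your proposal is correct and follows essentially the same approach as the paper: the paper also applies the Fox--Milnor obstruction (cited as \cite{FM}) to $K_n^{(3,q)}\#(-K_m^{(3,q)})$, using multiplicativity of the determinant and Lemma~\ref{determinant}. Your write-up is somewhat more detailed, but the logical content is identical.
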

\begin{proof}
If $\det(K_n^{(3,q)}\# -K_m^{(3,q)})=\det(K_n^{(3,q)})\det(K_m^{(3,q)})$ is not a square integer, then $K_n^{(3,q)}\# -K_m^{(3,q)}$ is not a slice knot, that is, $K_n^{(3,q)}$ is not concordant to $K_m^{(3,q)}$ \cite{FM}. Thus, Lemma \ref{determinant} implies the conclusion.
\end{proof}

\begin{proof}[Proof of Theorem \ref{mainthm2}]
Suppose that $q=3k+1$ and $k$ is odd. If $n$ and $m$ are distinct non-negative integers such that $4n+3$ and $4m+3$ are prime, then $K_n^{(3,q)}$ is not concordant to $K_m^{(3,q)}$ by Lemma \ref{nonconcordant}. Since there are infinitely many prime integers of form $4n+3$, the claim is true when $q=3k+1$ and $k$ is odd.
The other cases can be shown in the same manner.
 \end{proof}
 
 \begin{remark}
We expect that $K_n^{(3,q)}$ and $K_m^{(3,q)}$ are not concordant for $n\ne m$, but we could not prove it. For several $n,m$, we can verify that they are not concordant by the irreducibility of the Alexander polynomial or the Levine--Tristram signature.  
 \end{remark}
\section{$(1,1)$--diagram of the knot $K_n^{(3,3k+1)}$}\label{(1,1)diagram}
In this section, we give a $(1,1)$--diagram of $K_n^{(3,3k+1)}$, following the process described in \cite{GMM,Va}. 

\begin{definition}
Let $K$ be a knot in $S^3$, $\Sigma$ be the torus, $\alpha,\beta$ be simple closed curves in $\Sigma$ and $z,w$ be two points in $\Sigma$ disjoint from $\alpha,\beta$. A tuple $(\Sigma;\alpha,\beta;z,w)$ is called a {\it $(1,1)$--diagram\/} of $K$ if
\begin{itemize}
\item $(\Sigma;\alpha,\beta)$ is the standard genus one Heegaard diagram of $S^3$,
\item $K=t\cup t'$, where $t$ (resp. $t'$) is a trivial arc connecting $z,w\in\Sigma$ in the outer (resp. inner) solid torus such that it is disjoint from the meridian disk $D_{\beta}$ (resp. $D_{\alpha}$) bounded by $\beta$ (resp. $\alpha$).
\end{itemize}
\end{definition}

We start from the standard genus one Heegaard splitting $(V_{\alpha},V_{\beta})$ of $S^3$, which has meridian curves $\alpha$ and $\beta$ respectively. We put a boundary parallel arc $t$ in the outer solid torus $V_{\beta}$ such that it is disjoint from the meridian disk bounded by $\beta$. For the endpoints of $t$, the white dot is $z$ and the black dot is $w$. To make a $(1,1)$--diagram, move $t$ so that the union of $t$ with $t'$ forms the knot $K_n^{(3,3k+1)}$, where $t'$ is another boundary parallel arc connecting $z,w$ in the inner solid torus $V_{\alpha}$.

In Figure \ref{1,1diagram-K_n3,3k+1-1-4}, we move the arc $t$ so that the union of $t$ with $t'$ forms the knot $K_n^{(3,3k+1)}$. The $\beta$ curve (blue) is moved so as not to touch two endpoints of $t$. We merge parallel arcs into one curve with multiplicity, see Figure \ref{merging}. To complete the procedure, we need to add $n$-twists in a clockwise direction in the region indicated the dotted box. 

\begin{figure}[h]
\begin{tabular}{ccc}
\begin{minipage}{.50\textwidth}
\centering
\includegraphics[scale=0.25]{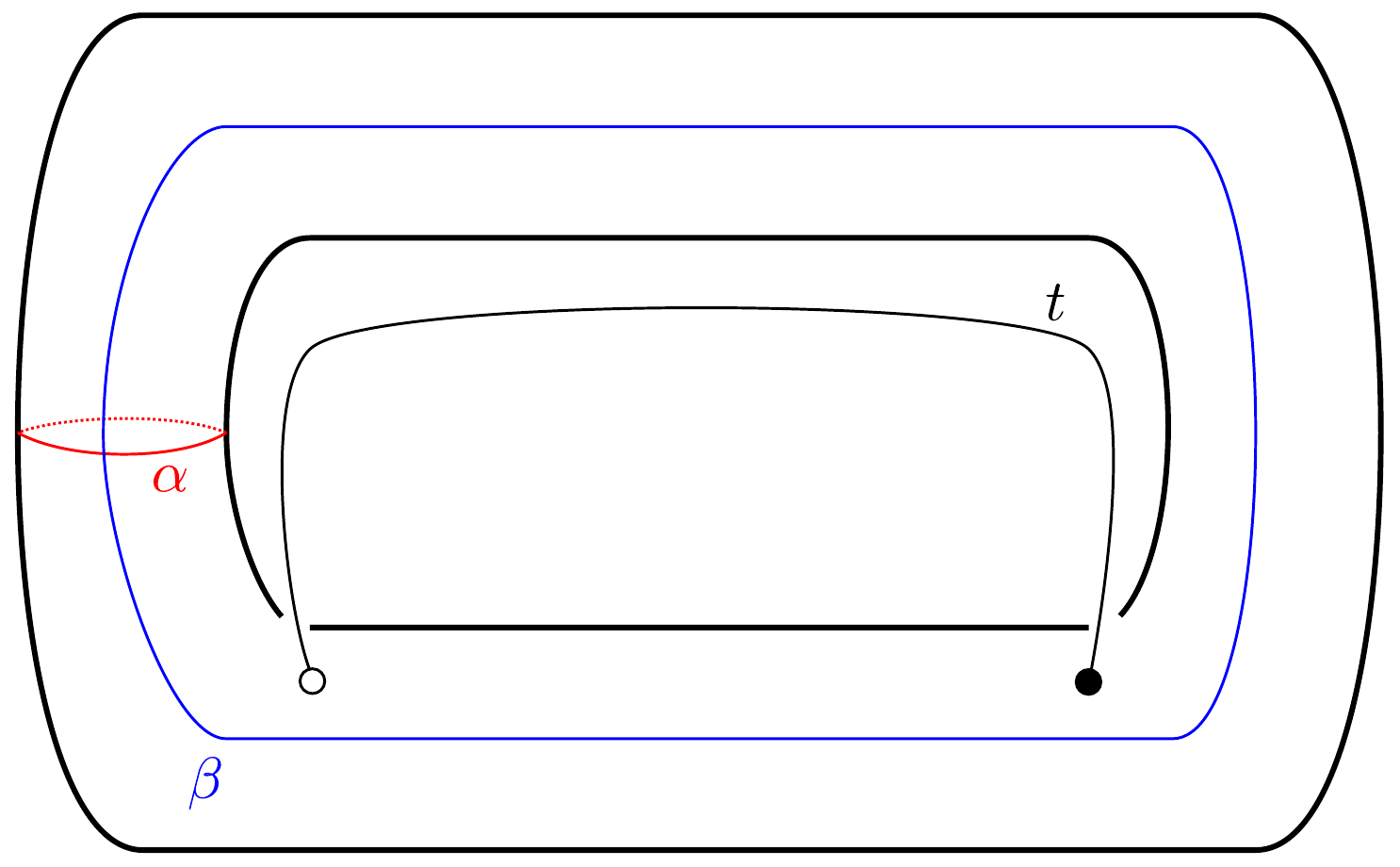}
\end{minipage}
$\longrightarrow$
\begin{minipage}{.50\textwidth}
\centering
\includegraphics[scale=0.25]{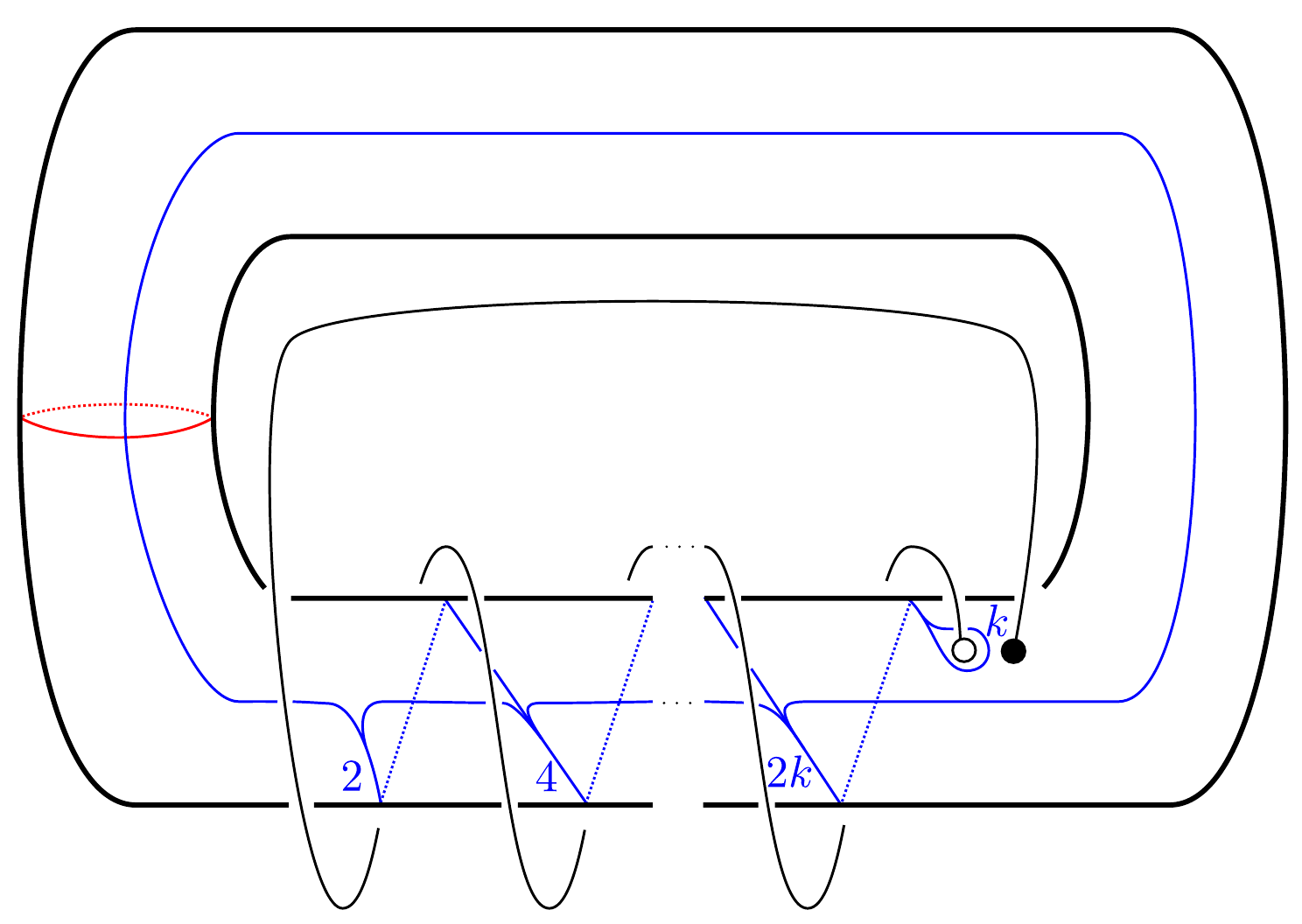}
\end{minipage}\\
$\swarrow$\\
\begin{minipage}{.50\textwidth}
\centering
\includegraphics[scale=0.25]{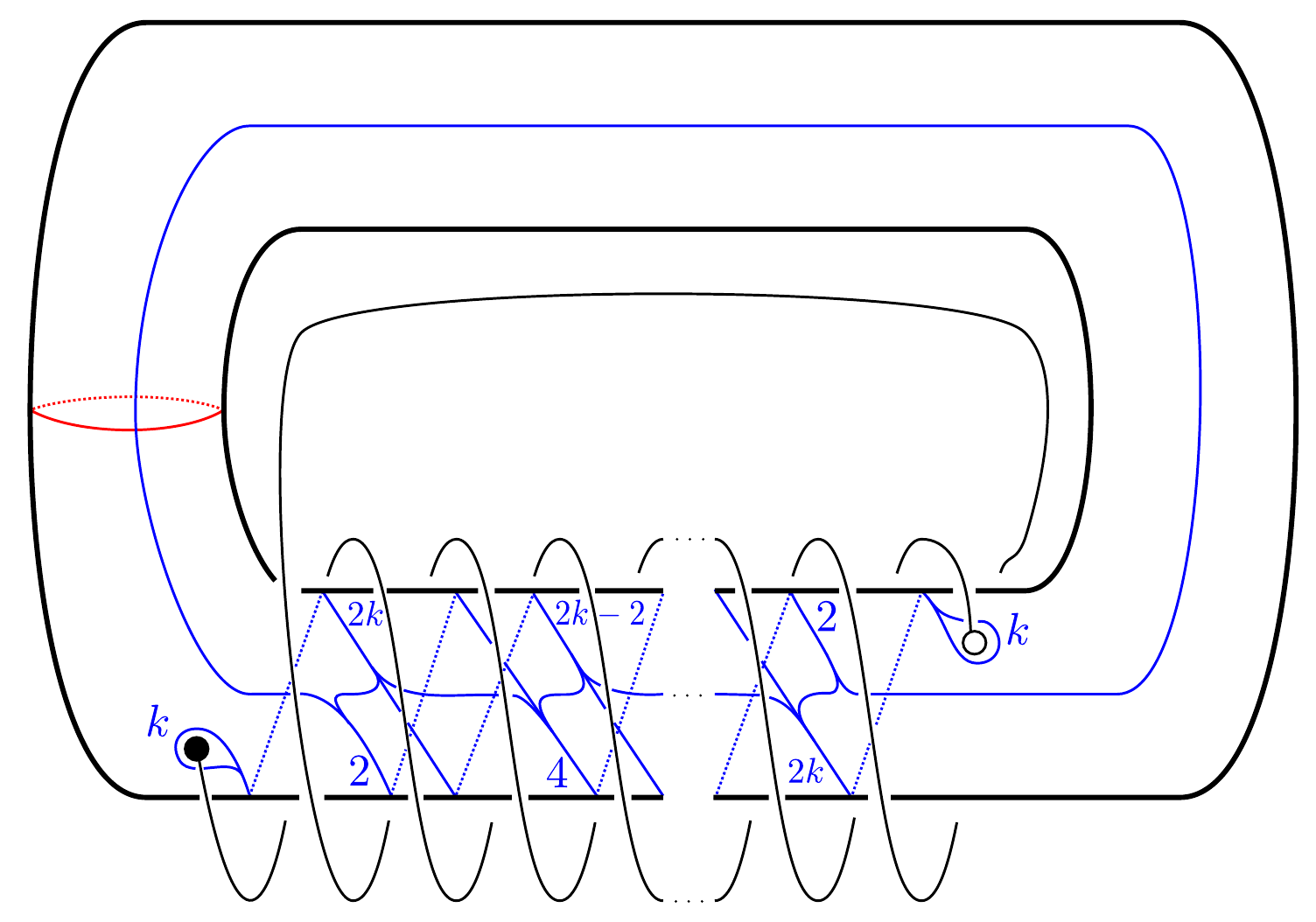}
\end{minipage}
$\longrightarrow$
\begin{minipage}{.50\textwidth}
\centering
\includegraphics[scale=0.25]{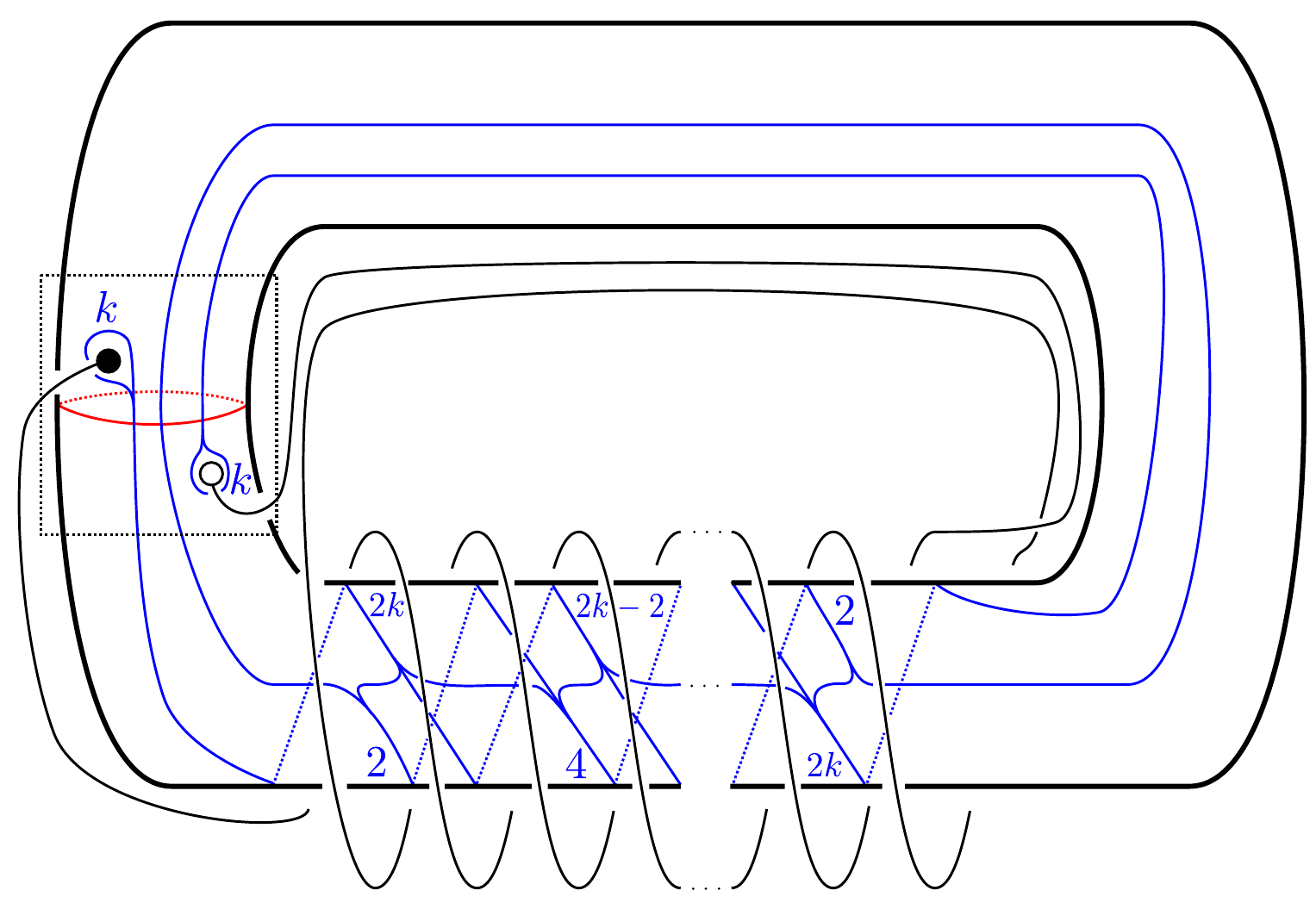}
\end{minipage}\\
\end{tabular}
\caption{How to move the arc $t$ to get a $(1,1)$--diagram of $K_n^{(3,3k+1)}$. The bottom right is a $(1,1)$--diagram of $K_0^{(3,3k+1)}=T(3,3k+1)$. To obtain a $(1,1)$--diagram of $K_n^{(3,3k+1)}$, we need to add $n$-twists clockwisely in the region indicated the dotted box.}
\label{1,1diagram-K_n3,3k+1-1-4}
\end{figure}

\begin{figure}[h]
\centering
\includegraphics[scale=0.3]{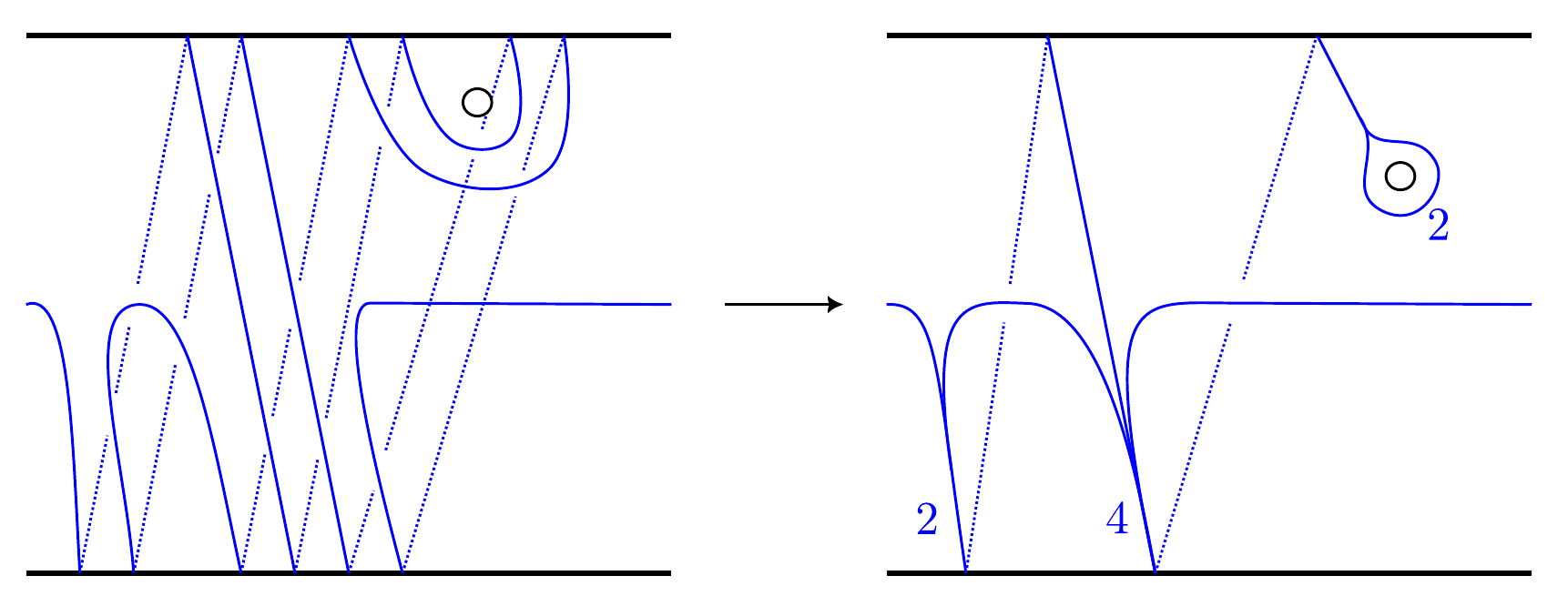}
\caption{Parallel arcs are merged into one curve with multiplicity.}
\label{merging}
\end{figure}

Figure \ref{1,1diagram-K_n3,3k+1-5-6} shows that the $\beta$ curve is neatly aligned in a complete $(1,1)$-diagram of $K_0^{(3,3k+1)}$ drawn without the arc $t$.

\begin{figure}[h]
\begin{tabular}{c}
\begin{minipage}{.50\textwidth}\centering
\includegraphics[scale=0.25]{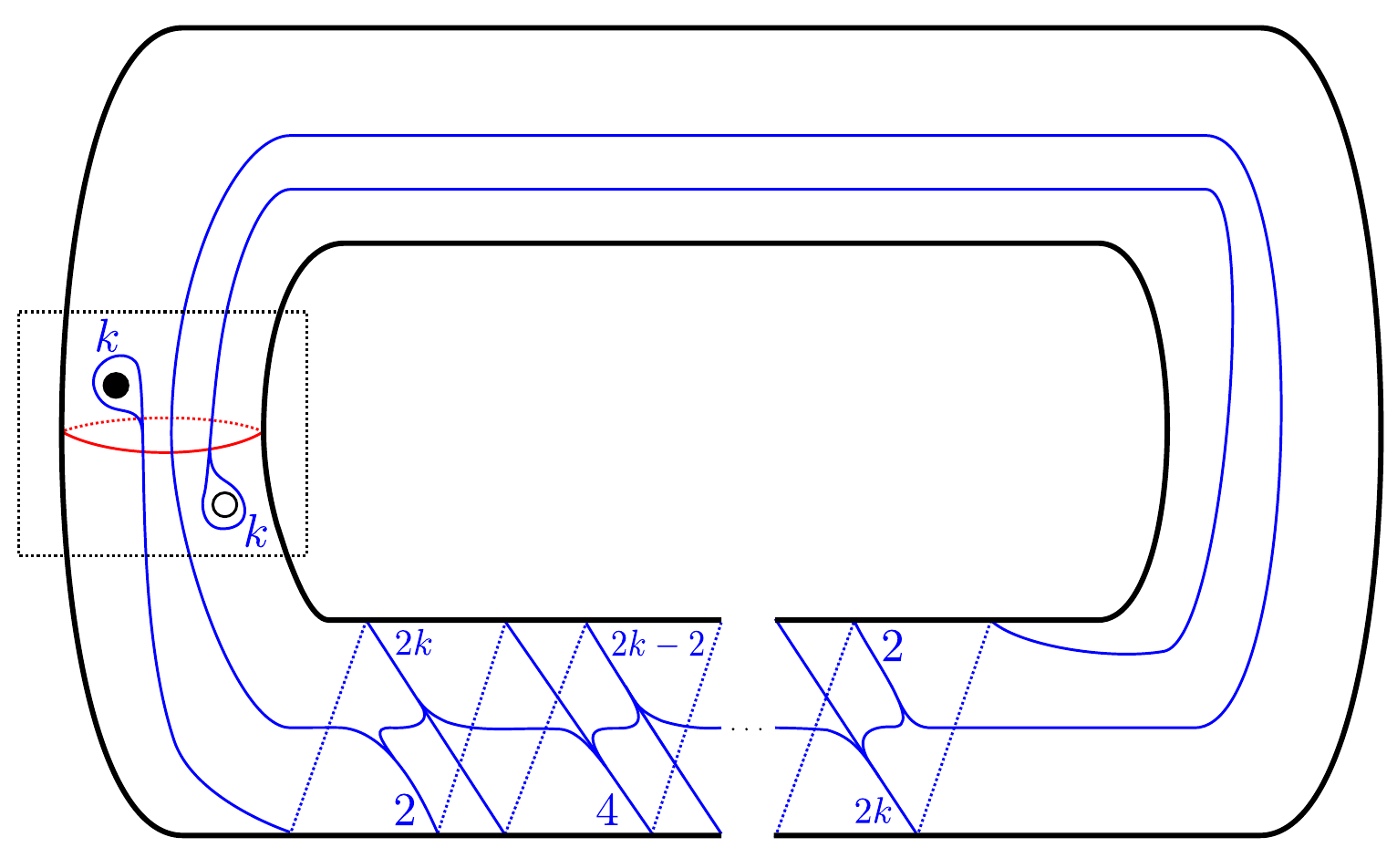}
\end{minipage}
$\longrightarrow$
\begin{minipage}{.50\textwidth}\centering
\includegraphics[scale=0.25]{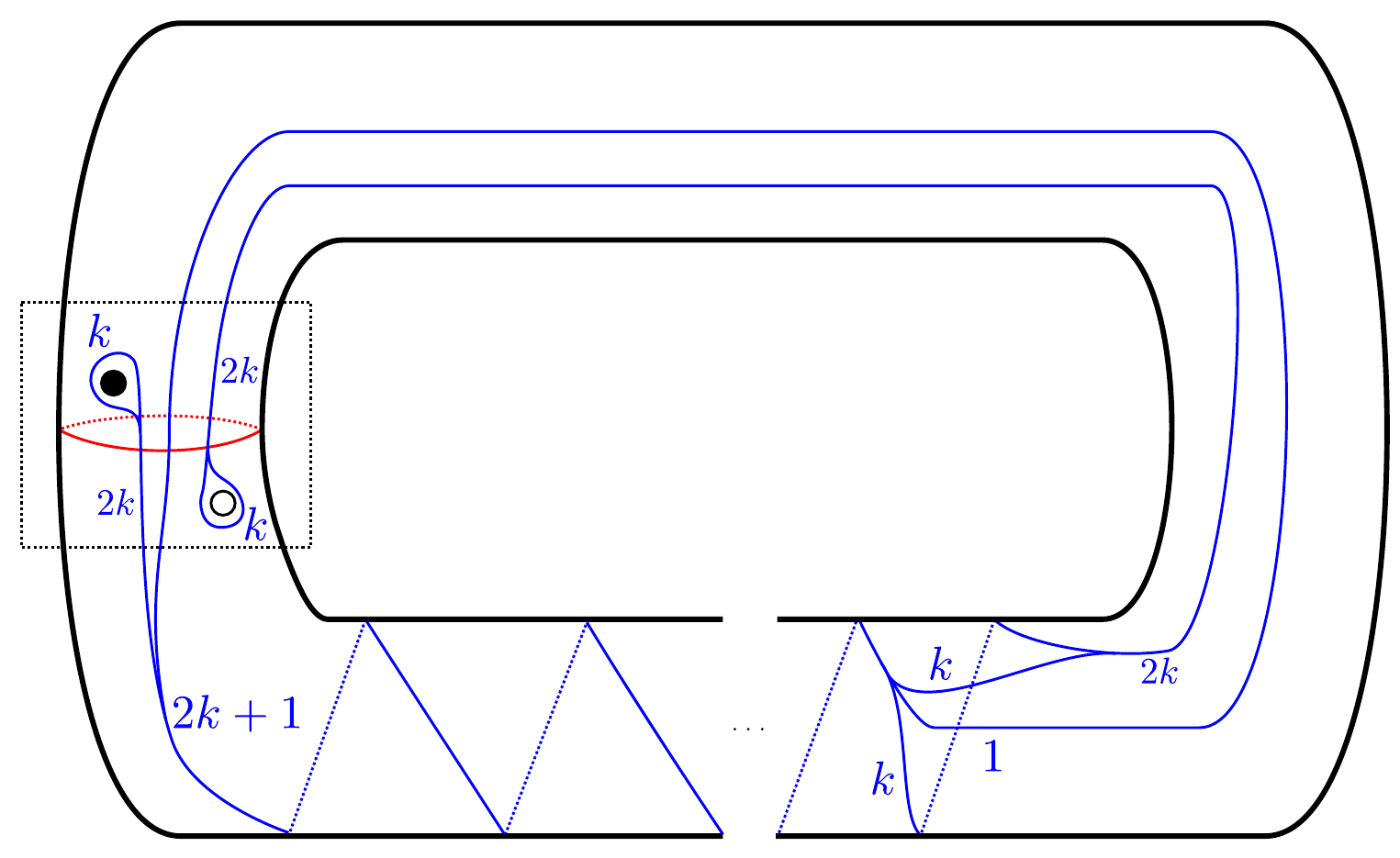}
\end{minipage}\\
\includegraphics[scale=0.4]{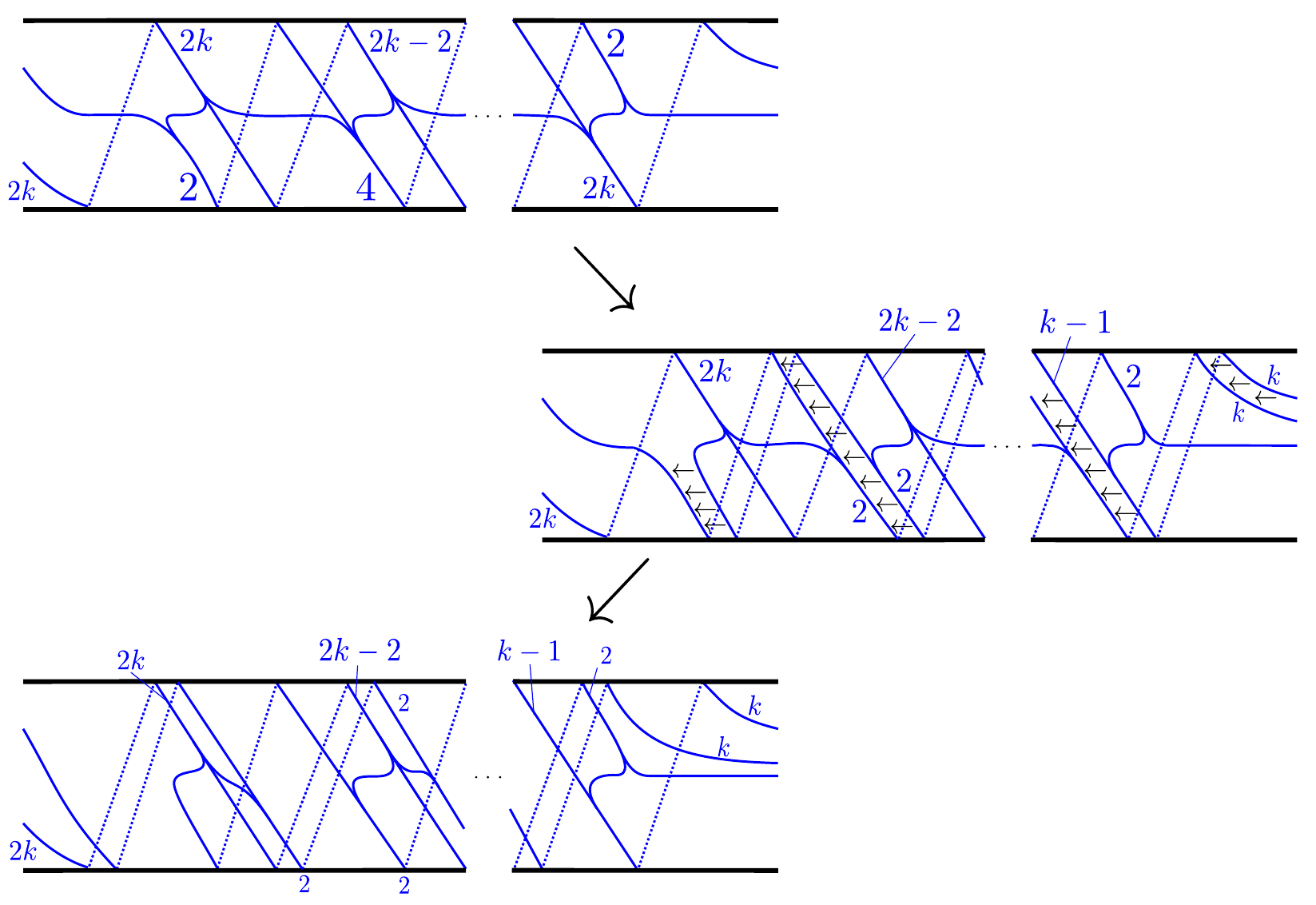}
\end{tabular}
\caption{The $\beta$ curve is neatly aligned. Bottom is a detailed description of how to aline the $\beta$ curve.}
\label{1,1diagram-K_n3,3k+1-5-6}
\end{figure}

To simplify the calculation of $\CFK^{\infty}$, we perform two further operations. First, in Figure \ref{1,1diagram-K_n3,3k+1-afterDhentwists}, we apply left handed Dehn twists along the $\alpha$ curve to the $(1,1)$-diagram of $K_0^{(3,3k+1)}$. These twists do not affect the calculation of $\CFK^{\infty}$. Second, it is convenient to consider the universal cover of a $(1,1)$--diagram. We cut the torus along $\alpha$ and the standard longitude curve, then the result is shown in Figure \ref{1,1rectangle-K_n3,3k+1} where the rectangle restores the torus by identifying the left side with the right side, and the top and bottom as usual.  

\begin{figure}[h]
\centering
\includegraphics[scale=0.3]{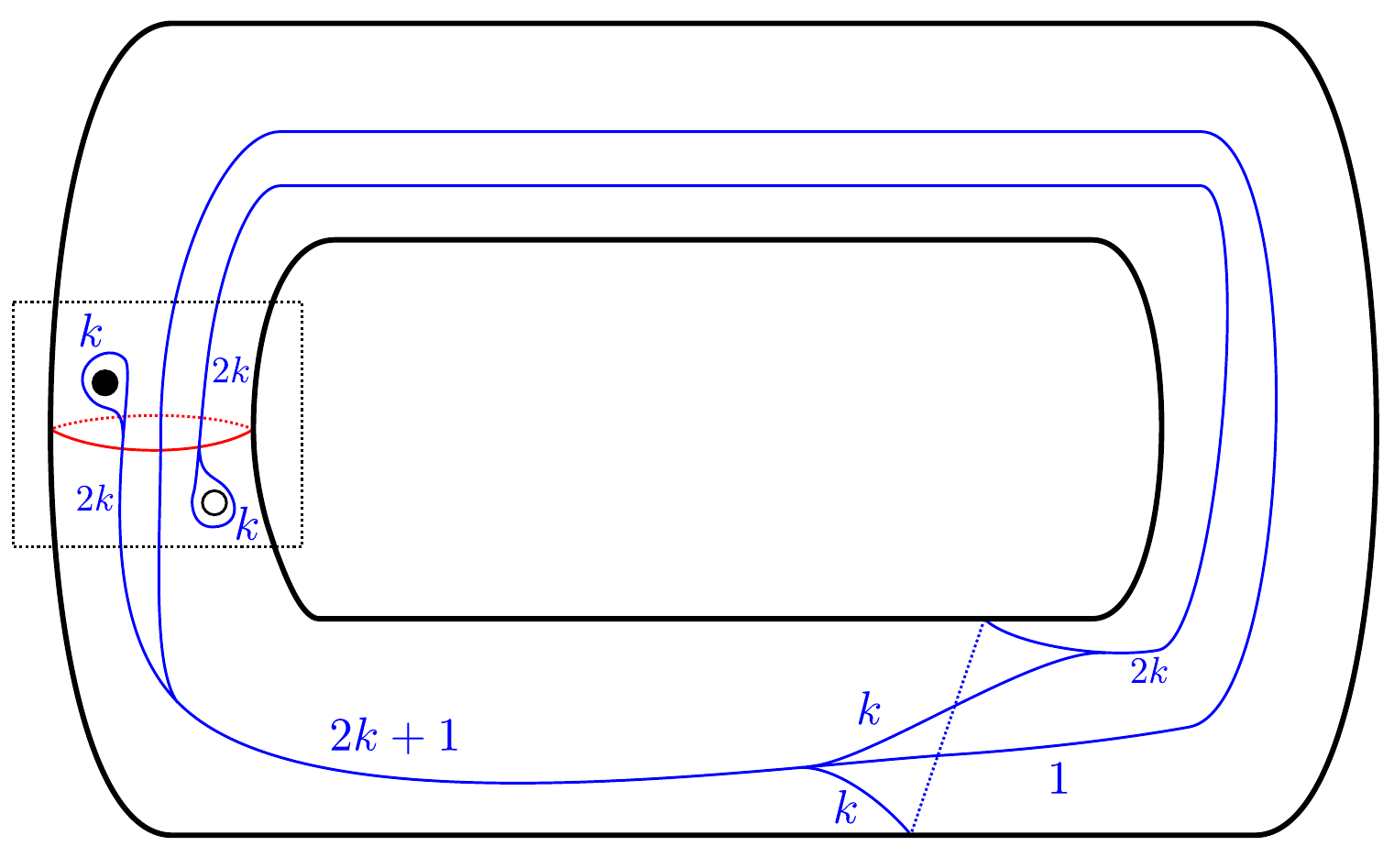}
\caption{A $(1,1)$-diagram of $K_0^{(3,3k+1)}$ after left handed Dehn twists along the $\alpha$ curve.}
\label{1,1diagram-K_n3,3k+1-afterDhentwists}
\end{figure}

\begin{figure}[h]
\centering
\includegraphics[scale=0.3]{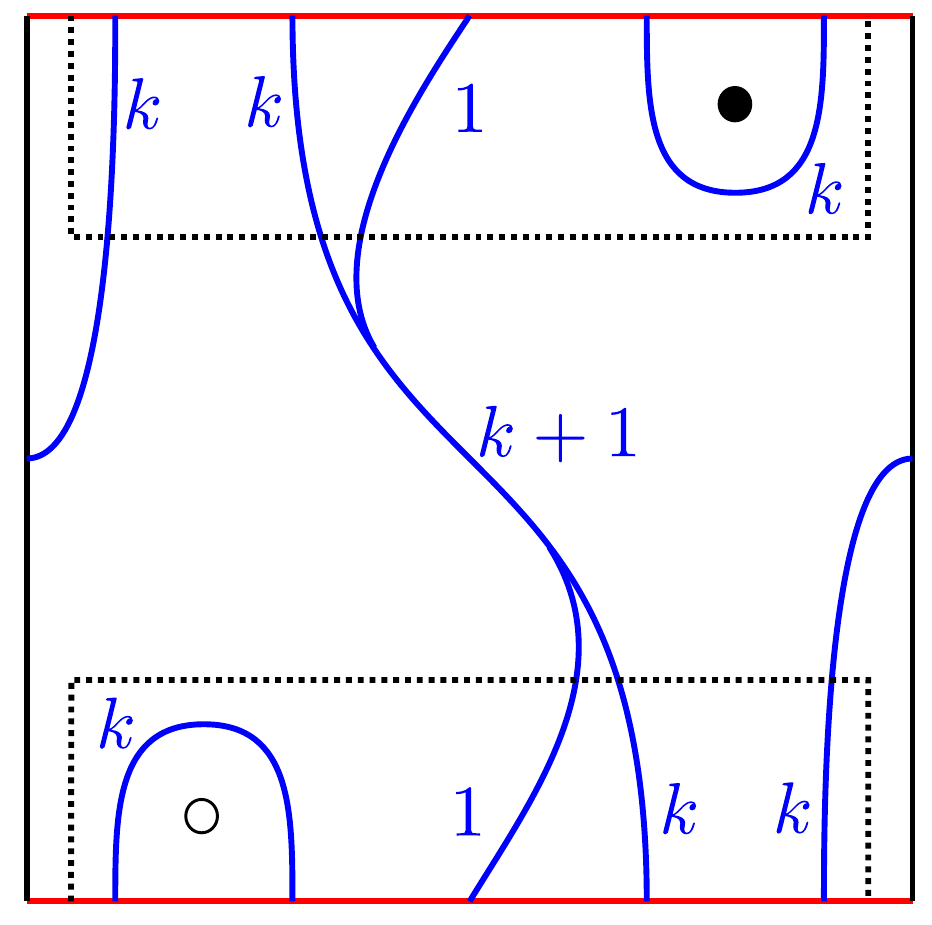}
\caption{The rectangle is obtained by cutting the diagram in Figure \ref{1,1diagram-K_n3,3k+1-afterDhentwists} along $\alpha$ and the standard longitude. When it restores the torus, the top and bottom sides (red) become the $\alpha$ curve.}
\label{1,1rectangle-K_n3,3k+1}
\end{figure}



\section{Calculation of  the full knot Floer complex $\CFK^{\infty}(K_n^{(3,3k+1)})$}\label{calCFK3k+1}

We can combinatorially calculate the full knot Floer complex $\CFK^{\infty}(K_n^{(3,q)})$ from its $(1,1)$--diagram (see, for example, \cite{GMM,OS,Va}).
We now explain how to get the full knot Floer complex of a $(1,1)$--knot $K$. Let $(\Sigma;\alpha,\beta;z,w)$ be its $(1,1)$--diagram. 

First, for $x\in\alpha\cap\beta$, the element $[x,i,j]$ with the Alexander grading $A(x)=j-i$ becomes a generator of $\CFK^{\infty}(K)$. 

Second, to define the differentials, we consider a {\it Whitney disk\/} which is a bigon formed by an arc of $\alpha$ and an arc of $\beta$. 
More precisely, a Whitney disk $\phi\colon D\to\Sigma$ (where $D\subset\C$ is the closed unit disk) maps the right half-arc to an arc of $\alpha$ and the left half-arc to an arc of $\beta$, see Figure \ref{WhitneyDisk}. Then, it is called a {\it Whitney disk connecting $\phi(-i)$ to $\phi(i)$}. Moreover, for a Whitney disk $\phi$, let $n_z(\phi)$ (resp. $n_w(\phi)$) be the algebraic intersection number between $\phi$ and $z$ (resp. $w$) in $\Sigma$. With these settings, we define $\partial [x,i,j]$ as 
\[
\partial [x,i,j] =\sum_{y\in\alpha\cap\beta}\sum_{\phi}[y,i-n_w(\phi),j-n_z(\phi)]
\]
where $\phi$ runs over Whitney disks connecting $x$ to $y$, which hold $n_z(\phi), n_w(\phi)\ge 0$ and have the Maslov index one. It is hard to give the definition of the Maslov index here, but the formula of \cite{Li} is helpful (also, Lemma 6.5 of \cite{OS} is useful). In the following, we consider only Whitney disks with $n_z(\phi), n_w(\phi)\ge 0$ and Maslov index one. Also, it is convenient to use the universal cover of the torus $\Sigma$ to find such disks. Moreover, it suffices to consider one connected component $\tilde{\alpha}$ (resp. $\tilde{\beta}$) of lifts of $\alpha$ (resp. $\beta$). 

\begin{figure}[h]
\centering
\includegraphics[scale=0.3]{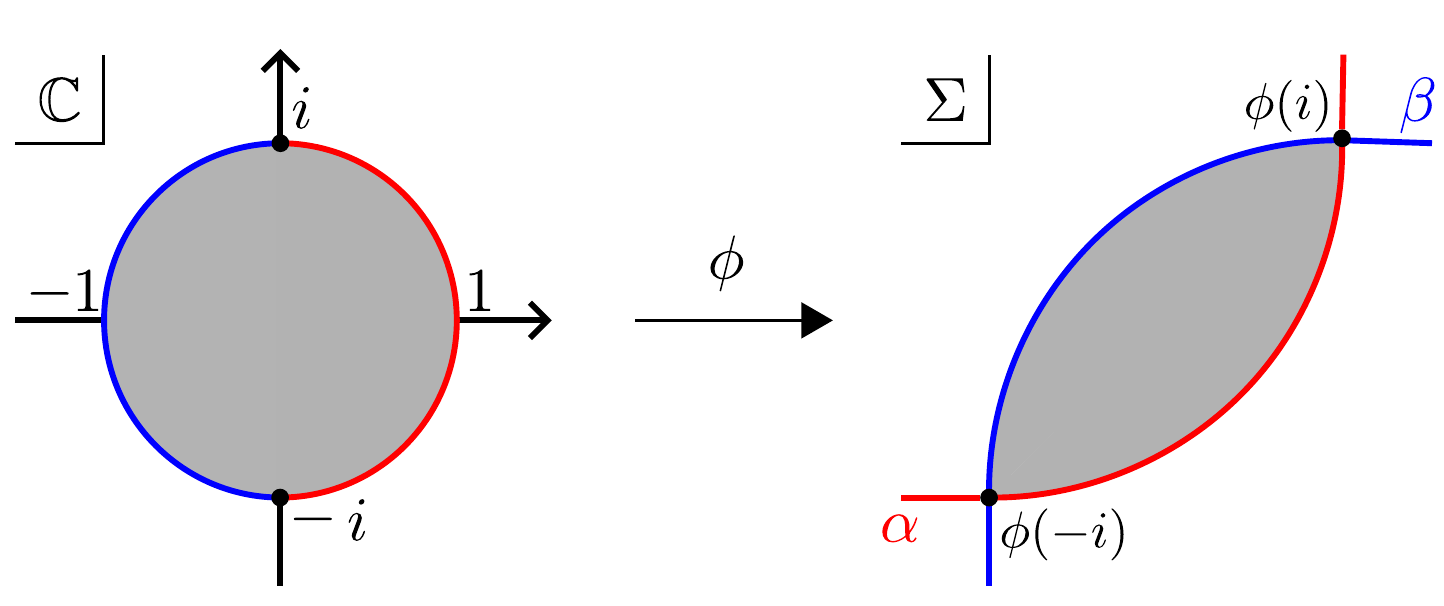}
\caption{A Whitney disk connecting $\phi(-i)$ to $\phi(i)$.}
\label{WhitneyDisk}
\end{figure}

Third, the Alexander grading $A\colon\alpha\cap\beta\to\Z$ is uniquely determined by following two properties:
\begin{itemize}
\item for a Whitney disk $\phi$ connecting $x$ to $y$, 
\[
A(x)-A(y)=n_z(\phi)-n_w(\phi)
\]
holds; and
\item symmetric, that is, $\#\{x\mid A(x)=i\}=\#\{x\mid A(x)=-i\}$ for any $i\in\Z$.
\end{itemize}

Finally, the Maslov grading $M\colon \alpha\cap\beta\to\Z$ can be given as follows. We consider only generators of form $[x,0,j]$ and differentials derived from a Whitney disk with $n_w=0$. Then, its homology represents $\widehat{HF}(S^3)\cong\F_2$, and the Maslov grading of its generator is set to be zero. This fixes the Maslov grading of the other generators of $\CFK^{\infty}(K)$ by the differentials and the action of $U$.

In this section, we demonstrate calculations of $\CFK^{\infty}(K_1^{(3,4)})$ and $\CFK^{\infty}(K_2^{(3,7)})$. Thereafter, the general case $q=3k+1$ is considered.

\subsection{$\CFK^{\infty}(K_1^{(3,4)})$ (the case $q=3\cdot 1+1,\ n=1$)}
Figure \ref{1,1universal-K_1+3,4} shows the universal cover of a $(1,1)$--diagram of $K_1^{(3,4)}$. The intersection points of $\tilde{\alpha}$ and $\tilde{\beta}$ are labeled by 
\[
b^1_1,b^1_2,b^1_3,a^1_3,a^1_2,a^1_1,f_1,f_2,g,e_2,e_1,c^1_1,c^1_2,c^1_3,d^1_3,d^1_2,d^1_1
\]
along $\tilde{\beta}$ from left-upper to right-lower. 

\begin{figure}[h]
\centering
\begin{tabular}{c}
\includegraphics[scale=0.4]{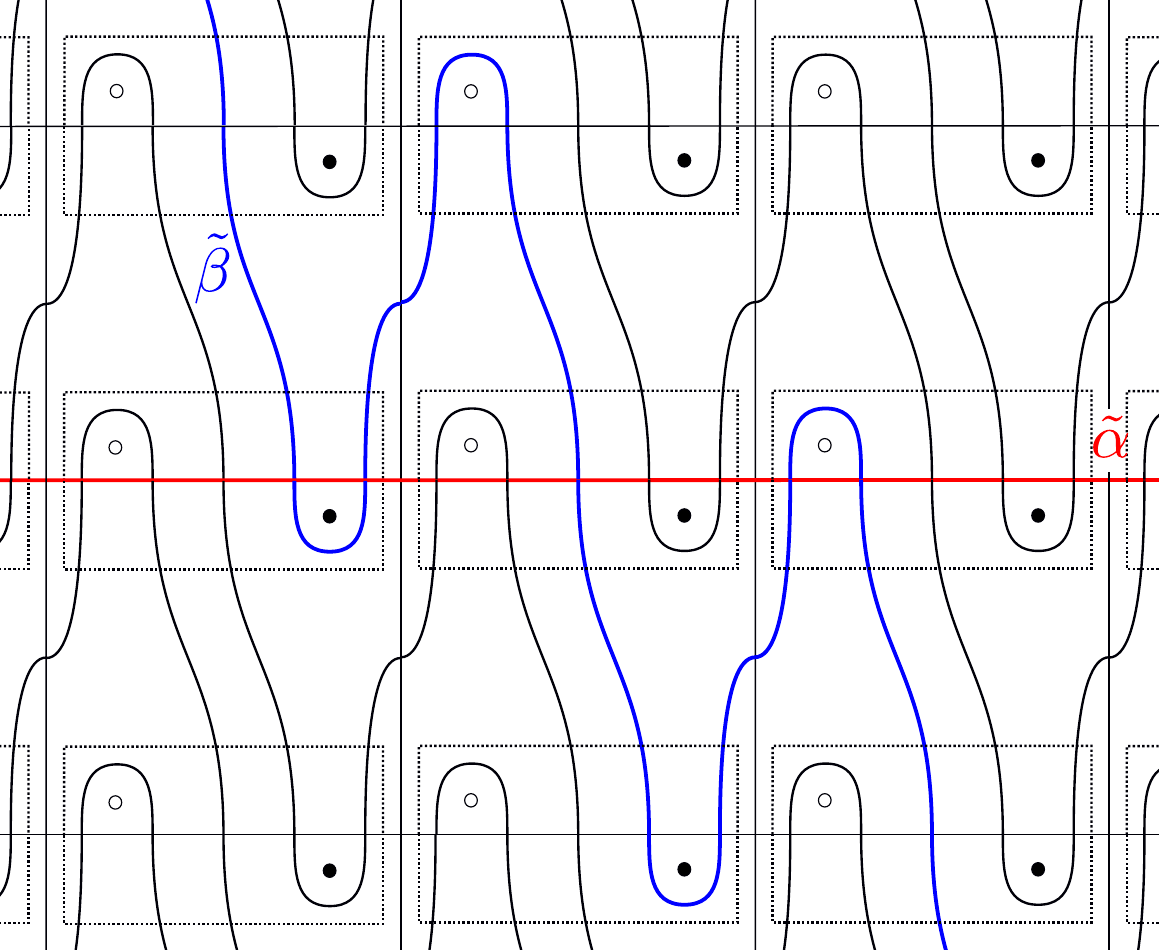}\\
$\downarrow$\\
\includegraphics[scale=0.37]{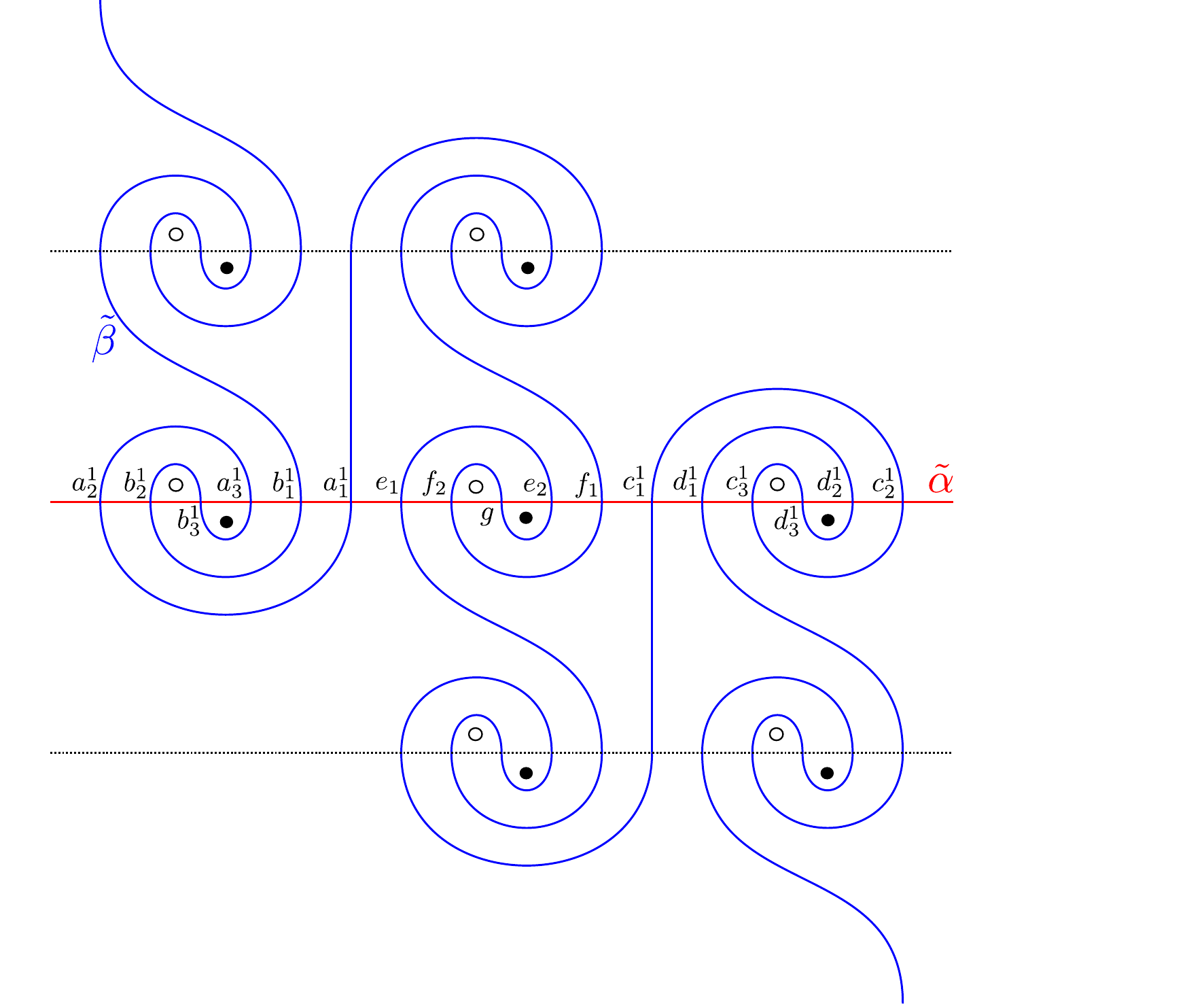}
\end{tabular}
\caption{(Top) The universal cover of a $(1,1)$--diagram of $K_0^{(3,4)}$ (see Figure \ref{1,1rectangle-K_n3,3k+1}). Let $\tilde{\alpha}$ (red) and $\tilde{\beta}$ (blue) be one connected components of lifts of $\alpha$ and $\beta$, respectively. (Bottom) The universal cover of a $(1,1)$--diagram of $K_1^{(3,4)}$. It can be obtained from the top figure by  twisting  two points once  clockwisely in the dotted box. The labeling of $\tilde{\alpha}\cap\tilde{\beta}$ is indicated to the left of each point. Recall that the white dot is $z$ and the black dot is $w$.}
\label{1,1universal-K_1+3,4}
\end{figure}

Then, Whitney disks contributing to the differentials of $\CFK^{\infty}(K_1^{(3,4)})$ can be seen as in Table \ref{differential-K_1^{3,4}}. For example, there is a Whitney disk connecting $a^1_1$ to $a^1_2$ with black dot, which is shaped like a semi-disk. On the other hand, there is only one disk connecting $a^1_1$ to $a^1_3$, but it has the Maslov index two, so it does not contribute to the differential. In this way, all Whitney disks are found. Also, a Whitney disk from $b^1_3,\ d^1_3$ or $g$ does not exist. 

\begin{table}[h]
\begingroup
\renewcommand{\arraystretch}{1.2}
\begin{tabular}{|c|c|c||c|c|c|}
\hline
from & to & base point & from & to & base point\\
\hline
\multirow{4}{*}{$a^1_1$} & $a^1_2$ & $\bullet$ & \multirow{4}{*}{$c^1_1$} & $c^1_2$ & $\circ$ \\
& $b^1_1$ & $\bullet$ && $d^1_1$ & $\circ$ \\ 
& $f_1$ & $\circ\circ$ && $f_1$ & $\bullet\bullet$\\
& $e_1$ & $\circ\circ$ && $e_1$ & $\bullet\bullet$\\
\hline
\multirow{2}{*}{$a^1_2$} & $a^1_3$ & $\circ$ & \multirow{2}{*}{$c^1_2$} & $c^1_3$ & $\bullet$\\
& $b^1_2$ & $\bullet$ && $d^1_2$ & $\circ$\\
\hline
$a^1_3$ & $b^1_3$ & $\bullet$ & $c^1_3$ & $d^1_3$ & $\circ$\\
\hline
\multirow{2}{*}{$b^1_1$} & $a^1_3$ & $\circ$ & \multirow{2}{*}{$d^1_1$} & $c^1_3$ & $\bullet$ \\
& $b^1_2$ & $\bullet$ && $d^1_2$ & $\circ$\\
\hline
$b^1_2$ & $b^1_3$ & $\circ$ & $d^1_2$ & $d^1_3$ & $\bullet$\\
\hline
\multirow{2}{*}{$f_1$} & $f_2$ & $\bullet$ & \multirow{2}{*}{$e_1$} & $f_2$ & $\bullet$\\
& $e_2$ & $\circ$ && $e_2$ & $\circ$\\
\hline
$f_2$ & $g$ & $\circ$ & $e_2$ & $g$ & $\bullet$\\
\hline 
\end{tabular}
\endgroup
\caption{The list of Whitney disks contributing to differentials of $\CFK^{\infty}(K_1^{(3,4)})$. For example, there is a Whitney disk connecting $a^1_1$ to $a^1_2$ with one black dot.}
\label{differential-K_1^{3,4}}
\end{table}

The Alexander gradings of generators are given as in Table \ref{Alexander-K_1^{3,4}}. This list can be obtained from Whitney disks in Table \ref{differential-K_1^{3,4}}. For example, a Whitney disk connecting $a^1_1$ to $a^1_2$ with one black dot implies $A(a^1_1)-A(a^1_2)=-1$. In the same way, we can compare the Alexander gradings of all generators. Finally, based on the symmetry of the Alexander grading, we can determine the gradings. 

\begin{table}[h]
\begingroup
\renewcommand{\arraystretch}{1.3}
\begin{tabular}{|c|l|}
\hline
Alexander grading & generator\\
\hline
$4$ & $b^1_2$\\
\hline
$3$ & $a^1_2,\ b^1_1,\ b^1_3$\\
\hline
$2$ & $a^1_1,\ a^1_3$\\
\hline
$1$ & $f_2$\\
\hline
$0$ & $g,\ f_1,\ e_1$\\
\hline
$-1$ & $e_2$\\
\hline
$-2$ & $c^1_1,\ c^1_3$\\
\hline
$-3$ & $c^1_2,\ d^1_1,\ d^1_3$\\
\hline
$-4$ & $d^1_2$\\
\hline
\end{tabular}
\endgroup
\caption{The Alexander gradings of the generators of $\CFK^{\infty}(K_1^{(3,4)})$.}
\label{Alexander-K_1^{3,4}}
\end{table}

The left of Figure \ref{CFK-K_1+3,4} shows the description of $\CFK^{\infty}(K_1^{(3,4)})$. The generators $[f_1,i,i],[e_1,i,i]$ and $[g,i,i]$ lie on the diagonal line $j=i$, since their Alexander gradings are $0$. We can see that $\widehat{HF}(S^3)\cong\F_2$ is generated by $[b^1_1,0,3]+[a^1_2,0,3]$, so their Maslov gradings are set to be zero, and this fixes the Maslov grading of the other generators. Thus, the white vertices have the Maslov grading zero. 

Also, we apply a change of basis:
\begin{itemize}
\item $b^1_1\longrightarrow b^1_1+a^1_2=:B^1_1$,
\item $d^1_1\longrightarrow d^1_1+c^1_2=:D^1_1$,
\item $f_1\longrightarrow f_1+e_1=:F_1$,
\end{itemize}
where $(i,j)$ grading are preserved. This change of basis makes the differentials simpler. For example, 
\begin{align*}
\partial [a^1_1,i,j]&=([a^1_2,i-1,j]+[b^1_1,i-1,j])+([f_1,i,j-2]+[e_1,i,j-2])\\
&=[B^1_1,i-1,j]+[F_1,i,j-2],
\end{align*}
also,
\begin{align*}
\partial [B^1_1,i,j]&=\partial([b^1_1,i,j]+[a^1_2,i,j])\\
&=\partial[b^1_1,i,j]+\partial[a^1_2,i,j]\\
&=([a^1_3,i,j-1]+[b^1_2,i-1,j])+([a^1_3,i,j-1]+[b^1_2,i-1,j])\\
&=0.
\end{align*}
After the change of basis, we have $\CFK^{\infty}(K_1^{(3,4)})$ as in the right of Figure \ref{CFK-K_1+3,4}. 

\begin{figure}[h]
\centering
\begin{tabular}{cc}
\includegraphics[scale=0.3]{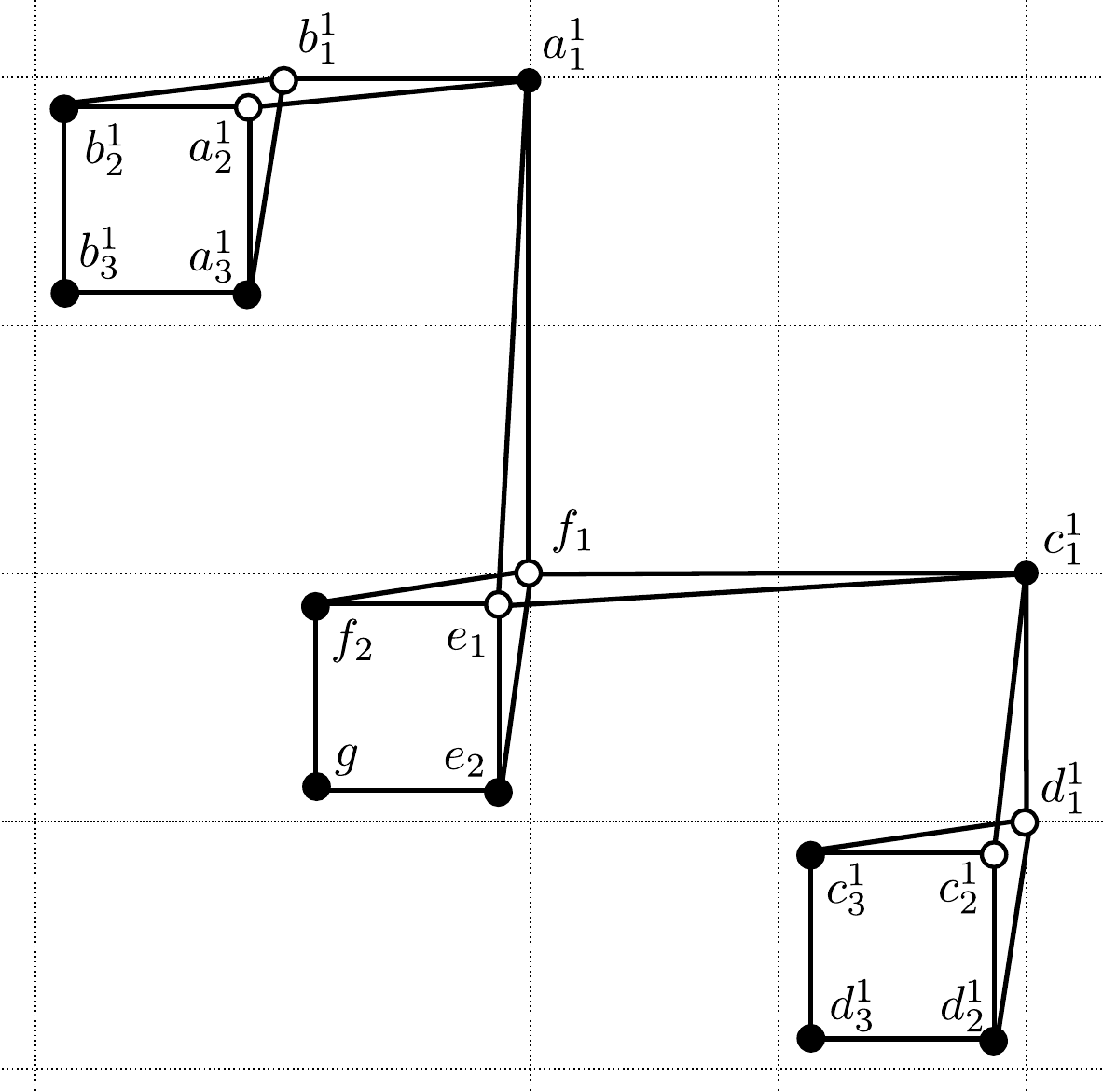}
&\includegraphics[scale=0.3]{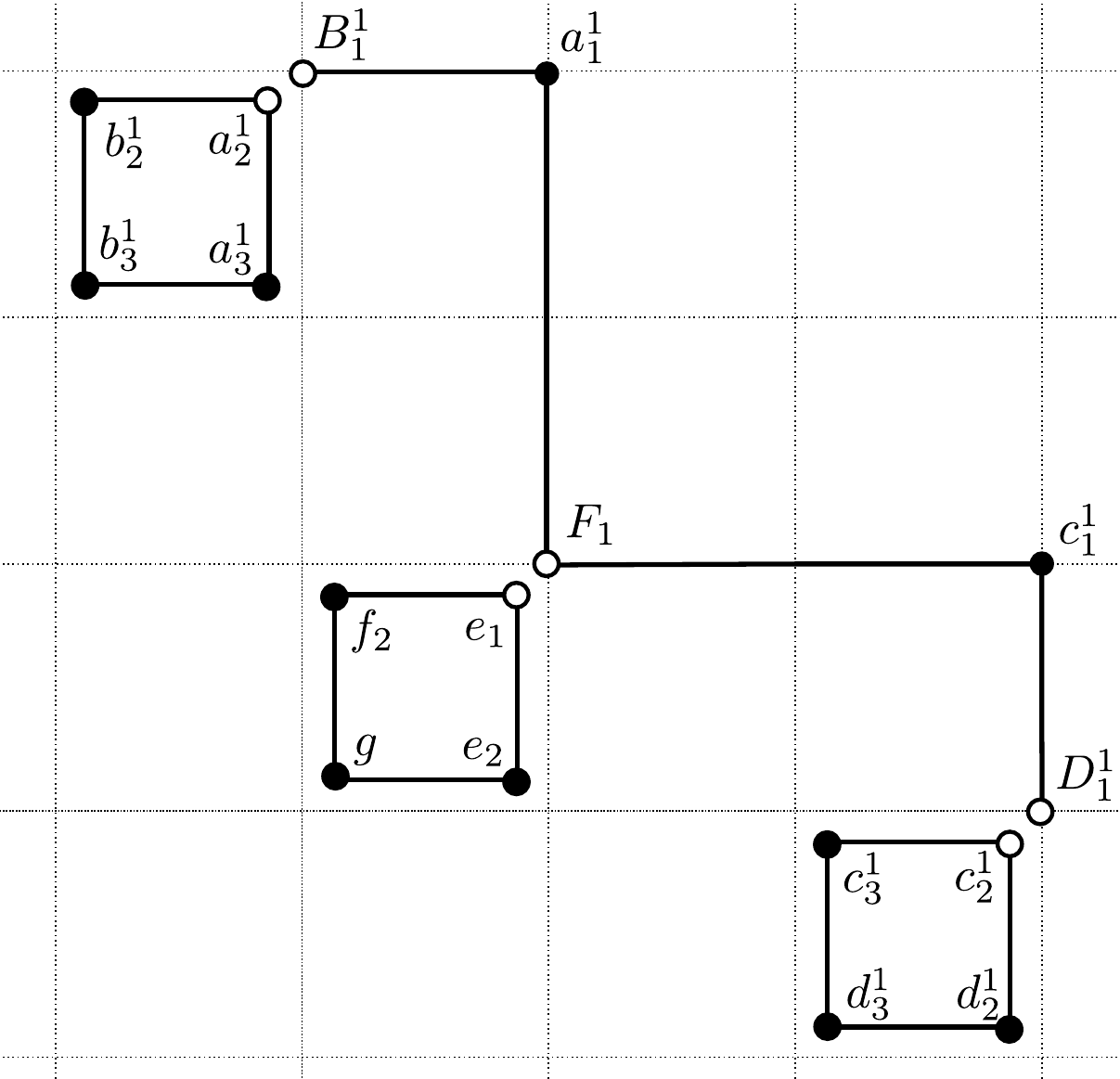}
\end{tabular}
\caption{The full knot Floer complex $\CFK^{\infty}(K_1^{(3,4)})$. The information of coordinates is omitted. Left is the complex derived from Figure \ref{1,1universal-K_1+3,4}. By applying the change of basis, right is obtained.}
\label{CFK-K_1+3,4}
\end{figure}

\subsection{$\CFK^{\infty}(K_2^{(3,7)})$ (the case $q=3\cdot 2+1,\ n=2$)}
The method for calculating $\CFK^{\infty}(K_2^{(3,7)})$ is the same as in the previous subsection.

Figure \ref{1,1universal-K_2+3,7} shows the universal cover of a $(1,1)$--diagram of $K_2^{(3,7)}$. The intersection points of $\tilde{\alpha}$ and $\tilde{\beta}$ are labeled by 
\begin{align*}
&b^2_1,b^2_2,\ldots,b^2_5,a^2_5,\ldots,a^2_2,a^2_1,b^1_1,\ldots,b^1_5,a^1_5,\ldots,a^1_1,f_1,f_2,\ldots,f_4,g,e_4,\ldots,e_2,e_1,\\
&c^1_1,c^1_2,\ldots,c^1_5,d^1_5,\ldots,d^1_2,d^1_1,c^2_1,\ldots,c^2_5,d^2_5,\ldots,d^2_1 
\end{align*}
along $\tilde{\beta}$ from left-upper to right-lower. 

\begin{figure}[h]
\centering
\begin{tabular}{c}
\includegraphics[scale=0.4]{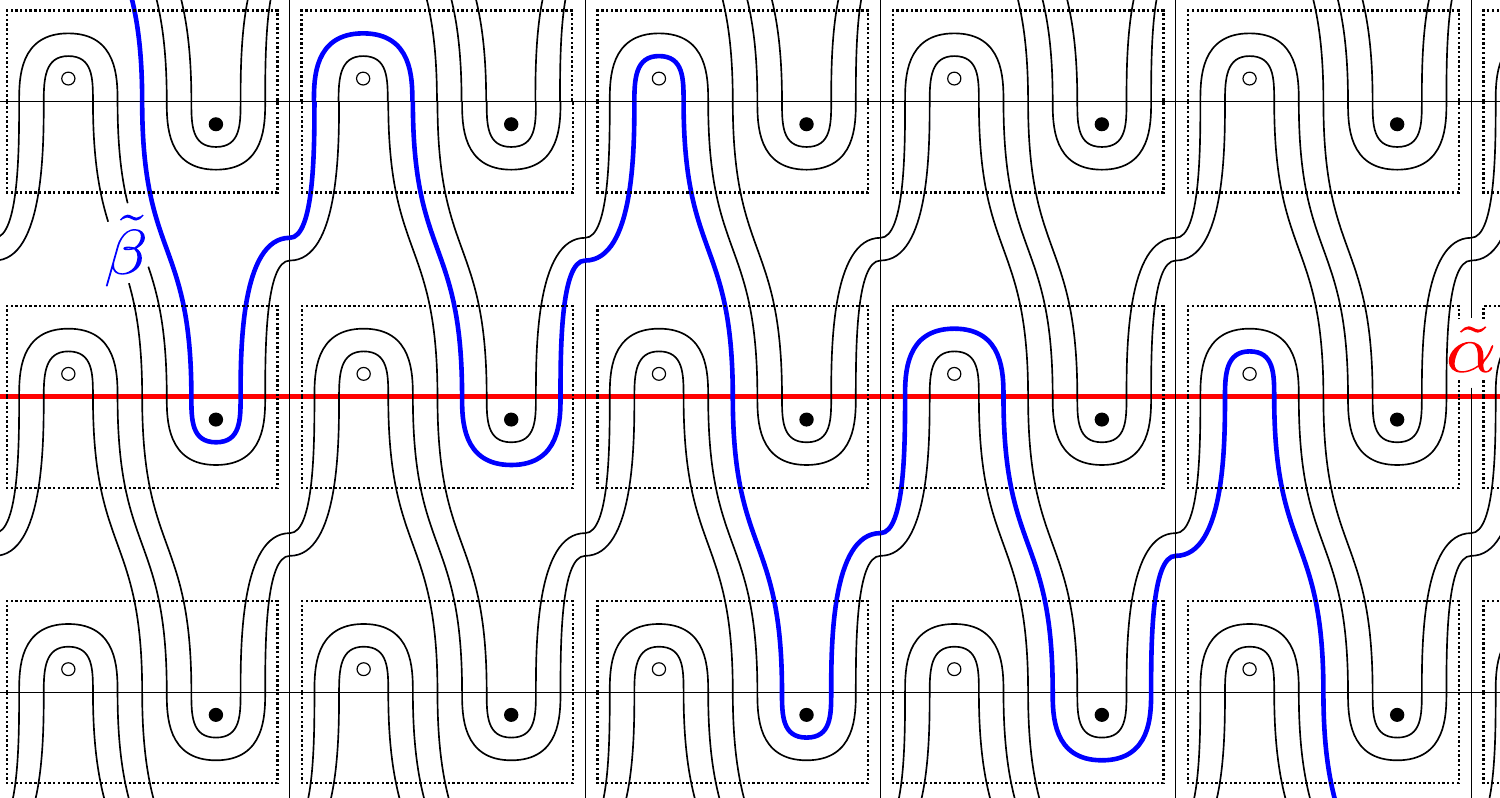}\\
$\downarrow$\\
\includegraphics[scale=0.4]{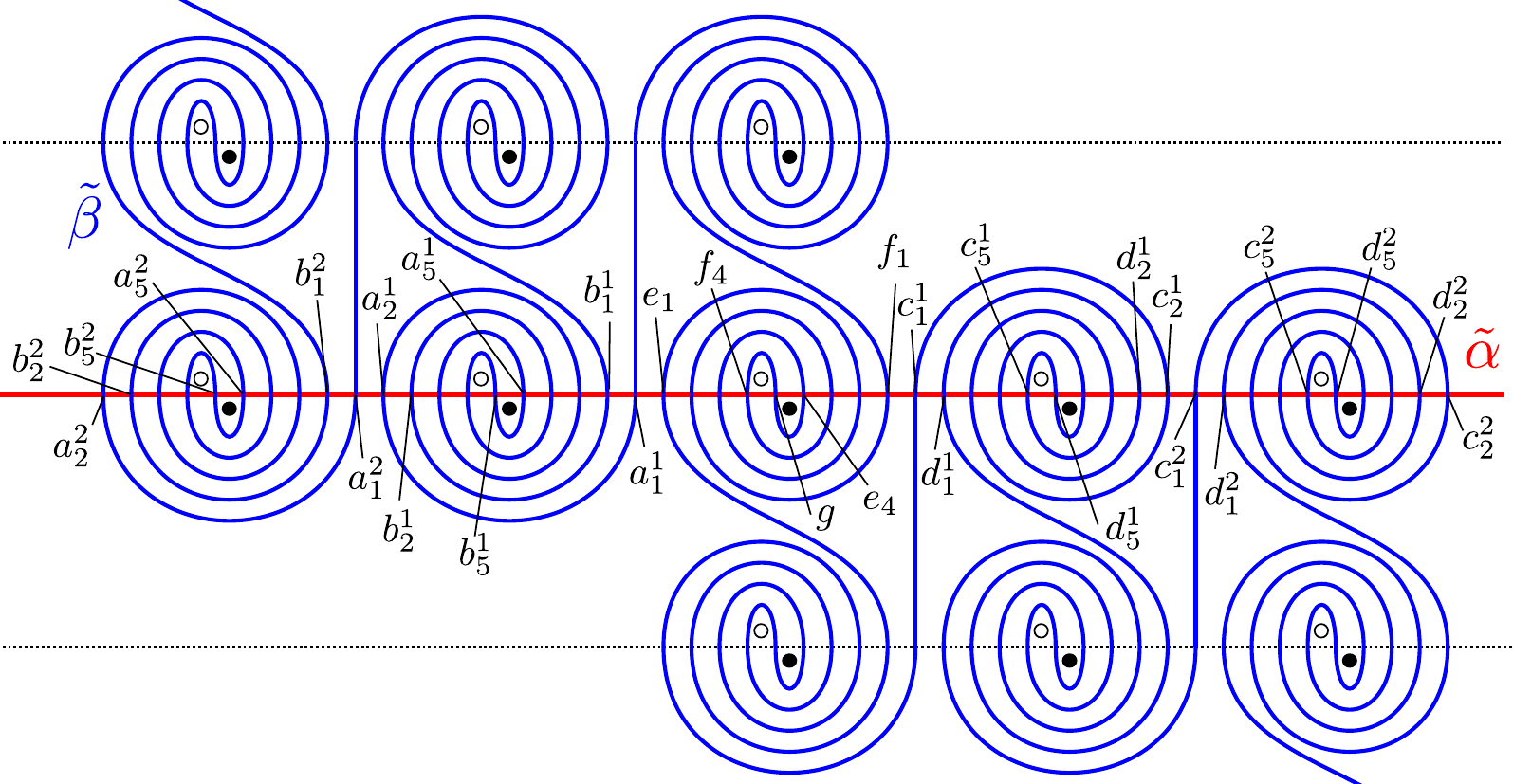}
\end{tabular}
\caption{The universal cover of a $(1,1)$--diagram of $K_2^{(3,7)}$. Bottom is obtained from top by twisting two points twice.}
\label{1,1universal-K_2+3,7}
\end{figure}

We list up all Whitney disks contributing to differentials of $\CFK^{\infty}(K_n^{(3,3k+1)})$ as in Table \ref{differential-K_2^{3,7}}. Also, the Alexander gradings of generators are given as in Table \ref{Alexander-K_2^{3,7}}.

\begin{table}[h]
\begingroup
\renewcommand{\arraystretch}{1.3}
\begin{tabular}{|c|c|c||c|c|c|}
\hline
from & to & base point & from & to & base point\\
\hline
\multirow{4}{*}{$a^1_1$} & $a^1_2$ & $\bullet$ & \multirow{4}{*}{$c^1_1$} & $c^1_2$ & $\circ$ \\
& $b^1_1$ & $\bullet$ && $d^1_1$ & $\circ$ \\ 
& $f_1$ & $\circ\circ$ && $f_1$ & $\bullet\bullet$\\
& $e_1$ & $\circ\circ$ && $e_1$ & $\bullet\bullet$\\
\hline
\multirow{4}{*}{$a^2_1$} & $a^2_2$ & $\bullet$ & \multirow{4}{*}{$c^2_1$} & $c^2_2$ & $\circ$ \\
& $b^2_1$ & $\bullet$ && $d^2_1$ & $\circ$ \\ 
& $a^1_2$ & $\circ\circ$ && $c^1_2$ & $\bullet\bullet$\\
& $b^1_1$ & $\circ\circ$ && $d^1_1$ & $\bullet\bullet$\\
\hline
\multirow{2}{*}{$a^i_3\ (i=1,2)$} & $a^i_4$ & $\bullet$ & \multirow{2}{*}{$c^i_3\ (i=1,2)$} & $c^i_4$ & $\circ$\\
& $b^i_3$ & $\bullet$ && $d^i_3$ & $\circ$\\
\hline
\multirow{2}{*}{\begin{tabular}{c}$a^i_{2j}$\\$(i=1,2,\ j=1,2)$\end{tabular}} & $a^i_{2j+1}$ & $\circ$ & \multirow{2}{*}{\begin{tabular}{c}$c^i_{2j}$\\$(i=1,2,\ j=1,2)$\end{tabular}} & $c^i_{2j+1}$ & $\bullet$\\
& $b^i_{2j}$ & $\bullet$ && $d^i_{2j}$ & $\circ$\\
\hline
$a^i_5\ (i=1,2)$ & $b^i_5$ & $\bullet$ & $c^i_5\ (i=1,2)$ & $d^i_5$ & $\circ$\\
\hline
\multirow{2}{*}{\begin{tabular}{c}$b^i_{2j-1}$\\$(i=1,2,\ j=1,2)$\end{tabular}} & $a^i_{2j+1}$ & $\circ$ & \multirow{2}{*}{\begin{tabular}{c}$d^i_{2j-1}$\\$(i=1,2,\ j=1,2)$\end{tabular}} & $c^i_{2j+1}$ & $\bullet$ \\
& $b^i_{2j}$ & $\bullet$ && $d^i_{2j}$ & $\circ$\\
\hline
\multirow{2}{*}{$b^i_2\ (i=1,2)$} & $a^i_4$ & $\circ$ & \multirow{2}{*}{$d^i_2\ (i=1,2)$} & $c^i_4$ & $\bullet$ \\
& $b^i_3$ & $\circ$ && $d^i_3$ & $\bullet$\\
\hline
$b^i_4\ (i=1,2)$ & $b^i_5$ & $\circ$ & $d^i_4\ (i=1,2)$ & $d^i_5$ & $\bullet$\\
\hline
\multirow{2}{*}{$f_{2j-1}\ (j=1,2)$} & $f_{2j}$ & $\bullet$ & \multirow{2}{*}{$e_{2j-1}\ (j=1,2)$} & $f_{2j}$ & $\bullet$\\
& $e_{2j}$ & $\circ$ && $e_{2j}$ & $\circ$\\
\hline 
\multirow{2}{*}{$f_2$} & $f_3$ & $\circ$ & \multirow{2}{*}{$e_2$} & $f_3$ & $\bullet$\\
& $e_3$ & $\circ$ && $e_3$ & $\bullet$\\
\hline
$f_4$ & $g$ & $\circ$ & $e_4$ & $g$ & $\bullet$\\
\hline
\end{tabular}
\endgroup
\caption{The list of Whitney disks contributing to differentials of $\CFK^{\infty}(K_2^{(3,7)})$.}
\label{differential-K_2^{3,7}}
\end{table}

\begin{table}[h]
\begingroup
\renewcommand{\arraystretch}{1.2}
\begin{tabular}{|c|c||c|c|}
\hline
Alexander grading & generators & Alexander grading & generators\\
\hline
$7$ & $b^2_2,\ b^2_4$ & $-7$ & $d^2_2,\ d^2_4$\\
\hline
$6$ & $a^2_2,\ a^2_4,\ b^2_1,\ b^2_3,\ b^2_5$ & $-6$ & $c^2_2,\ c^2_4,\ d^2_1,\ d^2_3,\ d^2_5$\\
\hline
$5$ & $a^2_1,\ a^2_3,\ a^2_5$ & $-5$ & $c^2_1,\ c^2_3,\ c^2_5$\\
\hline
$4$ & $b^1_2,\ b^1_4$ & $-4$ & $d^1_2,\ d^1_4$\\
\hline
$3$ & $a^1_2,\ a^1_4,\ b^1_1,\ b^1_3,\ b^1_5$ & $-3$ & $c^1_2,\ c^1_4,\ d^1_1,\ d^1_3,\ d^1_5$\\
\hline
$2$ & $a^1_1,\ a^1_3,\ a^1_5$ & $-2$ & $c^1_1,\ c^1_3,\ c^1_5$\\
\hline
$1$ & $f_2,\ f_4$ & $-1$ & $e_2,\ e_4$\\
\hline
$0$ & $g,\ f_1,\ f_3,\ e_1,\ e_3$ & & \\
\hline
\end{tabular}
\endgroup
\caption{The Alexander gradings of the generators of $\CFK^{\infty}(K_2^{(3,7)})$.} 
\label{Alexander-K_2^{3,7}}
\end{table}

By applying a change of basis: 
\begin{itemize}
\item $b^1_1\longrightarrow b^1_1+a^1_2=: B^1_1$, $b^2_1\longrightarrow b^2_1+a^2_2=: B^2_1$,
\item $b^1_3\longrightarrow b^1_3+a^1_4=: B^1_3$, $b^2_3\longrightarrow b^2_3+a^2_4=: B^2_3$,
\item $d^1_1\longrightarrow d^1_1+c^1_2=: D^1_1$, $d^2_1\longrightarrow d^2_1+c^2_2=: D^2_1$,
\item $d^1_3\longrightarrow d^1_3+c^1_4=: D^1_3$, $d^2_3\longrightarrow d^2_3+c^2_4=: D^2_3$,
\item $f_1\longrightarrow f_1+e_1=: F_1$, $f_3\longrightarrow f_3+e_3=: F_3$,
\end{itemize}
we obtain $\CFK^{\infty}(K_2^{(3,7)})$ as in Figure \ref{CFK-K_2+3,7}. 

\begin{figure}[h]
\centering
\includegraphics[scale=0.45]{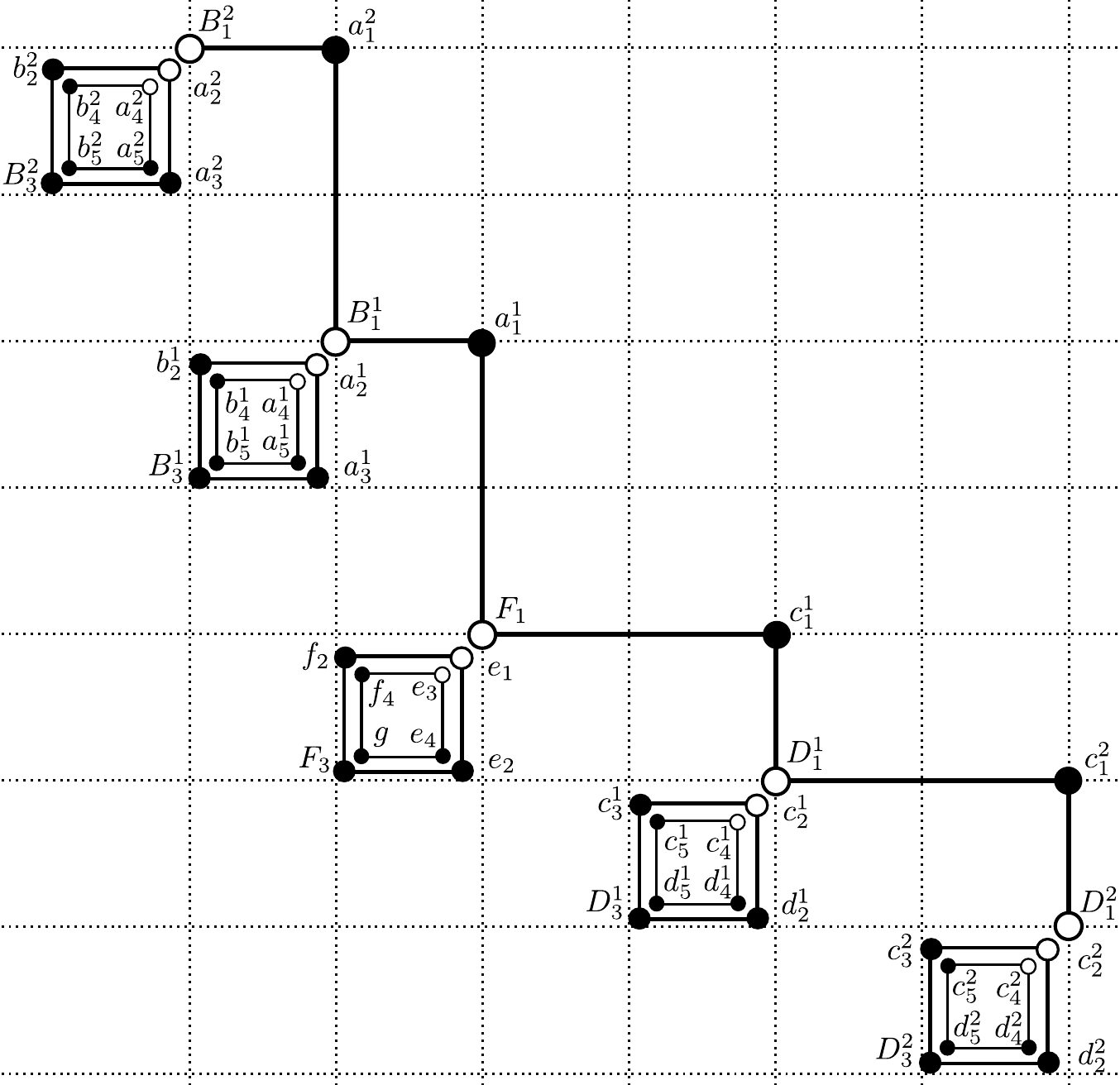}
\caption{The full knot Floer complex $\CFK^{\infty}(K_2^{(3,7)})$.}
\label{CFK-K_2+3,7}
\end{figure}

\subsection{The general case $q=3k+1$}
Figure \ref{1,1universal-K_n3,3k+1} shows the universal cover of a $(1,1)$--diagram of $K_n^{(3,3k+1)}$, and Figures \ref{1,1universal-K_n3,3k+1-localleftright} and \ref{1,1universal-K_n3,3k+1-localcenter} represent enlarged figures of local regions in Figure \ref{1,1universal-K_n3,3k+1}. 

\begin{figure}[h]
\hspace*{-1.5cm}
\begin{tabular}{c}
\includegraphics[scale=0.5]{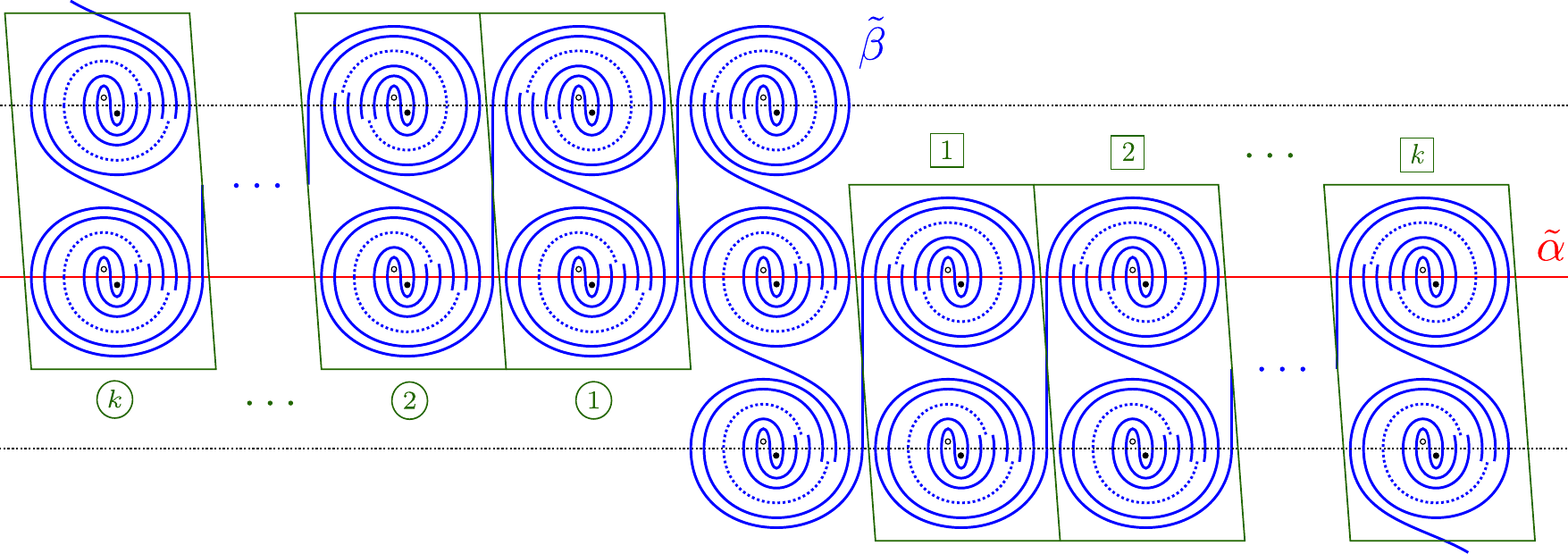}
\end{tabular}
\caption{The universal cover of a $(1,1)$--diagram of $K_n^{(3,3k+1)}$.}
\label{1,1universal-K_n3,3k+1}
\end{figure}

\begin{figure}[h]
\centering
\begin{tabular}{c|c}
\includegraphics[scale=0.385]{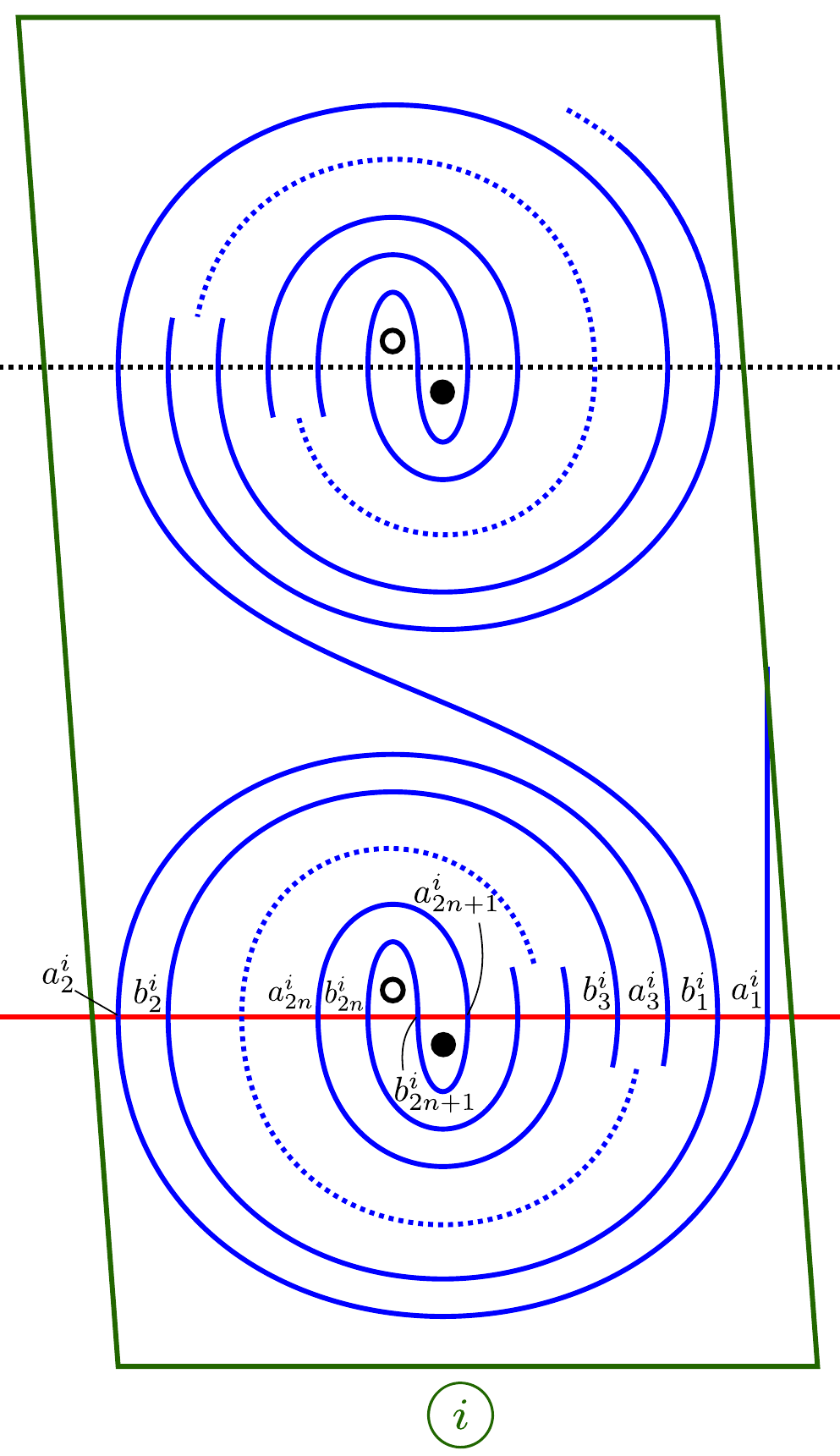}
&\includegraphics[scale=0.385]{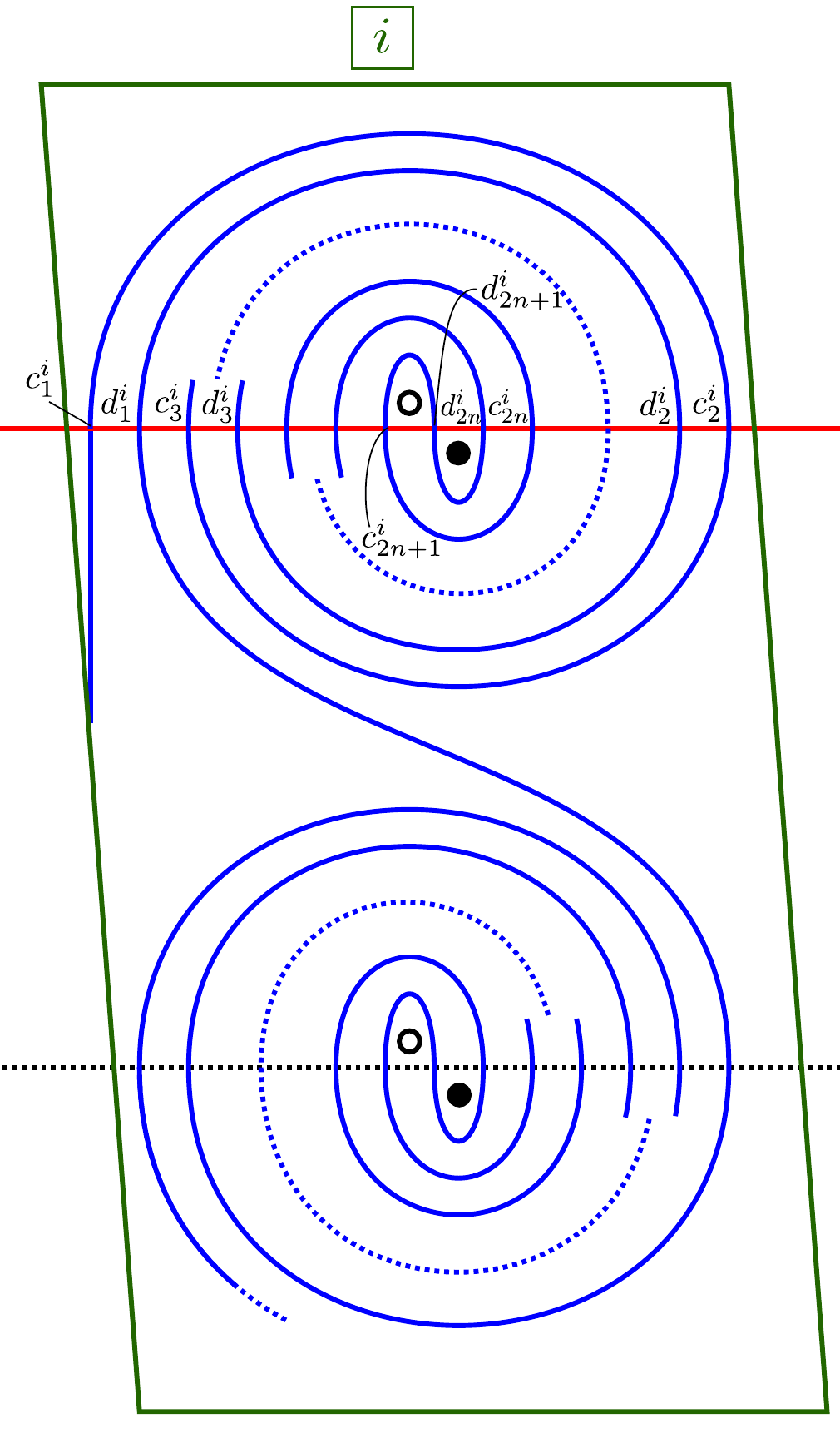}
\end{tabular}
\caption{Left (resp.right) is the $i$-th (green) box from the middle to the left (resp. right) in Figure \ref{1,1universal-K_n3,3k+1}. }
\label{1,1universal-K_n3,3k+1-localleftright}
\end{figure}

\begin{figure}[h]
\centering
\includegraphics[scale=0.4]{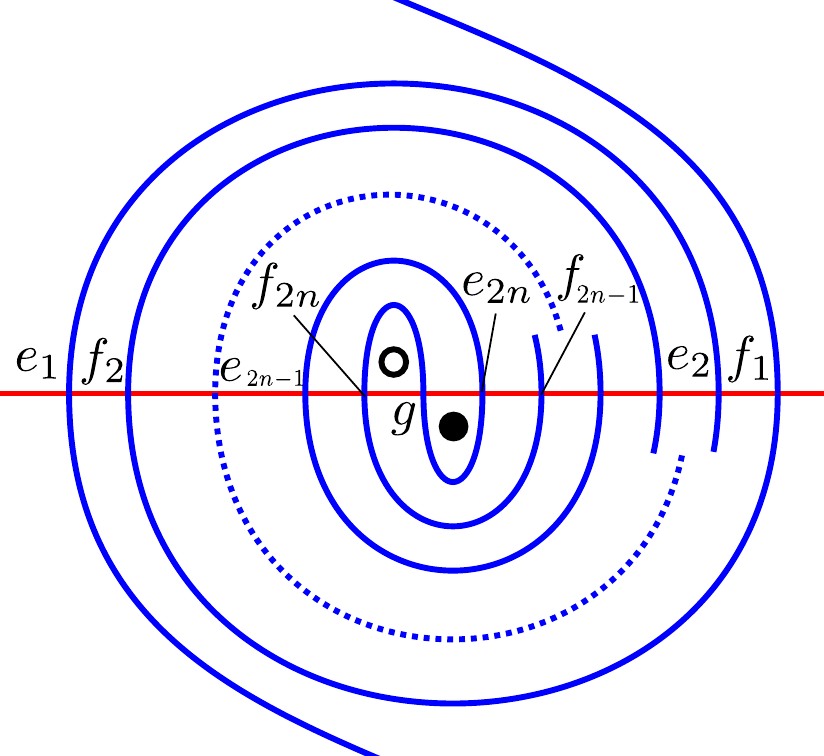}
\caption{The enlarged figure of the middle part in Figure \ref{1,1universal-K_n3,3k+1}. }
\label{1,1universal-K_n3,3k+1-localcenter}
\end{figure}

The intersection points of $\tilde{\alpha}$ and $\tilde{\beta}$ are labeled by 
\begin{align*}
&b^k_1,b^k_2,\ldots,b^k_{2n+1},a^k_{2n+1},a^k_{2n},\ldots,a^k_1,\\
&b^{k-1}_1,b^{k-1}_2,\ldots,b^{k-1}_{2n+1},a^{k-1}_{2n+1},a^{k-1}_{2n},\ldots,a^{k-1}_1,\\
&\vdots\\
&b^1_1,b^1_2,\ldots,b^1_{2n+1},a^1_{2n+1},a^1_{2n},\ldots,a^1_1,\\
&f_1,f_2,\ldots,f_{2n},g,e_{2n},e_{2n-1},\ldots,e_1\\
&c^1_1,c^1_2,\ldots,c^1_{2n+1},d^1_{2n+1},d^1_{2n},\ldots,d^1_1,\\
&c^2_1,c^2_2,\ldots,c^2_{2n+1},d^2_{2n+1},d^2_{2n},\ldots,d^1_1,\\
&\vdots\\
&c^k_1,c^k_2,\ldots,c^k_{2n+1},d^k_{2n+1},d^k_{2n},\ldots,d^k_1
\end{align*}
along $\tilde{\beta}$ from left-upper to right-lower (see Figures \ref{1,1universal-K_n3,3k+2-localleftright}, \ref{1,1universal-K_n3,3k+2-localcenter}).

In Table \ref{differential-Alexander-K_n^{3,3k+2}}, 
the Whitney disks contributing to differentials of $\CFK^{\infty}(K_n^{(3,3k+1)})$ and the Alexander gradings of generators are given as in Table \ref{differential-Alexander-K_n^{3,3k+1}}. Thus, we can see that $[g,i,i],\ [f_{{\rm odd}},i,i],\ [e_{{\rm odd}},i,i]$ lie on the line $j=i$. (See the caption of Table \ref{differential-Alexander-K_n^{3,3k+1}} for the meaning of subscripts `odd', `even'.)

\begin{table}[h]
\hspace*{-2cm}
\begingroup
\renewcommand{\arraystretch}{1.3}
\begin{tabular}{|c|c|c||c|c|c|}
\hline
from & to & base point & from & to & base point\\
\hline
\multirow{4}{*}{$a^1_1$} & $a^1_2$ & $\bullet$ & \multirow{4}{*}{$c^1_1$} & $c^1_2$ & $\circ$ \\
& $b^1_1$ & $\bullet$ && $d^1_1$ & $\circ$ \\ 
& $f_1$ & $\circ\circ$ && $f_1$ & $\bullet\bullet$\\
& $e_1$ & $\circ\circ$ && $e_1$ & $\bullet\bullet$\\
\hline
\multirow{4}{*}{$a^i_1\ (i=2,\ldots,k)$} & $a^i_2$ & $\bullet$ & \multirow{4}{*}{$c^i_1\ (i=2,\ldots,k)$} & $c^i_2$ & $\circ$ \\
& $b^i_1$ & $\bullet$ && $d^i_1$ & $\circ$ \\ 
& $a^{i-1}_2$ & $\circ\circ$ && $c^{i-1}_2$ & $\bullet\bullet$\\
& $b^{i-1}_1$ & $\circ\circ$ && $d^{i-1}_1$ & $\bullet\bullet$\\
\hline
\multirow{2}{*}{\begin{tabular}{c}$a^i_{2j-1}$\\ $(i=1,\ldots,k,\ j=2,\ldots,n)$\end{tabular}} & $a^i_{2j}$ & $\bullet$ & \multirow{2}{*}{\begin{tabular}{c}$c^i_{2j-1}$\\ $(i=1,\ldots,k,\ j=2,\ldots,n)$\end{tabular}} & $c^i_{2j}$ & $\circ$\\
& $b^i_{2j-1}$ & $\bullet$ && $d^i_{2j-1}$ & $\circ$\\
\hline
\multirow{2}{*}{\begin{tabular}{c}$a^i_{2j}$\\$(i=1,\ldots,k,\ j=1,\ldots,n)$\end{tabular}} & $a^i_{2j+1}$ & $\circ$ & \multirow{2}{*}{\begin{tabular}{c}$c^i_{2j}$\\$(i=1,\ldots,k,\ j=1,\ldots,n)$\end{tabular}} & $c^i_{2j+1}$ & $\bullet$\\
& $b^i_{2j}$ & $\bullet$ && $d^i_{2j}$ & $\circ$\\
\hline
$a^i_{2n+1}\ (i=1,\ldots,k)$ & $b^i_{2n+1}$ & $\bullet$ & $c^i_{2n+1}\ (i=1,\ldots,k)$ & $d^i_{2n+1}$ & $\circ$\\
\hline
\multirow{2}{*}{\begin{tabular}{c}$b^i_{2j-1}$\\$(i=1,\ldots,k,\ j=1,\ldots,n)$\end{tabular}} & $a^i_{2j+1}$ & $\circ$ & \multirow{2}{*}{\begin{tabular}{c}$d^i_{2j-1}$\\$(i=1,\ldots,k,\ j=1,\ldots,n)$\end{tabular}} & $c^i_{2j+1}$ & $\bullet$ \\
& $b^i_{2j}$ & $\bullet$ && $d^i_{2j}$ & $\circ$\\
\hline
\multirow{2}{*}{\begin{tabular}{c}$b^i_{2j}$\\$(i=1,\ldots,k,\ j=1,\ldots,n-1)$\end{tabular}} & $a^i_{2j+2}$ & $\circ$ & \multirow{2}{*}{\begin{tabular}{c}$d^i_{2j}$\\$(i=1,\ldots,k,\ j=1,\ldots,n-1)$\end{tabular}} & $c^i_{2j+2}$ & $\bullet$ \\
& $b^i_{2j+1}$ & $\circ$ && $d^i_{2j+1}$ & $\bullet$\\
\hline
$b^i_{2n}\ (i=1,\ldots,k)$ & $b^i_{2n+1}$ & $\circ$ & $d^i_{2n}\ (i=1,\ldots,k)$ & $d^i_{2n+1}$ & $\bullet$\\
\hline
\multirow{2}{*}{$f_{2j-1}\ (j=1,\ldots,n)$} & $f_{2j}$ & $\bullet$ & \multirow{2}{*}{$e_{2j-1}\ (j=1,\ldots,n)$} & $f_{2j}$ & $\bullet$\\
& $e_{2j}$ & $\circ$ && $e_{2j}$ & $\circ$\\
\hline 
\multirow{2}{*}{$f_{2j}\ (j=1,\ldots,n-1)$} & $f_{2j+1}$ & $\circ$ & \multirow{2}{*}{$e_{2j}\ (j=1,\ldots,n-1)$} & $f_{2j+1}$ & $\bullet$\\
& $e_{2j+1}$ & $\circ$ && $e_{2j+1}$ & $\bullet$\\
\hline
$f_{2n}$ & $g$ & $\circ$ & $e_{2n}$ & $g$ & $\bullet$\\
\hline
\end{tabular}
\endgroup

\vspace*{5mm}

\begingroup
\renewcommand{\arraystretch}{1.3}
\begin{tabular}{|c|c||c|c|}
\hline
Alexander grading & generators & Alexander grading & generators\\
\hline
$3k+1$ & $b^k_{{\rm even}}$ & $-3k-1$ & $d^k_{{\rm even}}$\\
\hline
$3k$ & $a^k_{{\rm even}},\ b^k_{{\rm odd}}$ & $-3k$ & $c^k_{{\rm even}},\ d^k_{{\rm odd}}$\\
\hline
$3k-1$ & $a^k_{{\rm odd}}$ & $-3k+1$ & $c^k_{{\rm odd}}$\\
\hline
$3k-2$ & $b^{k-1}_{{\rm even}}$ & $-3k+2$ & $d^{k-1}_{{\rm even}}$\\
\hline
$\vdots$ & $\vdots$ & $\vdots$ & $\vdots$\\
\hline
$2$ & $a^1_{{\rm odd}}$ & $-2$ & $c^1_{{\rm odd}}$\\
\hline
$1$ & $f_{{\rm even}}$ & $-1$ & $e_{{\rm even}}$\\
\hline
$0$ & $g,\ f_{{\rm odd}},\ e_{{\rm odd}}$ & & \\
\hline
\end{tabular}
\endgroup
\caption{(Top) The list of Whitney disks contributing to differentials of $\CFK^{\infty}(K_n^{(3,3k+1)})$. (Bottom) The Alexander gradings of the generators of $\CFK^{\infty}(K_n^{(3,3k+1)})$. The subscript {\it even} indicates $2j\ (j=1,\ldots,n)$, for example, $b^k_{2j}\ (j=1,\ldots,n)$ has the Alexander grading $3k+1$. The subscript {\it odd} similarly indicates $2j-1\ (j=1,\ldots,n+1)$.} 
\label{differential-Alexander-K_n^{3,3k+1}}
\end{table}

Moreover, we apply a change of basis: 
\begin{itemize}
\item $b^i_{2j-1}\longrightarrow b^i_{2j-1}+a^i_{2j}=: B^i_{2j-1}$ for $i=1,\ldots,k$ and $j=1,\ldots,n$,
\item $d^i_{2j-1}\longrightarrow d^i_{2j-1}+c^i_{2j}=: D^i_{2j-1}$ for $i=1,\ldots,k$ and $j=1,\ldots,n$,
\item $f_{2j-1}\longrightarrow f_{2j-1}+e_{2j-1}=: F_{2j-1}$ for $j=1,\ldots,n$.
\end{itemize}
Then, we have $\CFK^{\infty}(K_n^{(3,3k+1)})$ as in Figure \ref{CFK-K_n3,3k+1-precise}. 

\begin{figure}[h]
\centering
\begin{tabular}{c}
\includegraphics[scale=0.7]{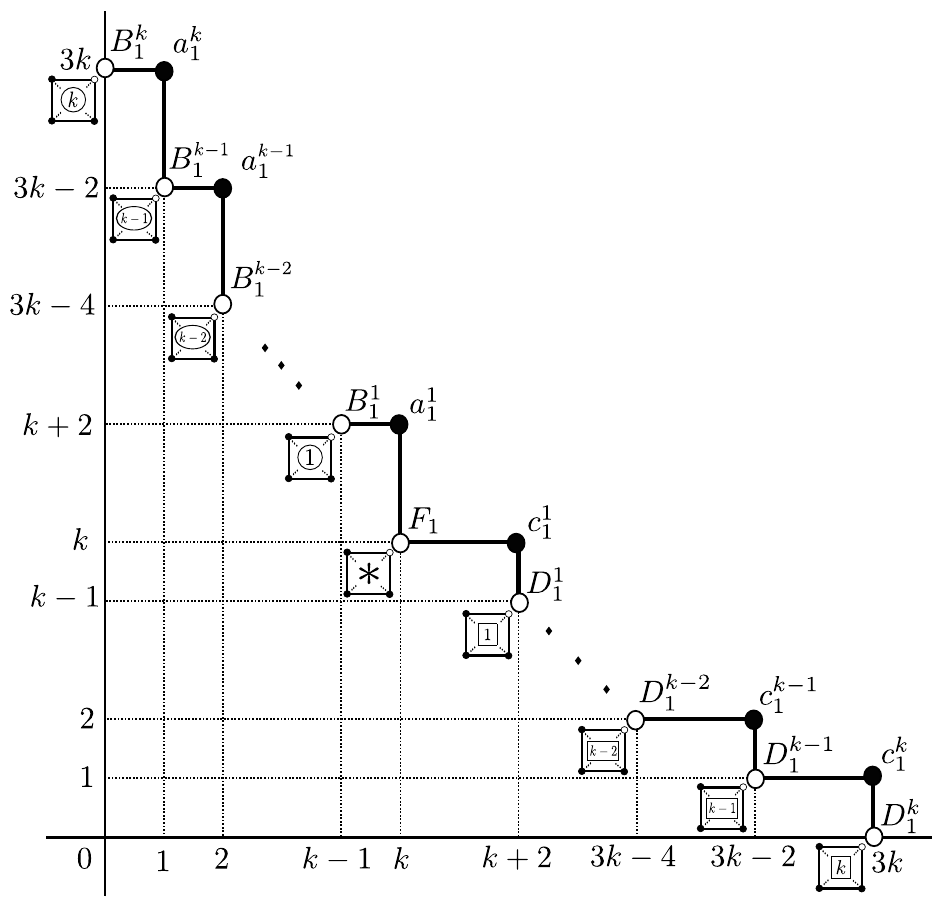}\\
\includegraphics[scale=0.4]{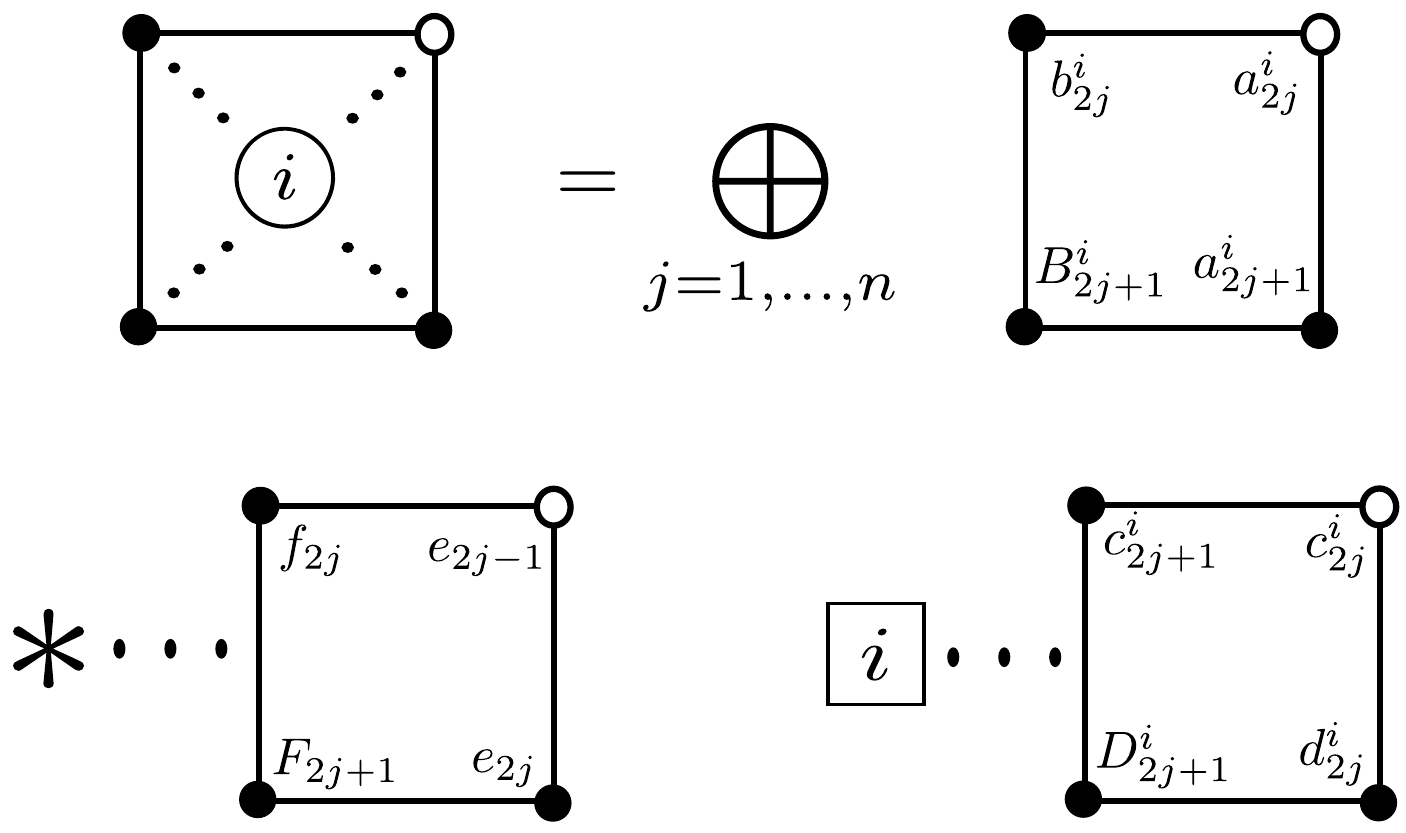}
\end{tabular}
\caption{The full knot Floer complex $\CFK^{\infty}(K_n^{(3,3k+1)})$. The box complex with \textcircled{\scriptsize $i$} represents the direct sum of $n$ box complexes consisting of $a^i_{2j}, a^i_{2j+1},b^i_{2j},B^i_{2j+1}\ (j=1,\ldots,n)$, where $B^j_{2n+1}:=b^i_{2n+1}$. The box with $*$ or a boxed number similarly represents the direct sum of $n$ box complexes (where $D^i_{2n+1}:=d^i_{2n+1}$ and $F_{2n+1}:=g$). Also, we can see that white vertices have the Maslov grading zero.}
\label{CFK-K_n3,3k+1-precise}
\end{figure}

\section{The case $q=3k+2$}\label{calCFK3k+2}
In this section, we give a $(1,1)$--diagram of $K_n^{(3,3k+2)}$ and calculate $\CFK^{\infty}(K_n^{3,3k+2})$. The procedure to obtain them is similar to those in Sections \ref{(1,1)diagram} and \ref{calCFK3k+1}. 

Figure \ref{1,1diagram-K_n3,3k+2} shows that how to get a $(1,1)$--diagram of $K_n^{(3,3k+2)}$, see also Figure \ref{1,1diagram-K_n3,3k+1-1-4}, \ref{1,1diagram-K_n3,3k+1-5-6} and \ref{1,1diagram-K_n3,3k+1-afterDhentwists} in Section \ref{(1,1)diagram}. Moreover, the universal cover of this $(1,1)$--diagram is given as in Figure \ref{1,1universal-K_n3,3k+2}. 

The intersection points of $\tilde{\alpha}$ and $\tilde{\beta}$ are labeled by 
\begin{align*}
&a^k_1,a^k_2,\ldots,a^k_{2n+1},b^k_{2n+1},\ldots,b^k_1,\\
&\vdots\\
&a^1_1,a^1_2,\ldots,a^1_{2n+1},b^1_{2n+1},\ldots,b^1_1,\\
&e_1,e_2,\ldots,e_{2n+1},g,f_{2n+1},\ldots,f_1\\
&d^1_1,d^1_2,\ldots,d^1_{2n+1},c^1_{2n+1},\ldots,c^1_1,\\
&\vdots\\
&d^k_1,d^k_2,\ldots,d^k_{2n+1},c^k_{2n+1},\ldots,c^k_1
\end{align*}
along $\tilde{\beta}$ from left-upper to right-lower (see Figures \ref{1,1universal-K_n3,3k+1-localleftright}, \ref{1,1universal-K_n3,3k+1-localcenter}). Then, Table \ref{differential-Alexander-K_n^{3,3k+2}} shows the Whitney disks contributing to differentials of $\CFK^{\infty}(K_n^{(3,3k+2)})$ and the Alexander gradings of generators.

\begin{figure}[h]
\begin{tabular}{ccc}
\begin{minipage}{.50\textwidth}
\centering
\includegraphics[scale=0.24]{1,1diagram-K_n3,3k+1-1.pdf}
\end{minipage}
$\longrightarrow$
\begin{minipage}{.50\textwidth}
\centering
\includegraphics[scale=0.24]{1,1diagram-K_n3,3k+1-3.pdf}
\end{minipage}\\
$\swarrow$\\
\begin{minipage}{.50\textwidth}
\hspace*{-3mm}
\includegraphics[scale=0.24]{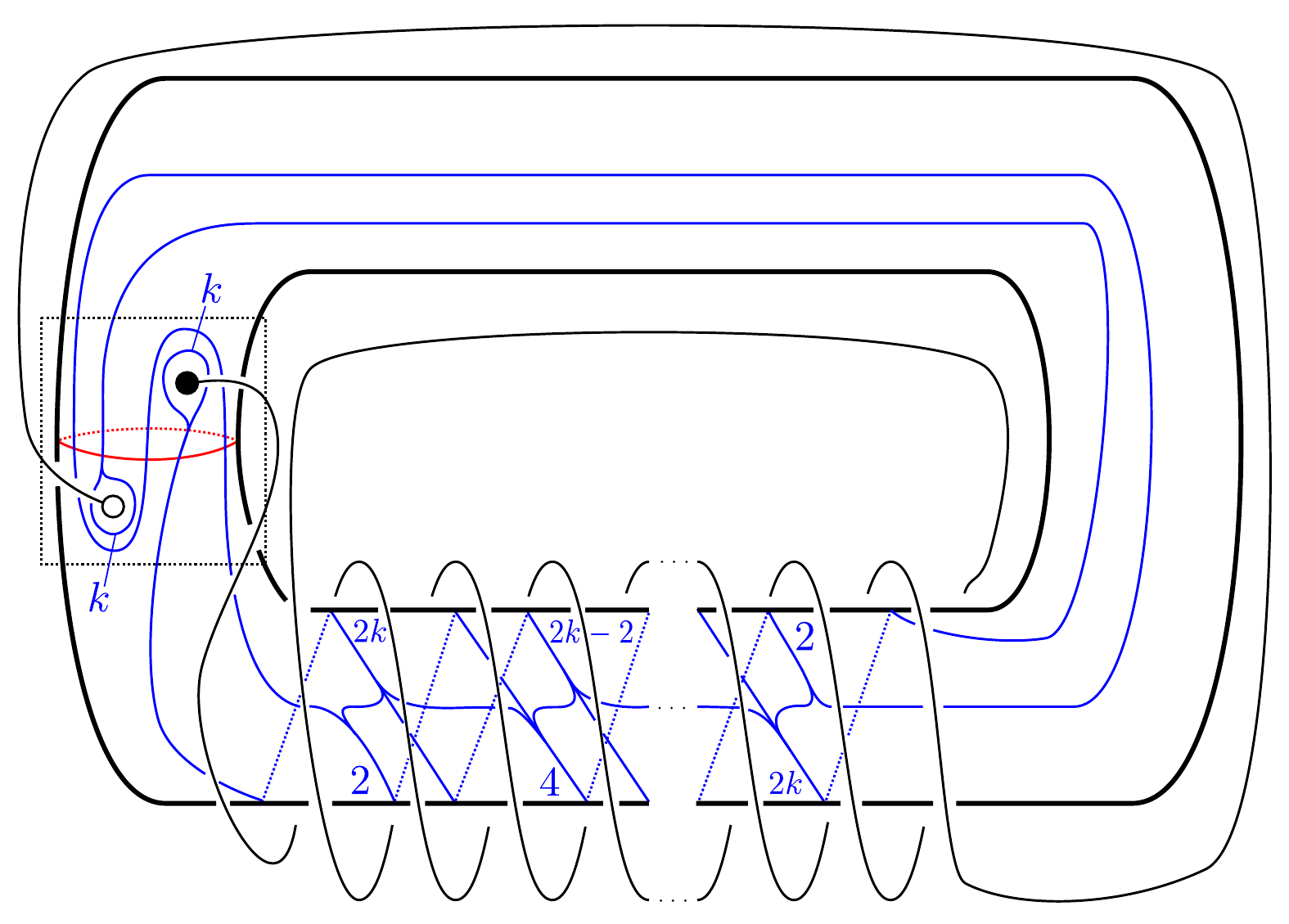}
\end{minipage}
$\longrightarrow$
\begin{minipage}{.50\textwidth}
\centering
\includegraphics[scale=0.24]{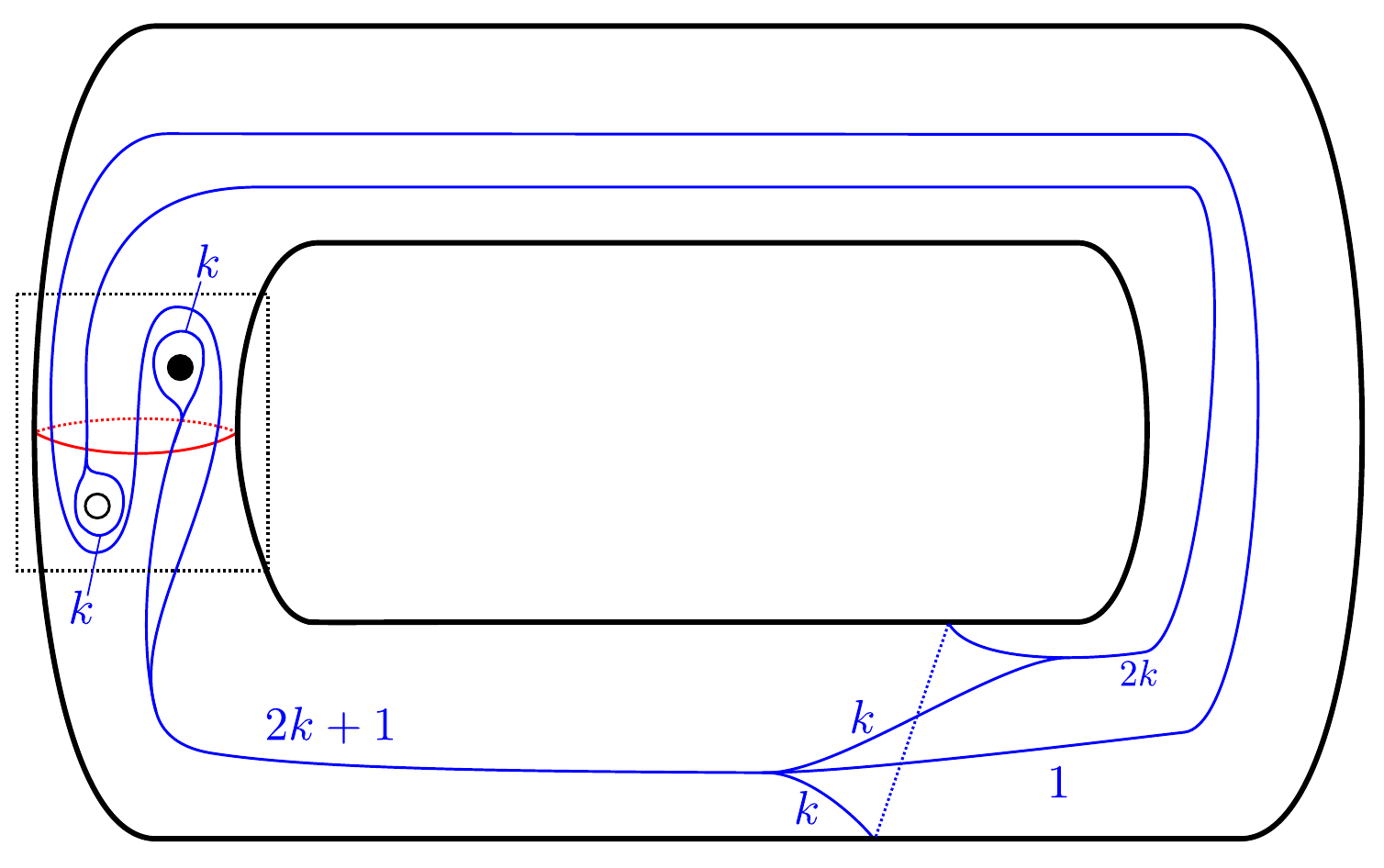}
\end{minipage}\\
\end{tabular}
\caption{How to move the arc $t$ to get a $(1,1)$--diagram of $K_n^{(3,3k+2)}$. The first move is exactly the same as the case $q=3k+1$ (the move from the top left to the bottom left in Figure \ref{1,1diagram-K_n3,3k+1-1-4}). 
The bottom left is a $(1,1)$--diagram of $K_0^{(3,3k+2)}=T(3,3k+2)$. Note that it matches the $(1,1)$--diagram of $T(3,3k+1)$ (the bottom right in Figure \ref{1,1diagram-K_n3,3k+1-1-4}) except for the region indicated the dotted box. 
The bottom right is after left handed Dehn twists along the $\alpha$ curve. 
As before, to obtain a $(1,1)$--diagram of $K_n^{(3,3k+2)}$, we need to add $n$-twists counter-clockwisely inside the dotted box.}
\label{1,1diagram-K_n3,3k+2}
\end{figure}

\begin{figure}[h]
\hspace*{-0.5cm}
\begin{tabular}{c}
\includegraphics[scale=0.45]{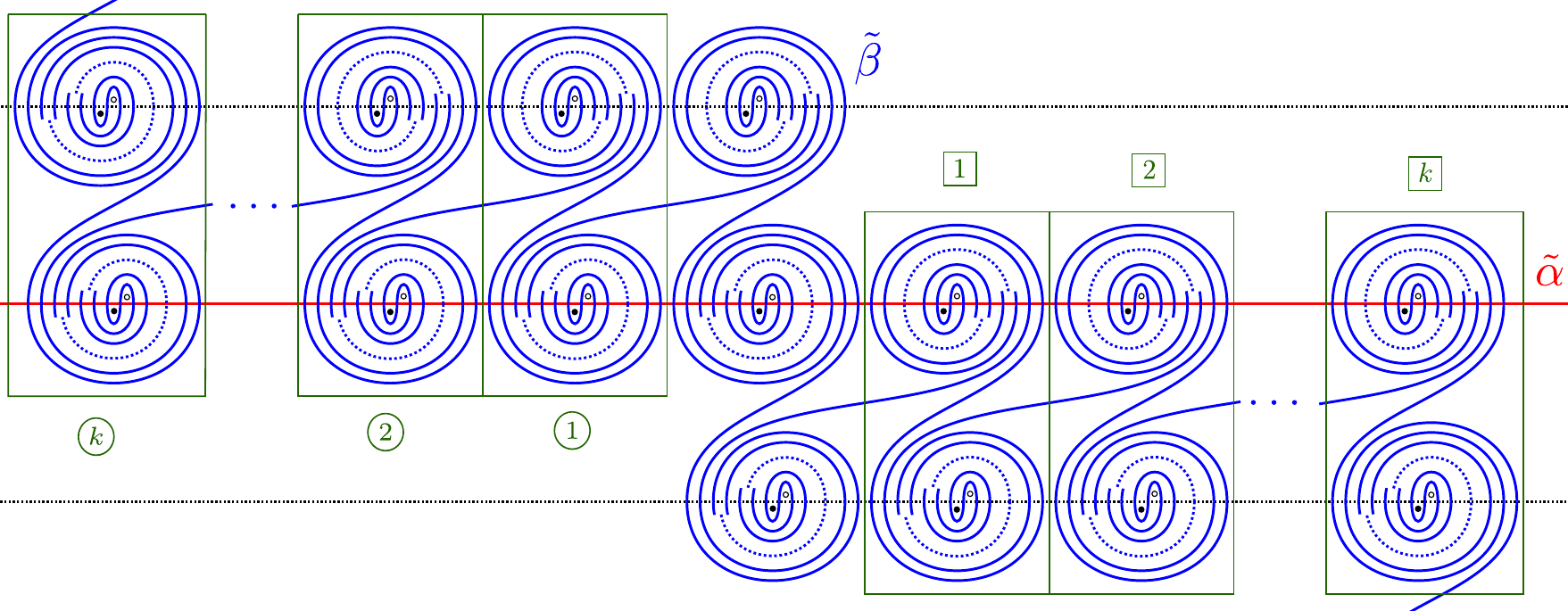}
\end{tabular}
\caption{The universal cover of a $(1,1)$--diagram of $K_n^{(3,3k+2)}$.}
\label{1,1universal-K_n3,3k+2}
\end{figure}

\begin{figure}[h]
\centering
\begin{tabular}{c|c}
\includegraphics[scale=0.385]{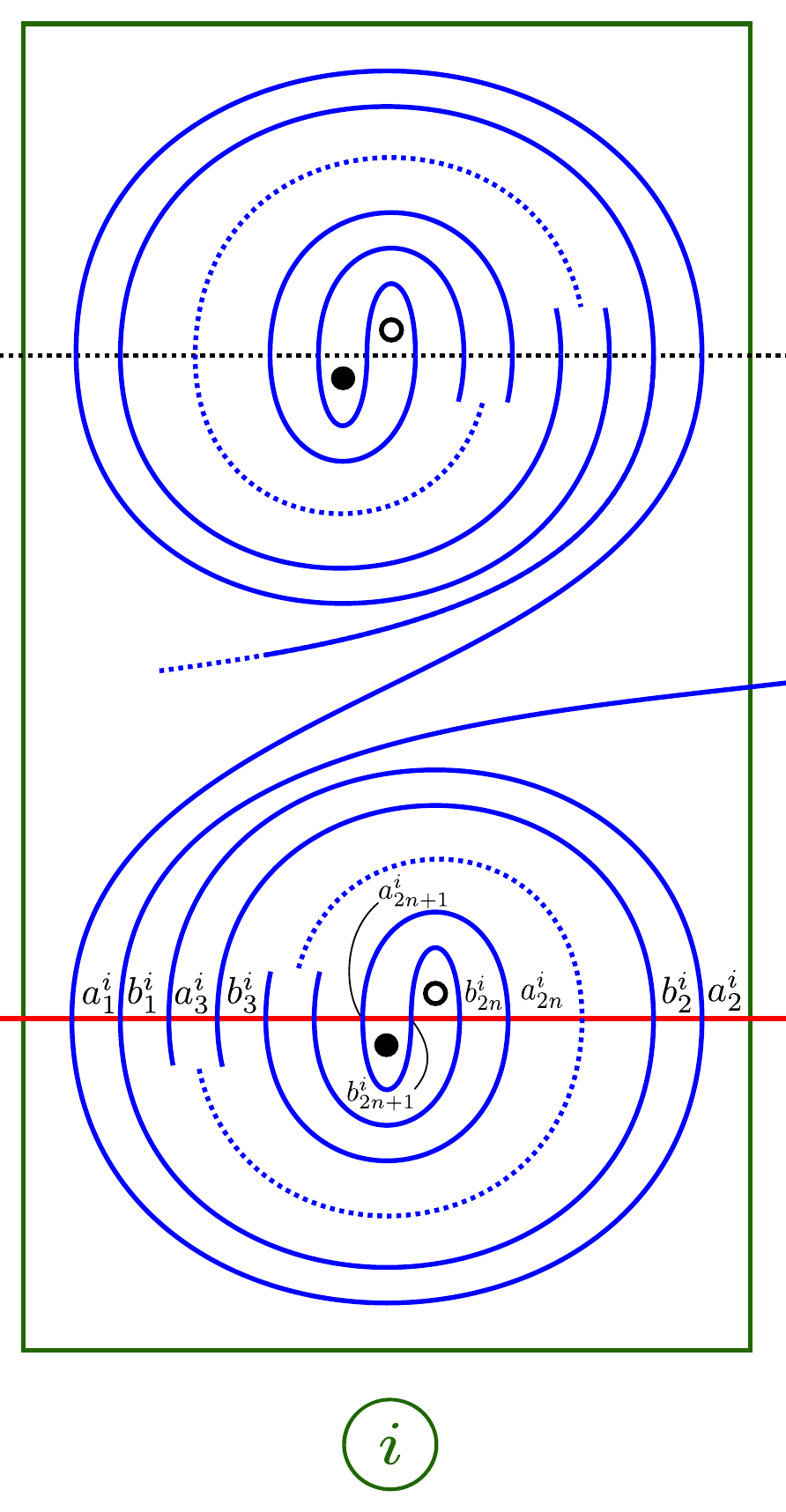}
&\includegraphics[scale=0.385]{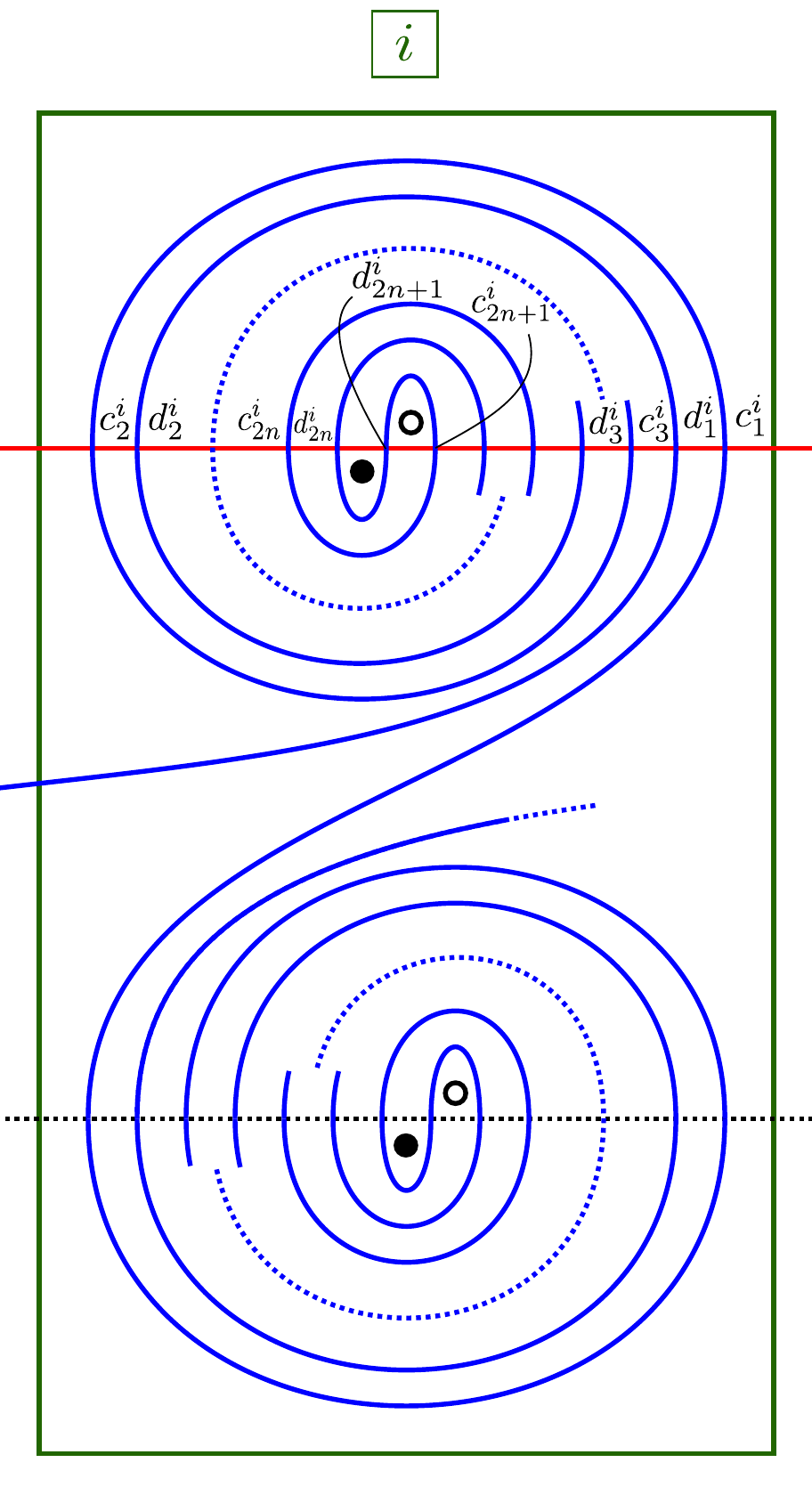}
\end{tabular}
\caption{Left (resp.  right) is the $i$-th (green) box from the middle to the left (resp. right) in Figure \ref{1,1universal-K_n3,3k+2}. }
\label{1,1universal-K_n3,3k+2-localleftright}
\end{figure}

\begin{figure}[h]
\centering
\includegraphics[scale=0.4]{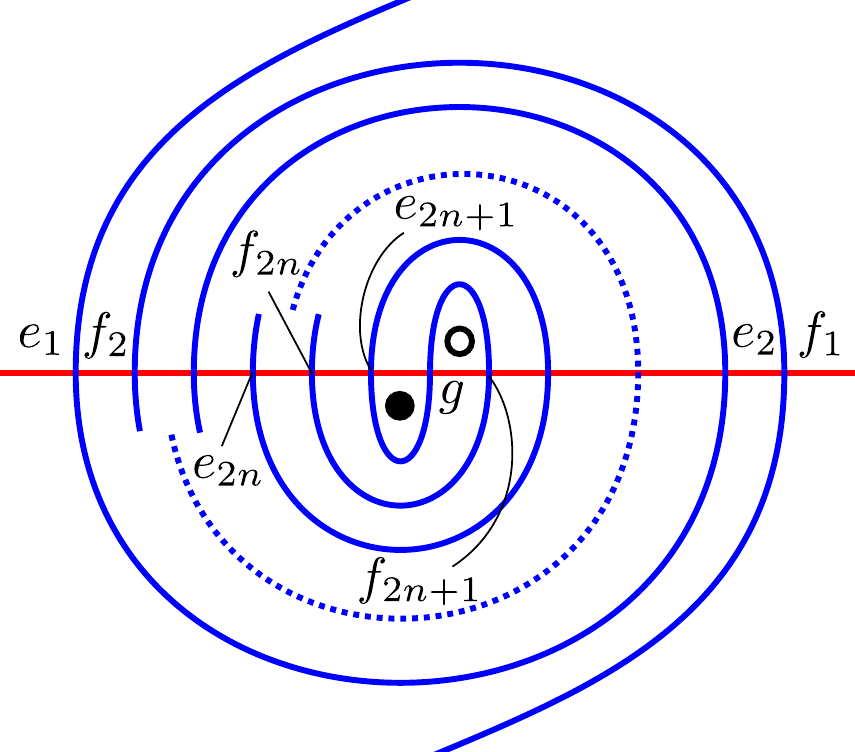}
\caption{The enlarged figure of the middle part in Figure \ref{1,1universal-K_n3,3k+2}. }
\label{1,1universal-K_n3,3k+2-localcenter}
\end{figure}

\begin{table}[h]
\hspace*{-2cm}
\begingroup
\renewcommand{\arraystretch}{1.3}
\begin{tabular}{|c|c|c||c|c|c|}
\hline
from & to & base point & from & to & base point\\
\hline
\multirow{2}{*}{$a^1_2$} & $a^1_1$ & $\bullet$ & \multirow{2}{*}{$c^1_2$} & $c^1_1$ & $\circ$ \\
& $e_1$ & $\circ\circ$ && $f_1$ & $\bullet\bullet$ \\ 
\hline
\multirow{2}{*}{$a^i_2\ (i=2,\ldots,k)$} & $a^1_1$ & $\bullet$ & \multirow{2}{*}{$c^i_2\ (i=2,\ldots,k)$} & $c^1_1$ & $\circ$ \\
& $a^{i-1}_1$ & $\circ\circ$ && $c^{i-1}_1$ & $\bullet\bullet$ \\ 
\hline
\multirow{2}{*}{\begin{tabular}{c}$a^i_{2j+1}$\\ $(i=1,\ldots,k,\ j=1,\ldots,n)$\end{tabular}} & $a^i_{2j}$ & $\circ$ & \multirow{2}{*}{\begin{tabular}{c}$c^i_{2j+1}$\\ $(i=1,\ldots,k,\ j=1,\ldots,n)$\end{tabular}} & $c^i_{2j}$ & $\bullet$\\
& $b^i_{2j-1}$ & $\circ$ && $d^i_{2j-1}$ & $\bullet$\\
\hline
\multirow{2}{*}{\begin{tabular}{c}$a^i_{2j}$\\$(i=1,\ldots,k,\ j=2,\ldots,n)$\end{tabular}} & $a^i_{2j-1}$ & $\bullet$ & \multirow{2}{*}{\begin{tabular}{c}$c^i_{2j}$\\$(i=1,\ldots,k,\ j=2,\ldots,n)$\end{tabular}} & $c^i_{2j-1}$ & $\circ$\\
& $b^i_{2j-2}$ & $\circ$ && $d^i_{2j-2}$ & $\bullet$\\
\hline
\multirow{2}{*}{$b^1_1$} & $a^1_1$ & $\bullet$ & \multirow{2}{*}{$d^1_1$} & $c^1_1$ & $\circ$ \\
& $e_1$ & $\circ\circ$ && $f_1$ & $\bullet\bullet$ \\ 
\hline
\multirow{2}{*}{$b^i_1\ (i=2,\ldots,k)$} & $a^i_1$ & $\bullet$ & \multirow{2}{*}{$d^i_1\ (i=1,\ldots,k)$} & $c^i_1$ & $\circ$ \\
& $a^{i-1}_1$ & $\circ\circ$ && $c^{i-1}_1$ & $\bullet\bullet$ \\ 
\hline
\multirow{2}{*}{\begin{tabular}{c}$b^i_{2j-1}$\\$(i=1,\ldots,k,\ j=2,\ldots,n+1)$\end{tabular}} & $a^i_{2j-1}$ & $\bullet$ & \multirow{2}{*}{\begin{tabular}{c}$c^i_{2j-1}$\\$(i=1,\ldots,k,\ j=2,\ldots,n+1)$\end{tabular}} & $c^i_{2j-1}$ & $\circ$\\
& $b^i_{2j-2}$ & $\circ$ && $d^i_{2j-2}$ & $\bullet$\\
\hline
\multirow{2}{*}{\begin{tabular}{c}$b^i_{2j}$\\$(i=1,\ldots,k,\ j=1,\ldots,n)$\end{tabular}} & $a^i_{2j}$ & $\bullet$ & \multirow{2}{*}{\begin{tabular}{c}$d^i_{2j}$\\$(i=1,\ldots,k,\ j=1,\ldots,n)$\end{tabular}} & $c^i_{2j}$ & $\circ$ \\
& $b^i_{2j-1}$ & $\bullet$ && $d^i_{2j-1}$ & $\circ$\\
\hline
\multirow{2}{*}{$f_{2j-1}\ (j=2,\ldots,n+1)$} & $f_{2j-2}$ & $\bullet$ & \multirow{2}{*}{$e_{2j-1}\ (j=2,\ldots,n+1)$} & $f_{2j-2}$ & $\circ$\\
& $e_{2j-2}$ & $\bullet$ && $e_{2j-2}$ & $\circ$\\
\hline 
\multirow{2}{*}{$f_{2j}\ (j=1,\ldots,n)$} & $f_{2j-1}$ & $\circ$ & \multirow{2}{*}{$e_{2j}\ (j=1,\ldots,n)$} & $f_{2j-1}$ & $\circ$\\
& $e_{2j-1}$ & $\bullet$ && $e_{2j-1}$ & $\bullet$\\
\hline
\multirow{2}{*}{$g$} & $f_{2n+1}$ & $\circ$ &&&\\
& $e_{2n+1}$ & $\bullet$ &&&\\
\hline
\end{tabular}
\endgroup

\vspace*{5mm}

\begingroup
\renewcommand{\arraystretch}{1.3}
\begin{tabular}{|c|c||c|c|}
\hline
Alexander grading & generators & Alexander grading & generators\\
\hline
$3k+1$ & $a^k_{{\rm odd}}$ & $-3k-1$ & $c^k_{{\rm odd}}$\\
\hline
$3k$ & $a^k_{{\rm even}},\ b^k_{{\rm odd}}$ & $-3k$ & $c^k_{{\rm even}},\ d^k_{{\rm odd}}$\\
\hline
$3k-1$ & $b^k_{{\rm even}}$ & $-3k+1$ & $d^k_{{\rm even}}$\\
\hline
$3k-2$ & $a^{k-1}_{{\rm odd}}$ & $-3k+2$ & $c^{k-1}_{{\rm odd}}$\\
\hline
$\vdots$ & $\vdots$ & $\vdots$ & $\vdots$\\
\hline
$2$ & $a^1_{{\rm even}}$ & $-2$ & $d^1_{{\rm even}}$\\
\hline
$1$ & $e_{{\rm odd}}$ & $-1$ & $f_{{\rm odd}}$\\
\hline
$0$ & $g,\ e_{{\rm even}},\ f_{{\rm even}}$ & & \\
\hline
\end{tabular}
\endgroup
\caption{(Top) The list of Whitney disks contributing to differentials of $\CFK^{\infty}(K_n^{(3,3k+2)})$. (Bottom) The Alexander gradings of the generators of $\CFK^{\infty}(K_n^{(3,3k+2)})$.} 
\label{differential-Alexander-K_n^{3,3k+2}}
\end{table}

By applying a change of basis: 
\begin{itemize}
\item $b^i_{2j-1}\longrightarrow b^i_{2j-1}+a^i_{2j}=: B^i_{2j-1}$ for $i=1,\ldots,k$ and $j=1,\ldots,n$,
\item $d^i_{2j-1}\longrightarrow d^i_{2j-1}+c^i_{2j}=: D^i_{2j-1}$ for $i=1,\ldots,k$ and $j=1,\ldots,n$,
\item $f_{2j}\longrightarrow f_{2j}+e_{2j}=: F_{2j}$ for $j=1,\ldots,n$,
\end{itemize}
we obtain $\CFK^{\infty}(K_n^{(3,3k+2)})$ as in Figure \ref{CFK-K_n3,3k+2-precise}.

\begin{figure}[h]
\centering
\begin{tabular}{c}
\includegraphics[scale=0.75]{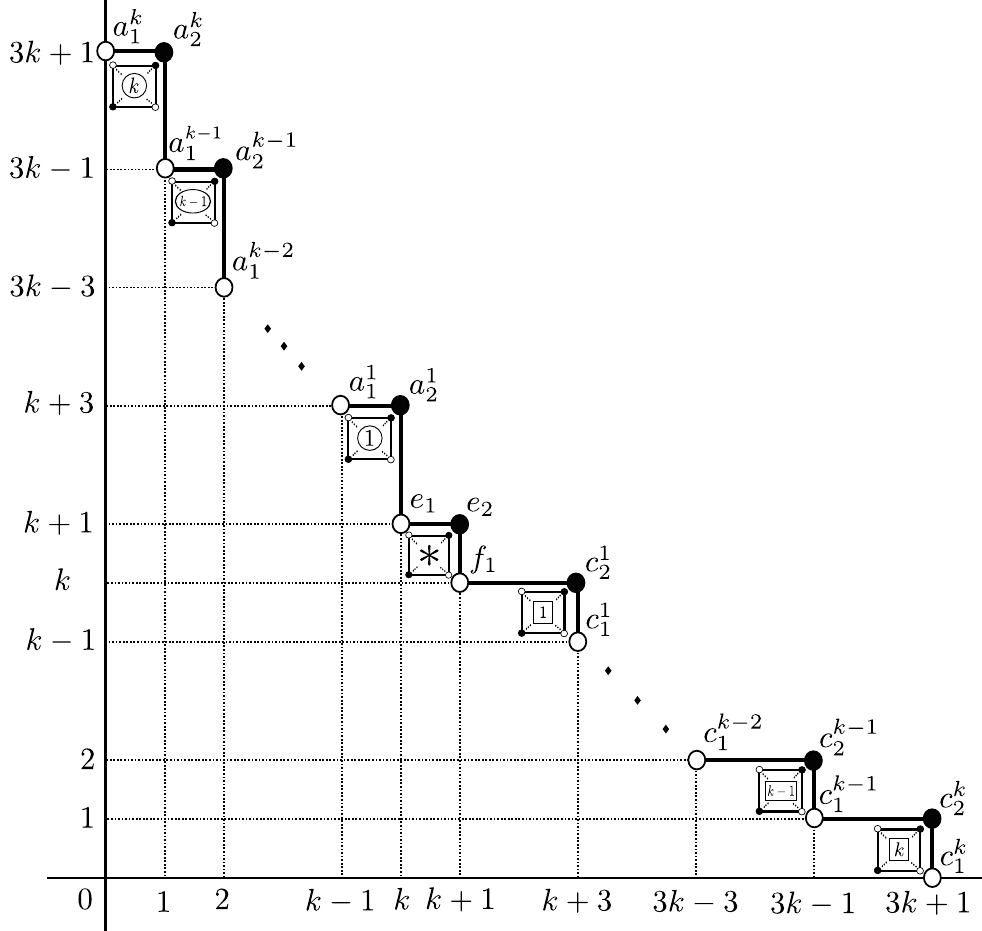}\\
\includegraphics[scale=0.4]{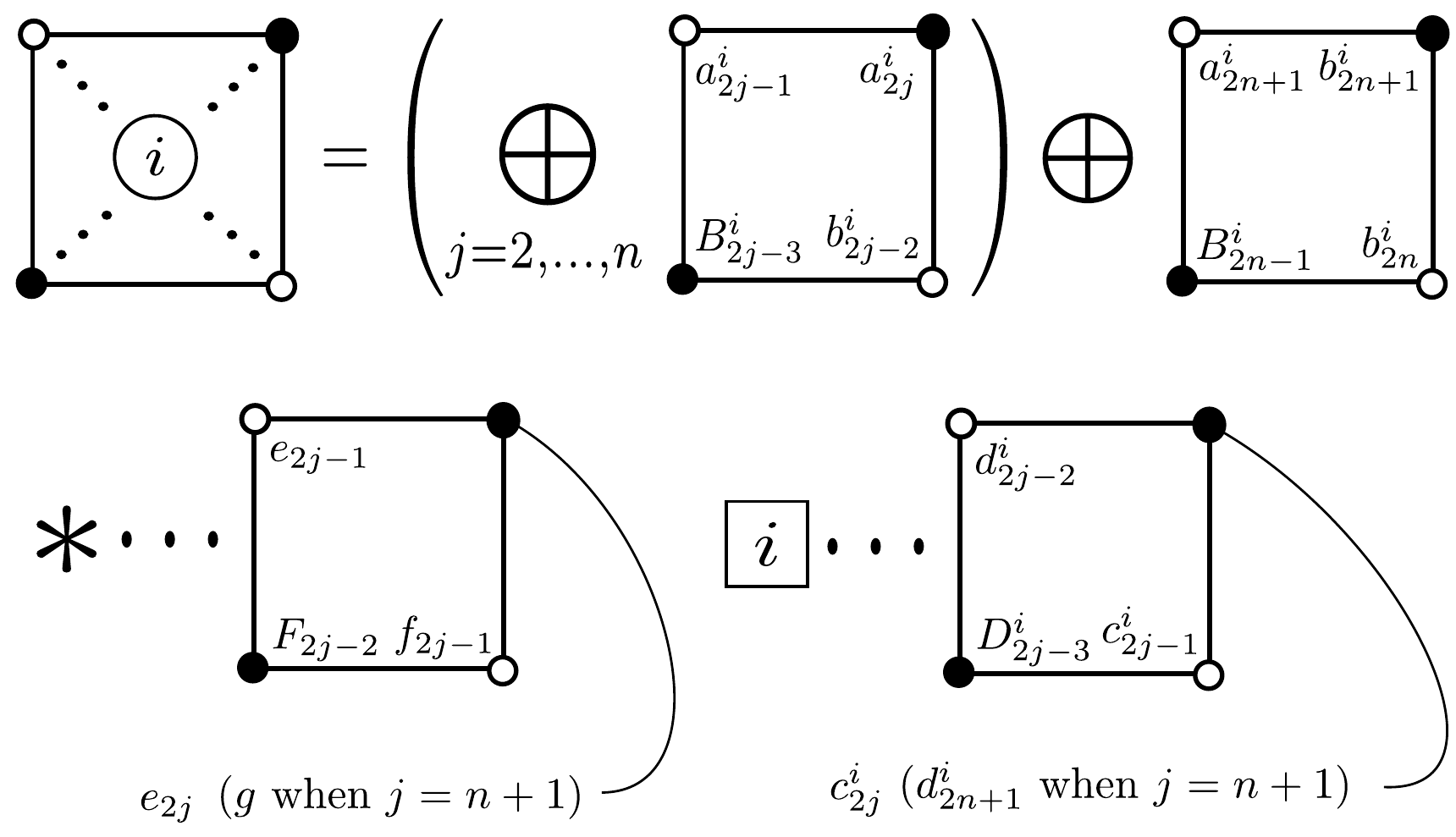}
\end{tabular}
\caption{The full knot Floer complex $\CFK^{\infty}(K_n^{(3,3k+2)})$. The box complex with \textcircled{\scriptsize $i$} represents the direct sum of $n$ box complexes as shown in the figure. The box with $*$ or a boxed number similarly represents the direct sum of $n$ box complexes (where $j$ runs $2,\ldots,n+1$). }
\label{CFK-K_n3,3k+2-precise}
\end{figure}


\end{document}